\documentclass[11pt,twoside,a4paper,reqno]{amsart}
\usepackage[utf8]{inputenc}
\usepackage{amsmath}
\usepackage{amssymb}
\usepackage{amsthm}
\usepackage[english]{babel}
\usepackage{tikz-cd}
\usepackage{url}
\usepackage{enumerate}
\usepackage[normalem]{ulem}
\newtheorem{theorem}{Theorem}
\newtheorem{corollary}[theorem]{Corollary}
\newtheorem{definition}[theorem]{Definition}

\newtheorem{lemma}[theorem]{Lemma}
\newtheorem{remark}[theorem]{Remark}
\newtheorem{proposition}[theorem]{Proposition}

\newcommand{\supp}{\text{supp}}
\newcommand{\as}[1]{\textit{as } #1 \rightarrow\infty}
\renewcommand{\d}{\, \text{d}}
\newcommand{\s}[2]{(#1_#2)_{#2 \geq 0}}
\newcommand{\fceil}[2]{\left \lceil \frac{#1}{#2}  \right \rceil}
\newcommand{\ffloor}[2]{\left \lfloor \frac{#1}{#2}  \right \rfloor}

\usepackage{xcolor}
\newcommand{\F}{\mathcal{F}}
\newcommand{\R}{\mathbb{R}}

\newcommand{\C}{\mathbb{C}}
\newcommand{\N}{\mathbb{N}}
\newcommand{\Z}{\mathbb{Z}}
\newcommand{\T}{\mathbb{T}}
\newcommand{\B}{\mathcal{B}}

\usepackage{parskip}
\usepackage[hang,flushmargin]{footmisc}
\usepackage{graphicx}

\begin{document}
\title{Weak mixing and sparse equidistribution}
\author{M. Auer}\thanks{The author thanks the University of Maryland for their hospitality during this work. Special thanks go to my advisor  A. Kanigowski for suggesting this problem and providing helpful comments throughout. Thanks also go to D. Dolgopyat, for supplying his knowledge on many of the examples.}
\date{}
\maketitle
\makeatletter{\renewcommand*{\@makefnmark}{}

\footnotetext{\textit{2020 Mathematics} 37A10, 37A25, 37A44, 37A46, 11L07.\\
\textit{Key words} ergodic theory, weak mixing, equidistribution, sparse sequence, group action, exponential sum}\makeatother}

\begingroup
\leftskip4em
\rightskip\leftskip
\begin{small}
\paragraph*{Abstract}
The celebrated Birkhoff Ergodic Theorem asserts that, for an ergodic map, orbits of almost every point equidistributes when sampled at integer times. This result was generalized by Bourgain to many natural sparse subsets of the integers. On the other hand, our understanding of the behaviour of orbits of \textbf{all} points in a dynamical system is much less understood, especially for sparse subsets of the integers.\\
We generalize a method introduced by A. Venkatesh to tackle this problem in two directions, general $\R^d$ actions instead of flows, and weak mixing, rather than mixing, actions. Along the way, we also establish some basic properties of weak mixing and show weak mixing for the time 1-map of a weak mixing flow.

\end{small}
\par
\endgroup

\part{Results}

\section{Introduction and Notation}

One of the most basic and important results in ergodic theory is the Birkhoff Ergodic Theorem, which states that if $\phi$ is a measure-preserving ergodic map on a state space $(X,\mu)$, then for almost every $x\in X$ the orbit of $x$ equidistributes, i.e. for every continuous $f$ (in fact all $f\in L^1$) and almost every $x\in X$ it holds that
\begin{equation}\label{ergsum}
\frac{1}{N} \sum_{n=0}^{N-1} f(\phi_n(x)) \rightarrow \int_X f \d\mu \;\;\;\as{n}.
\end{equation}
The statement remains true if we replace pointwise convergence by\footnote{Unless there is a risk of confusion, we will not specify the underlying space when writing $L^2$ (or $L^1$).} $L^2$ convergence.

This theorem can be generalized in several directions. One of the most interesting (and challenging) is whether one can replace $\N$ by different sequences, i.e. for a given sequence $\s{b}{n}\subset \N$, when does 
\begin{equation}\label{sparsesum}
\frac{1}{N} \sum_{n=0}^{N-1} f(\phi_{b_n}(x))
\end{equation} 
converge? This question also makes sense if $\s{b}{n}$ is a sequence of not necessarily integers and $\phi$ is a flow.

If one asks for $L^2$ asymptotics, instead of pointwise a.e convergence, much more is known. One can often use spectral methods to study such sparse limit theorems. Classical examples include $b_n=n^2$ or $b_n=p_n$ is the $n$th prime. Pointwise almost everywhere statements are also known. A comprehensive discussion of this topic can be found in \cite{ergharmRW}.

Instead of changing the sequence one sums over, we can alternatively 
put $\{0,1\}$ weights into the ergodic sum \eqref{ergsum}. 
Namely, let $B\!\!=\!\!\{b_n\}$ and $\theta_n\!\!=\!\! 1_B(n)$, then \eqref{sparsesum} is equal to
\begin{equation}\label{weightsum}
\frac{1}{N} \sum_{n=0}^{b_N-1} \theta_n f(\phi_{n}(x)).
\end{equation}
The sums in \eqref{weightsum} are of interest even when $\theta_n$ is not necessarily $\{0,1\}$-valued. For example, $\theta_n=\mu_n$ the Möbius function is of interest for questions on Möbius disjointness.

In the article \cite{venkateshtwist}, A. Venkatesh investigated convergence of \eqref{sparsesum} for \textbf{all} $x$ and $b_n=n^{1+\epsilon}$ when $\phi$ is a flow, thereby giving a partial answer to a conjecture by N. Shah. and G. Margulis.\\ His method relied on polynomial mixing and polynomial equidistribution of the flow. We give a generalisation of the method to $\R^d$ or $\Z^d$ actions.

In addition to proving convergence of \eqref{sparsesum} or \eqref{weightsum} (in a higher dimensional setting), we will also provide estimates on the speed of convergence. This will lead to somewhat cumbersome formulas, so we introduce the following notation for
the ease of reading.

For $A,B\in \R$ we write 
\begin{equation*}
A\ll B \;\; \iff \;\; \exists C>0 \; \textit{ such that } \; A \leq CB,
\end{equation*}
the constant $C>0$. We write $A\ll_{f,g,h} B$ if the implicit constant is allowed to depend on the quantities $f,g,h$. We write $A\sim B$ if $A\ll B$ and $B\ll A$.

In the following, $\phi$ will denote a continuous group action of a group $G$ on a 
metric space $(X,d)$. Unless explicitly stated, we will suppress the dependence on $\phi, G$ and $X$ whenever we write $A\ll B$.

For a vector $\textbf{u}=(u_1,...u_d)\in \R^d$ and $p\geq 1$ we write
\begin{equation*}
|\textbf{u}|^p_p=\sum_{i=1}^d |u_i|^p \; \textit{ and } \; |\textbf{u}|_{\infty}=\max_{j=1,...,d} |u_j|.
\end{equation*}
For $x\in\C$ denote $e(x)=e^{2\pi i x}$.

The main objects of study in this paper will be weak mixing, twisted integrals (both in $L^2$ and pointwise) and pointwise ergodic averages. 

Below, we shall present results of either qualitative or quantitative nature (we present the case of polynomial bounds). For this reason, we shall introduce the objects for the largest class of functions where they make sense ($L^1$ resp. $L^2$). However, quantitative statements (decay rates) are sensitive to smoothness ($C^1$, Hölder, Sobolev, etc.), hence whenever we talk about rates of decay, we have to restrict ourselves to a certain subspace (or subalgebra) of $L^1$ resp. $L^2$. Let us make this precise. 
Let $\B\subset L^2\cap L^1$ be a Banach subalgebra of $L^2\cap L^1$, let's say to $(\B,||\cdot||_{\B})$. We will say that $\B$ satisfies condition (N) if
\begin{itemize}
\item[(N1)] $(\B,||\cdot||)\subset (C^{\rho}, ||\cdot||_{C^{\rho}})$ is a subalgebra of the space of H\"older functions\footnote{Note that for any subalgebra $\B'\subset L^2\cap L^1$ one can choose $\B=\B'\cap C^{\rho}$ and $||\cdot||_{\B}=\max(||\cdot||_{\B'},||\cdot||_{C^{\rho}})$.} for some $\rho>0$, i.e. $\B\subset C^{\rho}$ and for each $f\in \B$ we have $||f||_{C^{\rho}}\ll ||f||_{\B}$,
\item[(N2)] and $\phi$ grows $||\cdot||_{\B}$ only polynomially, i.e. there is a $\mathcal{K}>0$
 such that
\begin{equation}\label{PolyNormGrowth}
||f\circ \phi_{\textbf{t}}||_{\B} \ll ||f||_{\B} |\textbf{t}|^{\mathcal{K}}.
\end{equation}
\end{itemize}

Let us fix some notation, the definitions below are standard and can be found in any book on ergodic theory.

A continuous action $\phi$ of $G=\R^d$ (or $\Z^d$) is called \textit{weak mixing} (w.r.t $\mu$), for some invariant measure $\mu$ on $X$, if, for any $f,g\in L^2_0 = \{f\in L^2(X) \;|\; \int_X f \d\mu=0\}$, we have
\begin{equation*}
\frac{1}{(2T)^d} \int_{[-T,T]^d} | \langle f,g(\phi_{\textbf{t}}) \rangle | \d m(\textbf{t}) \rightarrow 0 \;\;\;\as{T},
\end{equation*} 
where $m$ is the $d$-dimensional Lebesgue-measure (counting measure on $\Z^d$) and
\begin{equation*}
\langle f,g \rangle=\int_X f(x) \overline{g(x)} \d\mu(x).
\end{equation*}

The rates of weak mixing $\alpha^{\phi}=(\alpha^{\phi})_{f,g})$ are given as
\begin{equation}\label{wmratesdef}
\frac{1}{(2T)^d} \int_{[-T,T]^d} | \langle f,g(\phi_{\textbf{t}}) \rangle | \d m(\textbf{t}) = \alpha^{\phi}_{f,g}(T) \;\;\;\forall T>0.
\end{equation} 
If $\alpha^{\phi}_{f,g}(T)\ll ||f||_{\B} ||g||_{\B} T^{-\delta}$ for some $\delta>0$ and $f,g\in L^2_0\cap \B$, then we say that $\phi$ is \textit{polynomially weak mixing}. 

We say that $\phi$ has \textit{decaying ergodic averages} (w.r.t $\mu$) at $x\in X$ and for the function $f\in L^1$ if, 
\begin{equation*}
\left| \frac{1}{(2T)^d} \int_{[-T,T]^d} f(\phi_{\textbf{t}}(x)) \d m(\textbf{t}) - \int_X f \d\mu \right| \rightarrow 0 \;\;\;\as{T}.
\end{equation*}
The rates of decay $\beta^{\phi}=(\beta^{\phi}_{f,x})$ are defined as
\begin{equation}\label{ergratesdef}
\left|\frac{1}{(2T)^d} \int_{[-T,T]^d} f(\phi_{\textbf{t}}(x)) \d m(\textbf{t}) - \int_X f \d\mu\right| = \beta^{\phi}_{f,x}(T).
\end{equation}
If $\beta^{\phi}_{f,x}(T)\ll ||f||_{\B} T^{-\delta}$ for some $\delta>0$ and $f\in L^1 \cap \B$, then we say that $\phi$ has \textit{polynomially decaying ergodic averages} (w.r.t $\mu$) at $x$. If $\phi$ has polynomially decaying ergodic averages (w.r.t $\mu$) at every $x$, then we say that $\phi$ is \textit{polynomially ergodic}. \\
If $\beta^{\phi}_{f,x}(T)\rightarrow 0$ as $T\rightarrow\infty$ for all uniformly\footnote{$\mu$-generic points are more commonly defined as points $x$ such that $\beta^{\phi}_{f,x}(T)\rightarrow 0$ for all continuous (not necessarily uniformly) and bounded $f$. However, these two definitions are equivalent;

If $\beta^{\phi}_{f,x}(T)\rightarrow 0$ as $T\rightarrow\infty$ for $f$ uniformly continuous and bounded, then, by the Portmanteau Theorem (see e.g \cite{elstrodt2013maß}), it holds that $\mu_T \Rightarrow \mu$, where $\mu_n=\frac{1}{(2T)^d} \int_{[-T, T]^d} \delta_{\phi_{\textbf{t}}(x)} \d m(\textbf{t})$. Here $\Rightarrow$ denotes weak convergence. But then it follows that also $\beta^{\phi}_{f,x}(T)\rightarrow 0$ as $T\rightarrow\infty$ for functions that are (not necessarily uniformly) continuous and bounded.} continuous and bounded $f$, then $x$ is said to be $\mu$-generic w.r.t $\phi$. The basin of $\mu$ w.r.t $\phi$ is the collection of all such points and shall be denoted by $B_{\phi}(\mu)$.

We say that $\phi$ has \textit{decaying twisted integrals} at $x\in X$ if, for all $f\in L^2_0$ and $\textbf{a}\in \R^d$ (or $\textbf{a}\in \T^d \tilde{=} [0,1)^d$), we have
\begin{equation}\label{twistdefpoint}
\left|\frac{1}{(2T)^d} \int_{[-T,T]^d} f(\phi_{\textbf{t}}(x)) e( \langle \textbf{a},\textbf{t}\rangle) \d m(\textbf{t})\right| \rightarrow 0 \;\;\;\as{T}.
\end{equation}
The rates of decay $\gamma^{\phi}=(\gamma^{\phi}_{f,x})$ are defined as
\begin{equation}\label{twistratesdef}
\sup_{\textbf{a}\in \R^d} \left|\frac{1}{(2T)^d} \int_{[-T,T]^d} f(\phi_{\textbf{t}}(x)) e( \langle \textbf{a},\textbf{t}\rangle) \d m(\textbf{t})\right| = \gamma^{\phi}_{f,x}(T) \;\;\;\forall T>0.
\end{equation} 
If $\gamma^{\phi}_{f,x}(T)\ll ||f||_{\B} T^{-\delta}$ for some $\delta>0$ and $f\in L^2_0 \cap \B$, then we say that $\phi$ has \textit{polynomially decaying twisted integrals} at $x$.
Analogous definitions and notation will be adapted for $L^2$ convergence, i.e
\begin{equation}\label{twistdefl2}
\sup_{\textbf{a}\in \R^d} \left|\left|\frac{1}{(2T)^d} \int_{[-T,T]^d} f(\phi_{\textbf{t}}(x)) e( \langle \textbf{a},\textbf{t}) \d m(\textbf{t})\right|\right|_{L^2(X)} = \gamma^{\phi}_{f}(T) \;\;\;\forall T>0.
\end{equation}
If there is no risk of confusion, which transformation is meant, we will drop the $\phi$ from the notation, i.e we will write $\alpha=\alpha^{\phi}, \beta=\beta^{\phi}$ and $\gamma=\gamma^{\phi}$.

\begin{remark}\label{rem1}
In the above definitions, maybe it is more familiar to consider one-sided, instead of two-sided, integrals e.g
\begin{equation*}
\left|\frac{1}{T^d} \int_{[0,T]^d} f(\phi_{\textbf{t}}(x)) e( \langle \textbf{a},\textbf{t}) \d m(\textbf{t})\right|.
\end{equation*}
For twisted integrals it makes no difference; we can replace $\textbf{a}$ by $\textbf{a}+(T, T,\dots, T)$. Also, weak mixing is equivalent to the decay of twisted integrals in $L^2$.

We prefer to work with two-sided integrals because in the proof of Proposition \ref{vprop1} a two-sided term will appear even if we start with one-sided. 
\end{remark}

In the following, $\alpha$ and $\beta$ will always refer to the rates defined in \eqref{wmratesdef} and \eqref{ergratesdef}, $\gamma$ will refer to the rates defined in \eqref{twistratesdef} or the corresponding rates in \eqref{twistdefl2}. Whether $m$ refers to the Lebesgue measure on $\R^d$ or counting measure on $\Z^d$ will be clear from the context, whenever there is a risk of confusion we will specify. 

Twisted integrals in $L^2$ are not all that exciting, it is well known that decay in $L^2$ is equivalent to weak mixing, and there is an explicit relation\footnote{See Proposition \ref{wmtwistprop1} and Lemmas \ref{twistspeclem} and \ref{specwmlem}.} between the rates of decay. More explicitly:

\begin{theorem}\label{wmtwistthm}
The action $\phi$ is weakly mixing if and only if twisted integrals decay in $L^2$. Furthermore, the speed of weak mixing is polynomial in some $\B$ if and only if the decay rate for twisted integrals in $L^2$ is.
\end{theorem}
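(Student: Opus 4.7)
The plan is to reduce both halves of the theorem to the spectral theorem for the Koopman representation of $G$ on $L^2(X,\mu)$. For $f\in L^2_0$, Bochner's theorem produces a positive spectral measure $\sigma_f$ on $\R^d$ (resp.\ $\T^d$) satisfying $c_f(\textbf{t}):=\langle f, f\circ \phi_{\textbf{t}}\rangle = \int e(\langle \textbf{b},\textbf{t}\rangle) \d\sigma_f(\textbf{b})$. A direct Fubini computation then yields the spectral identity
\begin{equation*}
\left\|\frac{1}{(2T)^d}\int_{[-T,T]^d} f\circ\phi_{\textbf{t}}\cdot e(\langle \textbf{a},\textbf{t}\rangle) \d m(\textbf{t})\right\|_{L^2}^{2} = \int K_T(\textbf{a}+\textbf{b}) \d\sigma_f(\textbf{b}),
\end{equation*}
where $K_T(\textbf{y})=\prod_{j=1}^d \bigl(\sin(2\pi T y_j)/(2\pi T y_j)\bigr)^{2}$ is a Fej\'er-type kernel. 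This is the content of the preparatory Lemma \ref{twistspeclem}.

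For the qualitative half, I observe that $K_T\leq 1$ and $K_T(\textbf{y})\to \mathbf{1}_{\{\textbf{y}=\textbf{0}\}}$ pointwise, so dominated convergence gives $\int K_T(\textbf{a}+\textbf{b}) \d\sigma_f(\textbf{b})\to\sigma_f(\{-\textbf{a}\})$. Hence twisted integrals decay in $L^2$ uniformly in $\textbf{a}$ iff $\sigma_f$ is continuous, and by the classical spectral criterion---proved by polarization and passage to the product system (Lemma \ref{specwmlem})---this is equivalent to weak mixing of $\phi$.

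For the quantitative forward direction, I would expand the squared twist norm via $c_f$ and change variables $\textbf{r}=\textbf{t}-\textbf{s}$ inside $[-T,T]^{2d}$, obtaining $\gamma_f(T)^2\leq 2^d\alpha_{f,f}(2T)$ (a factor of $2$ loss in the exponent). For the reverse direction, Cauchy--Schwarz gives $\alpha_{f,f}(T)^2\leq \frac{1}{(2T)^d}\int_{[-T,T]^d}|c_f|^2 \d m$, and I would invoke the Fourier duality between $K_T$ and the triangle function $\Lambda_T(\textbf{u})=(2T)^{-2d}\prod_j(2T-|u_j|)^+$, which satisfies $\Lambda_T\geq 2^{-d}(2T)^{-d}$ on $[-T,T]^d$. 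This yields
\begin{equation*}
\frac{1}{(2T)^d}\int_{[-T,T]^d}|c_f|^2 \d m \leq 2^d\iint K_T(\textbf{b}-\textbf{b}') \d\sigma_f(\textbf{b})\d\sigma_f(\textbf{b}') \leq 2^d\gamma_f(T)^2\|f\|_2^2,
\end{equation*}
the last inequality coming from the spectral identity applied pointwise in $\textbf{b}'$; hence $\alpha_{f,f}(T)\ll \gamma_f(T)\|f\|_{\B}$, preserving the polynomial exponent. These quantitative comparisons, together with standard polarization to pass from $(f,f)$ to general pairs $(f,g)$, constitute Proposition \ref{wmtwistprop1}. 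The main obstacle lies in this reverse step, where the non-negativity of $K_T$ (a \emph{squared} sinc rather than a plain one) is essential for the Fourier-duality bound to give the desired one-sided estimate on $\iint K_T \d\sigma_f \d\sigma_f$.
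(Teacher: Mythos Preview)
Your argument is correct, and the forward direction (weak mixing implies $L^2$ decay of twisted integrals, via expanding the square and the change of variables $\textbf{r}=\textbf{t}-\textbf{s}$) is exactly the paper's Proposition~\ref{wmtwistprop1}.

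The reverse direction, however, takes a genuinely different route from the paper. The paper proceeds in two steps: Lemma~\ref{twistspeclem} shows that $\gamma_f(T)^2$ controls the $\sigma_f$--mass of cubes of side $1/T$, and Lemma~\ref{specwmlem} is a Wiener--type estimate bounding $\alpha_{f,f}(T)$ in terms of this local concentration function plus a remainder $\|f\|_{L^2}T^{-1/2}\epsilon^{-1/2}$; one then optimises over $\epsilon$. Your approach bypasses both steps: using the positivity of the Fej\'er kernel and the identity $\widehat{\Lambda_T}=K_T$, you obtain directly
\[
\alpha_{f,f}(T)^2 \;\leq\; \frac{1}{(2T)^d}\int_{[-T,T]^d}|c_f|^2 \;\leq\; 2^d\iint K_T(\textbf{b}-\textbf{b}')\,\d\sigma_f\,\d\sigma_f \;\leq\; 2^d\,\gamma_f(T)^2\,\|f\|_{L^2}^2,
\]
which preserves the polynomial exponent with no loss, whereas the paper's $\epsilon$--balancing costs a little. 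On the other hand, the paper's route isolates the local spectral decay $\sup_{\textbf{a}}\sigma_f(-\textbf{a}+[-1/T,1/T]^d)$ as an intermediate quantity, which is of independent interest. Two minor points: your descriptions of what Lemma~\ref{twistspeclem}, Lemma~\ref{specwmlem}, and Proposition~\ref{wmtwistprop1} contain do not match the paper (understandable for a blind attempt); and in the qualitative half, the passage from ``$\sigma_f$ atomless'' to ``$\int K_T(\textbf{a}+\cdot)\,\d\sigma_f\to 0$ \emph{uniformly} in $\textbf{a}$'' needs one more sentence---e.g.\ the bound $\int K_T(\textbf{a}+\cdot)\,\d\sigma_f\le \sigma_f(-\textbf{a}+[-\delta,\delta]^d)+C(T\delta)^{-2}\|f\|_{L^2}^2$ together with the fact that a finite atomless measure has $\sup_{\textbf{a}}\sigma_f(-\textbf{a}+[-\delta,\delta]^d)\to 0$.
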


This relation is well known. For completeness’ sake, we shall present a proof in \S \ref{mixtwistsec}.

\section{The restricted action (time 1 map)}

For this section, $G=\R^d$ and $m$ is the $d$-dimensional Lebesgue
measure. The \textit{restricted action} $\phi^{(\textbf{1})}$ is a $\Z^d$ action given by restriction of $\phi$, i.e. $\phi^{(\textbf{1})}_{\textbf{t}}=\phi_{\textbf{t}}$ for $\textbf{t}\in\Z^d$. If $d=1$, then $\phi^{(\textbf{1})}$ is the time-1-map of the flow. Clearly if $\phi$ preserves a measure $\mu$ then so does $\phi^{(\textbf{1})}$, however ergodicity is not preserved under restriction\footnote{Consider $\phi:\T^d \rightarrow \T^d$, given by $\phi_{\textbf{t}}(\textbf{x})=\textbf{x}+\textbf{t}$. Clearly, $\phi$ is (uniquely) ergodic for the Lebesgue measure, but $\phi^{(\textbf{1})}=Id$.}. 

On the other hand, it is clear that if $\phi$ is mixing, then so is $\phi^{(\textbf{1})}$. The same is true for weak mixing; 

Indeed, if $f$ is an eigenfunction for $\phi$, then it is also an eigenfunction for $\phi^{(\textbf{1})}$. Hence, if $\phi^{(\textbf{1})}$ has no non-zero eigenfunctions, then neither does $\phi$.

The converse is also true. Suppose $\phi^{\textbf{1}}$ has a non-zero eigenfunction $f\in L^2_0$, say $f\circ \phi_{\textbf{n}}=e(\langle \textbf{a}, \textbf{n} \rangle) f$ for some $\textbf{a}\in [0,1)^d$ and $\textbf{n}\in \Z^d$. Let $g$ be continuous with $\langle g, f \rangle\not=0$, then\footnote{Note that $\langle g, f \circ \phi_{\textbf{t}} \rangle$ is a continuous function in $\textbf{t}$, even though $f$ is not continuous. To see this use, invariance of the measure
\begin{align*}
|\langle & g, f \rangle - \langle g, f \circ \phi_{\textbf{t}} \rangle| = | \langle g - g\circ \phi_{-\textbf{t}}, f \rangle|\\
&\leq ||g-g\circ \phi_{-\textbf{t}}||_{L^2(X)} ||f||_{L^2(X)} \rightarrow 0\;\;\;\as{\textbf{t}}.
\end{align*}
It follows that $\langle g, f \circ \phi_{\textbf{t}} \rangle \not = 0$ for small $\textbf{t}$.}
\begin{equation*}
\int_{[0,1]^d} |\langle g, f\circ \phi_{\textbf{t}} \rangle | \d \textbf{t} >0,
\end{equation*}
it follows that
\begin{equation*}
\frac{1}{(2T)^d} \int_{[-T,T]^d} |\langle g, f\circ \phi_{\textbf{t}} \rangle | \d \textbf{t} =  \int_{[0,1]^d} |\langle g, f\circ \phi_{\textbf{t}} \rangle | \d \textbf{t} \not\rightarrow 0,
\end{equation*}
and $\phi$ is not weak mixing.

We will provide a quantitative version below in \S \ref{restrictsec}.

\begin{theorem}\label{wmtime1thm}
The action $\phi$ is weak mixing if and only if the restricted action $\phi^{(\textbf{1})}$ is. Furthermore, if $\phi$ is H\"older continuous, and assumption (N) is satisfied, then $\phi$ is polynomially weak mixing if and only if $\phi^{(\textbf{1})}$ is.
\end{theorem}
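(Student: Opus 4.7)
The qualitative equivalence has essentially been established in the discussion preceding the theorem, so my focus is on the quantitative part. Throughout I fix $f, g \in L^2_0 \cap \B$ and write $H(\textbf{t}) = \langle f, g \circ \phi_{\textbf{t}} \rangle$; the strategy is to convert between the continuous average $(2T)^{-d} \int_{[-T,T]^d} |H(\textbf{t})| \d\textbf{t}$ (governing $\alpha^{\phi}_{f,g}(T)$) and the discrete average over $\Z^d \cap [-T,T]^d$ (governing $\alpha^{\phi^{(\textbf{1})}}_{f,g}(T)$).

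For the direction ``$\phi^{(\textbf{1})}$ polynomially weak mixing $\Rightarrow$ $\phi$ polynomially weak mixing'', the plan is a direct Fubini decomposition. Partition $[-T,T]^d$ into unit cubes $\textbf{n}+[0,1]^d$ for $\textbf{n}\in \Z^d\cap [-T,T-1]^d$, and use the cocycle identity $H(\textbf{n}+\textbf{s}) = \langle f, g_{\textbf{s}} \circ \phi_{\textbf{n}} \rangle$ with $g_{\textbf{s}} := g \circ \phi_{\textbf{s}}$. Since $\phi_{\textbf{s}}$ preserves $\mu$, we have $g_{\textbf{s}}\in L^2_0$, and by (N2) $\|g_{\textbf{s}}\|_{\B}\ll \|g\|_{\B}$ uniformly for $\textbf{s}\in[0,1]^d$. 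Applying the polynomial weak mixing hypothesis of $\phi^{(\textbf{1})}$ to each pair $(f,g_{\textbf{s}})$ and integrating in $\textbf{s}$ yields
\begin{equation*}
\alpha^{\phi}_{f,g}(T) \ll \int_{[0,1]^d} \alpha^{\phi^{(\textbf{1})}}_{f,g_{\textbf{s}}}(T)\d\textbf{s} \ll \|f\|_{\B}\|g\|_{\B} T^{-\delta},
\end{equation*}
with the same exponent $\delta$.

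For the converse direction, the task is to control the value $H(\textbf{n})$ by a local average of $H$ at scale $\epsilon$, and this is where Hölder continuity enters. Using Cauchy--Schwarz, measure invariance, and Hölder continuity of $\phi$ in its time parameter combined with (N1) (so that $g\in C^{\rho}$ with $\|g\|_{C^{\rho}}\ll \|g\|_{\B}$), I obtain
\begin{equation*}
|H(\textbf{t}) - H(\textbf{t}')| \leq \|f\|_{L^2}\|g\circ \phi_{\textbf{t}} - g\circ\phi_{\textbf{t}'}\|_{L^2} \ll \|f\|_{\B}\|g\|_{\B}|\textbf{t}-\textbf{t}'|^{\sigma}
\end{equation*}
for some $\sigma>0$. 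Averaging the bound $|H(\textbf{n})| \leq \epsilon^{-d}\int_{\textbf{n}+[0,\epsilon]^d}|H(\textbf{t})|\d\textbf{t} + O(\|f\|_{\B}\|g\|_{\B}\epsilon^{\sigma})$ over $\textbf{n}\in\Z^d\cap[-T,T]^d$ and exploiting the disjointness of the translated cubes for $\epsilon\leq 1$ gives
\begin{equation*}
\alpha^{\phi^{(\textbf{1})}}_{f,g}(T) \ll \epsilon^{-d}\alpha^{\phi}_{f,g}(T+\epsilon) + \|f\|_{\B}\|g\|_{\B}\epsilon^{\sigma} \ll \|f\|_{\B}\|g\|_{\B}\bigl(\epsilon^{-d} T^{-\delta} + \epsilon^{\sigma}\bigr),
\end{equation*}
and optimizing at $\epsilon = T^{-\delta/(d+\sigma)}$ produces a polynomial decay rate of $\delta\sigma/(d+\sigma)$. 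The main obstacle I anticipate is precisely the tracking of the Hölder modulus of $H$: one needs both (N1) to convert the $\B$-norm into a Hölder norm for $f,g$ and honest Hölder continuity of the action in its time parameter (continuity alone is not enough to obtain a polynomial rate in this step). The fact that the rate worsens from $\delta$ to $\delta\sigma/(d+\sigma)$ is harmless, since the statement only asserts existence of \emph{some} polynomial rate on each side.
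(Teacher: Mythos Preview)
Your argument is correct. Direction (i) (from $\phi^{(\textbf{1})}$ to $\phi$) matches the paper's approach almost verbatim: the paper's Lemma~\ref{wmtime1wmlem} is exactly your Fubini decomposition over unit cubes, phrased with a discretization parameter $K$ that is then sent to infinity, whereas you integrate in $\textbf{s}$ directly.

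Direction (ii), however, is genuinely different and more elementary than the paper's. The paper does \emph{not} compare the Ces\`aro averages of $|H|$ directly; instead it passes through $L^2$ twisted integrals in three steps: Theorem~\ref{wmtwistthm} converts polynomial weak mixing of $\phi$ into polynomial decay of $\gamma^{\phi}_f(T)$; Proposition~\ref{time1twistprop} (a bump-function and Fourier-series argument) transfers this to polynomial decay of the discrete twisted sums $\gamma^{\phi^{(\textbf{1})}}_f(N)$; and Theorem~\ref{wmtwistthm} (this time for $\Z^d$-actions, using spectral measures via Lemmas~\ref{twistspeclem} and~\ref{specwmlem}) converts back to polynomial weak mixing of $\phi^{(\textbf{1})}$. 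Your route bypasses all of this spectral and Fourier machinery by exploiting only the H\"older continuity of the correlation function $H(\textbf{t})$, which follows immediately from (N1) and H\"older continuity of the action. The upshot: your proof of the theorem is shorter and self-contained, while the paper's detour is not wasted effort, since Proposition~\ref{time1twistprop} (and its pointwise counterpart Proposition~\ref{pointtime1twistprop}) are precisely the tools needed later for Theorems~\ref{unithm} and~\ref{classthm} and for the sparse equidistribution results.
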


\section{Pointwise weighted Equidistribution along sparse sequences}

Here, we will investigate equidistribution along sparse sequences. For $d=1$ this means the following, let $\s{a}{n}$ be an increasing sequence of real numbers, for $f\in \B$ the object of interest is
\begin{equation}\label{sparsesumdef}
\frac{1}{N} \sum_{n=0}^{N-1} f(\phi_{a_n}(x)) \;\;\; x\in X.
\end{equation}
Convergence of sums like this has been of increasing interest, for example for $a_n=n^{1+\epsilon}$, with $\epsilon>0$, convergence of the above sum was conjectured by N. Shah when $\phi$ is the horocycle flow. This has been proven (on a compact space), for 
$\epsilon>0$ small enough, in \cite[Theorem 3.1]{venkateshtwist}.

The main idea of the proof is to show that polynomial mixing together with pointwise polynomial decay of ergodic averages implies \textbf{pointwise} polynomial decay of twisted integrals, i.e. there is a $\kappa>0$ such that for $f\in \B\cap L^2_0$ and $a\in\R$ we have 
\begin{equation*}
\frac{1}{T} \left|\int_0^T f(\phi_t(x)) e(at) \d t\right| \ll ||f||_{\B} T^{-\kappa}.
\end{equation*}
It is, in fact, enough to require polynomial weak mixing, rather than polynomial mixing. We will give an elementary proof of this fact, and a generalisation to the situation where $\phi$ is an $\R^d$- or $\Z^d$-action. Furthermore, we shall give an estimate on $\kappa$.

The general bound is the following.

\begin{proposition}\label{vprop1}
For all $f\in L^2_0\cap L^{\infty}$, $\textbf{a}\in \R^d$ (or $[0,1)^d$), $x\in X$ and $T,H>0$ we have
\begin{equation}\label{vprop1rates}
\begin{aligned}
&\left|\frac{1}{(2T)^d} \int_{[-T,T]^d} f(\phi_{\textbf{t}} x) e( \langle \textbf{a} ,\textbf{t} \rangle) \d m(\textbf{t}) \right| \\
& \ll \frac{H}{T} ||f||_{L^{\infty}} + \alpha_{f,f}(H)^{\frac{1}{2}} + \frac{1}{(2H)^d} \left( \int_{[-H,H]^d} \int_{[-H,H]^d} \beta_{f(\phi_{\textbf{h}_1}) \overline{f(\phi_{\textbf{h}_2})},x}(T)\d m(\textbf{h}_1)  \d m(\textbf{h}_2) 
\right)^{\frac{1}{2}}.
\end{aligned}
\end{equation}

\end{proposition}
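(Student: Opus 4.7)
The plan is to adapt Venkatesh's van der Corput-style second moment argument from \cite{venkateshtwist} to the $\R^d$ setting. Write $I$ for the left-hand side of \eqref{vprop1rates}. The idea is to realise $I$ as an average of ``shifted'' twisted integrals over an auxiliary box of side $H$, at the cost of a boundary error of order $(H/T)||f||_{L^{\infty}}$, and then apply Cauchy--Schwarz in the outer variable to reduce matters to a quadratic expression in $f$. This quadratic expression will split into a main term controlled by the weak-mixing rate $\alpha_{f,f}$ and an error controlled by the pointwise rate $\beta$ of decay of ergodic averages at $x$.

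Concretely, for any $\textbf{h}\in[-H,H]^d$, translating the domain $[-T,T]^d$ by $\textbf{h}$ changes $I$ by at most $O((H/T)||f||_{L^{\infty}})$, since the symmetric difference has Lebesgue measure $\ll HT^{d-1}$. Averaging this shifted identity over $\textbf{h}\in[-H,H]^d$ and swapping the order of integration rewrites $I$, up to the same boundary error, as
\begin{equation*}
\frac{1}{(2T)^d}\int_{[-T,T]^d} e(\langle\textbf{a},\textbf{t}\rangle)\Psi(\textbf{t})\d m(\textbf{t}),\qquad \Psi(\textbf{t}):=\frac{1}{(2H)^d}\int_{[-H,H]^d} e(\langle\textbf{a},\textbf{h}\rangle)f(\phi_{\textbf{t}+\textbf{h}}x)\d m(\textbf{h}).
\end{equation*}
Cauchy--Schwarz in $\textbf{t}$ then yields $|I|^2 \ll \frac{1}{(2T)^d}\int|\Psi(\textbf{t})|^2\d m(\textbf{t}) + (H/T)^2||f||_{L^{\infty}}^2$.

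The core work is to analyse this right-hand side. Expanding $|\Psi(\textbf{t})|^2$ as a double integral over $(\textbf{h}_1,\textbf{h}_2)\in[-H,H]^{2d}$ and swapping with the $\textbf{t}$-integral, each inner integral becomes the ergodic average along the $\phi$-orbit of $x$ of the bounded function $(f\circ\phi_{\textbf{h}_1})\overline{(f\circ\phi_{\textbf{h}_2})}$, which by \eqref{ergratesdef} equals $\langle f\circ\phi_{\textbf{h}_1},f\circ\phi_{\textbf{h}_2}\rangle$ up to the error $\beta_{f(\phi_{\textbf{h}_1})\overline{f(\phi_{\textbf{h}_2})},x}(T)$. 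By $\phi$-invariance of $\mu$ the leading term equals $\langle f, f\circ\phi_{\textbf{h}_2-\textbf{h}_1}\rangle$, and the change of variables $\textbf{h}=\textbf{h}_2-\textbf{h}_1$ (together with the trivial bound $\prod_i(2H-|h_i|)_+\leq(2H)^d$ on the resulting Fej\'er-type weights) reduces its contribution to $\ll \alpha_{f,f}(H)$. The error contribution is precisely the double integral of $\beta$ appearing in \eqref{vprop1rates}. Taking square roots and using $\sqrt{a+b}\leq\sqrt{a}+\sqrt{b}$ gives the stated inequality.

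The one genuine subtlety is bookkeeping at the end: after the change of variables $\textbf{h}=\textbf{h}_2-\textbf{h}_1$ the natural argument of $\alpha_{f,f}$ is $2H$ rather than $H$. This dilation can be absorbed into the $\ll$ notation in any regime where $\alpha_{f,f}$ is essentially monotone (e.g.\ polynomial decay), or else side-stepped by running the whole argument with shifts in $[-H/2,H/2]^d$ and then enlarging the smaller $\beta$-integration domain back to $[-H,H]^{2d}$ at the end. Beyond this, the argument is a direct transposition of the one-dimensional proof to higher rank.
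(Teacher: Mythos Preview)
Your proposal is correct and follows essentially the same route as the paper: the shift-and-average step (the paper isolates this as Lemma~\ref{averagetwistlem}), Cauchy--Schwarz in $\textbf{t}$, expansion of the square into a double $(\textbf{h}_1,\textbf{h}_2)$-integral, and the split into the $\langle f\circ\phi_{\textbf{h}_1},f\circ\phi_{\textbf{h}_2}\rangle$ term plus the $\beta$-error are exactly what the paper does. Your remark on the $2H$ versus $H$ discrepancy is in fact more careful than the paper, which silently passes from $\int_{[-2H,2H]^d}$ to $\alpha_{f,f}(H)$ under the $\ll$ sign.
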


In the case of polynomial decay, we obtain the following.

\begin{corollary}\label{quantpolytwistcor}
If $\phi$ has polynomially decaying ergodic averages and is polynomially weak mixing, i.e. there are $\delta_1,\delta_2>0$ such that, for $x\in X$ and $f,g\in L^2_0\cap \B$,
\begin{equation*}
\alpha_{f,g}(T)\ll ||f||_{\B}||g||_{\B} T^{-\delta_1} \;\textit{ and } \; \beta_{f,x}(T)\ll ||f||_{\B} T^{-\delta_2},
\end{equation*}
then
\begin{equation*}
\gamma_{f,x}(T) \ll T^{-\delta} ||f||_{\B},
\end{equation*}
where
\begin{equation*}
\delta=\min\left(\frac{\delta_1}{2+\delta_1}, \frac{\delta_1}{2(d+\delta_1)}, \frac{\delta_1\delta_2}{2(\mathcal{K}d+\delta_1)}\right),
\end{equation*}
where $\mathcal{K}$ is the constant from \eqref{PolyNormGrowth}.
In particular $\delta>0$.
\end{corollary}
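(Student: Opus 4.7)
The plan is to substitute the polynomial rates directly into Proposition~\ref{vprop1} and then optimise the free parameter $H$ as a power of $T$.

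First, I would rewrite each of the three error terms in \eqref{vprop1rates} in terms of $H$, $T$ and $\|f\|_\B$. By (N1), $\|f\|_{L^\infty}\ll \|f\|_\B$, so the first term is $\ll \|f\|_\B\,H/T$. Polynomial weak mixing directly bounds the second by $\|f\|_\B\,H^{-\delta_1/2}$.

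The third term is the most delicate one. For each fixed $\textbf{h}_1,\textbf{h}_2$, the function $g:=(f\circ\phi_{\textbf{h}_1})\cdot\overline{(f\circ\phi_{\textbf{h}_2})}$ lies in $\B$ by the Banach algebra hypothesis, and (N2) then yields
\[
\|g\|_\B\leq \|f\circ\phi_{\textbf{h}_1}\|_\B\,\|f\circ\phi_{\textbf{h}_2}\|_\B\ll \|f\|_\B^{2}(1+|\textbf{h}_1|)^{\mathcal{K}}(1+|\textbf{h}_2|)^{\mathcal{K}}.
\]
Plugging the polynomial ergodic bound $\beta_{g,x}(T)\ll \|g\|_\B T^{-\delta_2}$ into the double integral and noting that the integrand factorises over $\textbf{h}_1,\textbf{h}_2$, the integral reduces to the square of a one-variable polynomial integral on $[-H,H]^d$. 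Taking a square root and dividing by $(2H)^d$ yields a bound of the form $\|f\|_\B\, H^{a}\, T^{-\delta_2/2}$, where $a$ is an explicit function of $\mathcal{K}$ and $d$ coming from the integration.

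Collecting, $\gamma_{f,x}(T)\ll\|f\|_\B\bigl(H/T + H^{-\delta_1/2} + H^{a}T^{-\delta_2/2}\bigr)$. Setting $H=T^{\sigma}$ reduces the proof to a one-parameter optimisation, namely maximising $\min(1-\sigma,\,\sigma\delta_1/2,\,\delta_2/2-a\sigma)$ over $\sigma\in(0,1)$. The three pairwise balances generate the three candidate exponents appearing in the stated minimum, and the worst of them is the best attainable $\delta$; positivity follows at once from $\delta_1,\delta_2>0$. I expect the main (and only) obstacle to be careful bookkeeping: tracking the dimension-dependent constants in the factorised integral and in (N2) precisely enough to hit the denominators $2+\delta_1$, $d+\delta_1$ and $\mathcal{K}d+\delta_1$ in the stated formula.
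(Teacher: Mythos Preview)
Your plan is sound and closely parallels the paper's argument up to the handling of the third term in \eqref{vprop1rates}, but at that point the two approaches diverge, and this is exactly why your bookkeeping will not ``hit'' the stated three-term minimum.

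You bound $g=(f\circ\phi_{\textbf{h}_1})\overline{(f\circ\phi_{\textbf{h}_2})}$ directly via the algebra property and two applications of (N2), giving $\|g\|_\B\ll\|f\|_\B^2(1+|\textbf{h}_1|)^{\mathcal K}(1+|\textbf{h}_2|)^{\mathcal K}$, and then apply the $\beta$ hypothesis. This produces a single term of the shape $\|f\|_\B\,T^{-\delta_2/2}H^{\mathcal K}$ after the square root and division by $(2H)^d$. The paper instead first invokes Lemma~\ref{betalem1} to shift the $\textbf{h}_2$-translation from the function into the time variable, obtaining
\[
\beta_{f(\phi_{\textbf{h}_1})\overline{f(\phi_{\textbf{h}_2})},x}(T)\ll |\textbf{h}_2|_\infty T^{-1}\|f\|_{L^\infty}^2+\beta_{f(\phi_{\textbf{h}_1-\textbf{h}_2})\overline f,x}(T+|\textbf{h}_2|_\infty),
\]
and only then applies (N2) \emph{once}, to $f\circ\phi_{\textbf{h}_1-\textbf{h}_2}$. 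This trades one power of $H^{\mathcal K}$ in the norm growth for an extra ``shift error'' term $T^{-1/2}H^{d/2}$. After optimisation the paper therefore has \emph{four} lines $I,II,III,IV$ in $\kappa$, and the three entries of the stated minimum are precisely the intersections of the single increasing line $II(\kappa)=\kappa\delta_1/2$ with the three decreasing lines.

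Your route has only three terms and hence only two relevant balance points (your remark about ``three pairwise balances'' overcounts: with one increasing line against two decreasing ones, only two intersections matter). You will obtain
\[
\delta_{\text{yours}}=\min\Bigl(\tfrac{\delta_1}{2+\delta_1},\ \tfrac{\delta_1\delta_2}{2(2\mathcal K+\delta_1)}\Bigr),
\]
which is a perfectly valid positive exponent but is not the formula in the corollary: the middle entry $\delta_1/(2(d+\delta_1))$ never appears, and the $\mathcal K d$ in the last denominator becomes $2\mathcal K$. So your argument proves polynomial decay, but to reproduce the \emph{stated} $\delta$ you must replace the direct algebra bound on $g$ by the shift-then-bound manoeuvre of Lemma~\ref{betalem1}.
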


Pointwise decay of twisted integrals (or sums in the case $G=\Z^d$) shall be used to show either sparse equidistribution (i.e. sums as in \eqref{sparsesumdef}) or weighted equidistribution; for $d=1$ and real numbers $\theta_0,\theta_1,...$ we shall investigate sums of the form
\begin{equation}\label{weightsumdef}
\frac{1}{N} \sum_{n=0}^{N-1} \theta_n f(\phi_n(x)) \;\;\; x\in X.
\end{equation}
Let us define the higher dimensional analogue of \eqref{sparsesumdef} and \eqref{weightsumdef}.

Let $B_0\subset B_1 \subset ... \subset G$ be an increasing sequence of finite subsets, and $\Theta=(\theta_{\textbf{b}}^{(n)})_{\textbf{b}\in B_n, n\geq 0}$ be a collection of real (possibly negative) numbers. For a function $f\in L^2_0$ and a point $x\in X$ we will say that $f$ is \textit{equidistributed at $x$ along $\s{B}{n}$ with weights $\Theta$} w.r.t $\phi$ if
\begin{equation}\label{weightsparseequidef}
\frac{1}{\# B_n} \sum_{\textbf{b}\in B_n} \theta_{\textbf{b}}^{(n)} f(\phi_{\textbf{b}}(x)) \rightarrow 0 \;\;\; \as{n}.
\end{equation}

For the purposes of investigation let us fix $n$, i.e. for a finite set $B\subset \R^d$ and real numbers $\Theta=(\theta_{\textbf{b}})_{\textbf{b}\in B}$ we estimate 
\begin{equation}\label{weightsparsesumdef}
\left|\frac{1}{\# B} \sum_{\textbf{b}\in B} \theta_{\textbf{b}} f(\phi_{\textbf{b}}(x))\right|.
\end{equation}

As it turns out, the relevant object, other than twisted integrals (or sums), are the \textit{weighted exponential sums}
\begin{equation*}
S^{\Theta}_B(\xi)= \left| \sum_{\textbf{b}\in B} \theta_{\textbf{b}} e( \langle \xi, \textbf{b} \rangle) \right|.
\end{equation*}
If $\Theta\equiv 1$, then we simply write $S_B=S^{\Theta}_B$. Once we have good control on $S^{\Theta}_B$ the proof is relatively simple. Let $\chi_{\delta}$ be a bump function at $0$, say 
\begin{equation}\label{chidef}
\chi_{\delta}(x)=\delta \max(0,1-\delta^{-1} x).
\end{equation}
Then the sum in \eqref{weightsparsesumdef} is approximated by\footnote{For convenience assume $\textbf{0}\in B$.} 
\begin{equation*}
\frac{1}{\#B} \int_{[-d(B),d(B)]^d} \sum_{\textbf{b}\in B} \theta_{\textbf{b}} g_{\delta}(\textbf{t}-\textbf{b}) f(\phi_{\textbf{t}}(x) \d m(\textbf{t}),
\end{equation*}
where
\begin{equation}\label{gdef}
g(s_1,...,s_d)=\prod_{j=1}^d \chi_{\delta}(s_j),
\end{equation}
and $d(B)=\max_{\textbf{b}\in B, \textbf{b}'\in B} |\textbf{b}-\textbf{b}'|_{\infty}$. 
Deducing bounds on \eqref{weightsparsesumdef} from here is a straightforward exercise in Fourier analysis.

Convergence of the sum in \eqref{weightsparseequidef} is interesting even in the simplest case where $B_N=[-N,N]^d\cap \Z^d$ and $\theta_{\textbf{b}}^{(n)} \equiv 1$. The question then reduces down to ordinary equidistribution of the restricted action $\phi^{(\textbf{1})}$. \\
For example, by taking $f$ continuous one might investigate the basin of invariant measures of $\phi^{(\textbf{1})}$.

\begin{theorem}\label{unithm}
Let $X$ be compact. If $\phi$ is weak mixing (w.r.t $\mu$) then $ B_{\phi}(\mu)\subset B_{\phi^{(\textbf{1})}}(\mu)$. In particular, if $\phi$ is weak mixing and uniquely ergodic, then so is $\phi^{(\textbf{1})}$.\\
Furthermore, assuming $\phi$ is Hölder-continuous, and assumption (N) is satisfied if $\phi$ has polynomially decaying ergodic averages and is polynomially weak mixing, then $\phi^{(\textbf{1})}$ has polynomially decaying ergodic averages.
\end{theorem}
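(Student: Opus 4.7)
The plan is to compare the discrete $\Z^d$-average $A_N(f)(x)=\frac{1}{(2N+1)^d}\sum_{\textbf{n}\in[-N,N]^d\cap\Z^d}f(\phi_\textbf{n}(x))$ against the continuous $\R^d$-average through a smooth bump replacement and Poisson summation, absorbing the discrepancy into pointwise twisted integrals. So first I need that, for $x\in B_\phi(\mu)$ and $f$ continuous bounded with $\int f\,d\mu=0$, one has $\gamma^\phi_{f,x}(T)\to 0$. I apply Proposition \ref{vprop1}: the $\tfrac{H}{T}\|f\|_\infty$ term vanishes when $H=o(T)$, the $\alpha_{f,f}(H)^{1/2}$ term vanishes in $H$ by weak mixing, and for each fixed $H$ the double average of $\beta_{f\circ\phi_{\textbf{h}_1}\overline{f\circ\phi_{\textbf{h}_2}},x}(T)$ tends to $0$ in $T$ by dominated convergence, using that each product is continuous bounded and $x\in B_\phi(\mu)$. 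A diagonal choice $H=H(T)\to\infty$ slowly then concludes. In the polynomial regime, Corollary \ref{quantpolytwistcor} supplies $\gamma^\phi_{f,x}(T)\ll\|f\|_\B T^{-\kappa}$ directly, once one notes $\|f\circ\phi_{\textbf{h}_1}\overline{f\circ\phi_{\textbf{h}_2}}\|_\B\ll\|f\|_\B^2(|\textbf{h}_1||\textbf{h}_2|)^\mathcal{K}$ by (N2) and the subalgebra property of $\B$.

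\textbf{Step 2 (bump and Poisson).} Let $g\geq 0$ be smooth with $\supp g\subset[-1,1]^d$ and $\int g=1$, and set $g_\delta(\textbf{t})=\delta^{-d}g(\textbf{t}/\delta)$ for $\delta\in(0,1/2)$. By continuity of $\mathbf t\mapsto f(\phi_\textbf{t}(x))$,
\[
f(\phi_\textbf{n}(x))=\int g_\delta(\textbf{t}-\textbf{n})f(\phi_\textbf{t}(x))\,dm(\textbf{t})+O(\omega_{f,x}(\delta)),
\]
where $\omega_{f,x}$ is the corresponding modulus of continuity. Summing over $\textbf{n}\in B_N=[-N,N]^d\cap\Z^d$,
\[
A_N(f)(x)=\frac{1}{(2N+1)^d}\int G_N(\textbf{t})f(\phi_\textbf{t}(x))\,dm(\textbf{t})+O(\omega_{f,x}(\delta)),
\]
where $G_N(\textbf{t})=\sum_{\textbf{n}\in B_N}g_\delta(\textbf{t}-\textbf{n})$. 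For $\textbf{t}$ in the bulk of $B_N$, the supports are disjoint and Poisson summation gives $G_N(\textbf{t})=\sum_{\textbf{k}\in\Z^d}\hat g(\delta\textbf{k})e(\langle\textbf{k},\textbf{t}\rangle)$, with a boundary layer of relative volume $O(\delta/N)$. The $\textbf{k}=0$ contribution reconstitutes the continuous $\R^d$-average, which tends to $\int f\,d\mu=0$ since $x\in B_\phi(\mu)$; the $\textbf{k}\neq 0$ contributions are dominated by $\gamma^\phi_{f,x}(N)\sum_{\textbf{k}\neq 0}|\hat g(\delta\textbf{k})|\ll\delta^{-d}\gamma^\phi_{f,x}(N)$ using Schwartz decay of $\hat g$.

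\textbf{Step 3 (optimization and main obstacle).} For the qualitative statement, pick $\delta=\delta(N)\to 0$ slowly enough that $\delta^{-d}\gamma^\phi_{f,x}(N)\to 0$; this is possible precisely because $\gamma^\phi_{f,x}(N)\to 0$. Pointwise convergence of $A_N(f)(x)\to\int f\,d\mu$ at every $x$ then upgrades to unique ergodicity of $\phi^{(\textbf{1})}$ via Krylov--Bogolyubov, whenever $\phi$ itself is uniquely ergodic. For the polynomial statement, (N1) combined with H\"older continuity of $\phi$ yields $\omega_{f,x}(\delta)\ll\|f\|_\B\delta^{\rho'}$ for some $\rho'>0$, and Corollary \ref{quantpolytwistcor} provides $\gamma^\phi_{f,x}(N)\ll\|f\|_\B N^{-\kappa}$, so choosing $\delta\sim N^{-\kappa/(d+\rho')}$ balances the smoothing and Poisson errors and yields a polynomial decay rate. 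The main obstacle will be precisely this polynomial bookkeeping: carrying $\|f\|_\B$-dependence uniformly through the $(\textbf{h}_1,\textbf{h}_2)$-integration in Step 1 via (N2), and simultaneously balancing the smoothing error, the Poisson tail, and the input ergodic/weak-mixing rates into a single coherent polynomial exponent in terms of $\rho,\rho',\mathcal{K},d,\delta_1,\delta_2$.
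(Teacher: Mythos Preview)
Your proposal is correct and follows essentially the same route as the paper: first obtain pointwise decay of twisted integrals at $x$ via Proposition \ref{vprop1} (resp.\ Corollary \ref{quantpolytwistcor} in the polynomial case), then compare the discrete $\Z^d$-average to continuous twisted integrals through a bump-function approximation and Fourier expansion. The paper packages the second step as Proposition \ref{pointtime1twistprop}, using the explicit piecewise-linear bump $\chi_\delta$ and the exponential-sum computation of Lemma \ref{Nexpsumlem} (producing a $\delta^{-2d}$ factor) in place of your smooth bump and Poisson summation (producing $\delta^{-d}$), but the strategy is identical.
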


\begin{remark}
The assumption of compactness in Theorem \ref{unithm} can be replaced by the weaker condition\footnote{This condition is implied by continuity and compactness. Indeed, by continuity, for $\epsilon>0$ and $x\in X$, there is an open neighbourhood $U_x\subset X$ containing $x$ and a $\delta_x>0$ such that $d(y,\phi_{\textbf{t}}(y))<\epsilon$ for $|t|<\delta_x$ and $y\in U_x$. By compactness, there are finitely many $U_{x_i}$ covering $X$. It follows that $\max_{|\textbf{t}|_{\infty}<\delta, x\in X} d(x,\phi_{\textbf{t}}(x)) <\epsilon$ for $ \delta=\min(\delta_{x_i}).$}
\begin{equation}\label{contcond}
\max_{|\textbf{t}|_{\infty}<\delta, x\in X} d(x,\phi_{\textbf{t}}(x)) \rightarrow 0 \;\;\;\textit{ as } \delta\rightarrow 0.
\end{equation}
Note that \eqref{contcond} is also satisfied if $\phi$ is Hölder continuous.
\end{remark}

Note that, even if $\phi$ is not uniquely ergodic, Theorem \ref{unithm} can be used to classify the invariant measures of the restricted action $\phi^{(\textbf{1})}$ assuming that
\begin{itemize}
\item[(a)] all invariant and ergodic measures of $\phi$ are weakly mixing,
\item[(b)] $X$ is partitioned into basins of invariant and ergodic measures of $\phi$.
\end{itemize}
The problem is with the assumption (b), which is often\footnote{In fact, to the author's knowledge, (b) is only known for unipotent flows by Ratner's measure classification Theorem.} is not satisfied. On the contrary there are points $x\in X$ with \textit{mixed behaviour}, meaning there are two different invariant and ergodic measures $\mu_1$ and $\mu_2$, and two sequences $\s{T^{(1)}}{n}$ and $\s{T^{(2)}}{n}$ such that, for each bounded and continuous function $f:X\rightarrow\R$, it holds that
\begin{equation}\label{mixeddef}
\lim_{n\rightarrow\infty} \frac{1}{T^{(1)}_n} \int_0^{T^{(1)}_n} f(\phi_t(x)) \d t = \int_X f(x) \d\mu_1 ,\; \textit{ but } \; \lim_{n\rightarrow\infty} \frac{1}{T^{(2)}_n} \int_0^{T^{(2)}_n} f(\phi_t(x)) \d t = \int_X f(x) \d\mu_2.
\end{equation}
 
However, using similar techniques, we can drop assumption (b), leading us to the following.
 
\begin{theorem}\label{classthm}
Suppose $X$ is compact. If all invariant and ergodic measures of $\phi$ are weak mixing, then it holds that
\begin{align*}
\{\mu & \;|\; \mu \textit{ is an invariant probability measure for } \phi\} \\
&= \{\mu \;|\; \mu \textit{ is an invariant probability measure for } \phi^{(\textbf{1})}\}.
\end{align*}
\end{theorem}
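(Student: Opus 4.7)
The plan is to prove the nontrivial inclusion (every $\phi^{(\textbf{1})}$-invariant probability measure is $\phi$-invariant) using the ergodic decomposition together with Theorem~\ref{wmtime1thm}; the reverse inclusion is immediate because $\phi^{(\textbf{1})}$ is a sub-action of $\phi$.

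First I would reduce to the case where $\mu$ is $\phi^{(\textbf{1})}$-invariant and ergodic, by applying the ergodic decomposition for $\phi^{(\textbf{1})}$ (available because $X$ is compact metric). Then I would introduce the averaged measure $\tilde\mu := \int_{[0,1]^d} (\phi_t)_* \mu \, dm(t)$ and verify that $\tilde\mu$ is $\phi$-invariant; this uses a change of variables together with the hypothesis $(\phi_n)_* \mu = \mu$ for $n \in \Z^d$ to fold any $\phi$-translation of the cube $[0,1]^d$ back to itself modulo $\Z^d$.

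Next I would produce two different ergodic decompositions of $\tilde\mu$ with respect to $\phi^{(\textbf{1})}$ and compare them. On one hand, decomposing the $\phi$-invariant measure $\tilde\mu$ into $\phi$-ergodic components $\tilde\mu = \int \nu_\omega \, d\lambda(\omega)$ yields $\phi$-weak mixing components (by the hypothesis of the theorem), and hence $\phi^{(\textbf{1})}$-weak mixing, and in particular $\phi^{(\textbf{1})}$-ergodic, components by Theorem~\ref{wmtime1thm}. On the other hand, the defining decomposition $\tilde\mu = \int_{[0,1]^d} (\phi_t)_* \mu \, dm(t)$ is also a decomposition into $\phi^{(\textbf{1})}$-ergodic measures, since $\phi_t$ commutes with every $\phi_n$ ($n \in \Z^d$) and so pushforward along $\phi_t$ preserves $\phi^{(\textbf{1})}$-ergodicity. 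Uniqueness of the ergodic decomposition for $\tilde\mu$ with respect to $\phi^{(\textbf{1})}$ then forces the pushforward of Lebesgue measure on $[0,1]^d$ under $t \mapsto (\phi_t)_* \mu$ to coincide with the pushforward of $\lambda$ under $\omega \mapsto \nu_\omega$. Because the latter is supported on $\phi$-invariant measures, $(\phi_t)_* \mu$ must be $\phi$-invariant for $m$-almost every $t \in [0,1]^d$.

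To finish, I would pick any such $t_0$ and set $\nu := (\phi_{t_0})_* \mu$; $\phi$-invariance of $\nu$ gives $(\phi_{s+t_0})_* \mu = (\phi_s)_* \nu = \nu$ for every $s \in \R^d$, and specializing to $s = -t_0$ yields $\mu = \nu$, which is $\phi$-invariant. The main obstacle I anticipate is making the uniqueness-of-ergodic-decomposition step rigorous: one has to verify that $t \mapsto (\phi_t)_* \mu$ is a Borel map whose values are almost surely $\phi^{(\textbf{1})}$-ergodic, and that the resulting pushforward actually qualifies as an ergodic decomposition of $\tilde\mu$. In the compact metric setting these are standard measure-theoretic facts, but they need to be set up carefully for the uniqueness argument to apply.
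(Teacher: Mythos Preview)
Your argument is correct and takes a genuinely different route from the paper. The paper works pointwise: it fixes a $\phi^{(\textbf{1})}$-generic point $x$ for an ergodic $\phi^{(\textbf{1})}$-invariant measure $\mu$, considers subsequential limits of the continuous empirical measures at $x$, and uses Proposition~\ref{vprop1} together with Proposition~\ref{pointtime1twistprop} (with $\textbf{a}=\textbf{0}$) to show that the discrete empirical measures at $x$ have the same subsequential limits, forcing $\mu$ to coincide with one of them. Your proof is purely measure-theoretic: you average $\mu$ over a fundamental domain, compare two $\phi^{(\textbf{1})}$-ergodic decompositions of the resulting $\phi$-invariant measure, and invoke uniqueness. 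The only dynamical input you need is the qualitative half of Theorem~\ref{wmtime1thm} (weak mixing of $\phi$ implies weak mixing of $\phi^{(\textbf{1})}$), which is elementary via eigenfunctions; you entirely bypass the twisted-integral machinery of Propositions~\ref{vprop1} and~\ref{pointtime1twistprop}.

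What each approach buys: yours is shorter and softer, and it makes transparent that the result is really a consequence of ``weak mixing descends to the time-1 map'' plus abstract ergodic-decomposition nonsense. The paper's approach, on the other hand, proves something strictly stronger along the way (see the Remark following the proof): for \emph{every} $x\in X$ and every continuous bounded $f$, the difference between the continuous and discrete ergodic averages tends to zero. Your argument does not see individual orbits and so cannot recover this. Your caveat about making the uniqueness step rigorous is well placed but not a real obstacle here: continuity of $t\mapsto(\phi_t)_*\mu$ in the weak-$*$ topology on $\mathcal{P}(X)$ (from continuity of $\phi$ and compactness of $X$) gives the required measurability, and the standard uniqueness theorem for ergodic decompositions on compact metric spaces applies directly.
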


\section{Examples}

In this paper, we shall focus on one particular examples of sparse sequences. For sparse sequences, we will focus on the sequence $(n^{1+\epsilon})$, thus generalising the result of \cite{venkateshtwist}. Although well know, this shall give a more general flair to the proof.

\begin{definition}\label{1+epsdef}
Let $x\in X$. We say that $\phi$ is \textit{pointwise equidistributed along $n^{1+\epsilon}$ at $x$} if, for small enough $\epsilon_1,...,\epsilon_d>0$ and
\begin{equation*}
B_N=\{ (n_1^{1+\epsilon_1},...,n_d^{1+\epsilon_d}) | n_j\in [0,N-1]^d\cap \Z^d \forall j=1,..,d\},
\end{equation*}
we have
\begin{equation*}
\frac{1}{N^d} \left|\sum_{\textbf{b}\in B_N} f(\phi_{\textbf{b}}(x)) \right| \rightarrow 0 \;\;\; \as{N}, \; \forall f\in C^0\cap L^2_0.
\end{equation*}
In this case, the rates are said to decay polynomially (in $\B$) if there is some $\delta>0$ such that
\begin{equation*}
\frac{1}{N^d} \left|\sum_{\textbf{b}\in B_N} f(\phi_{\textbf{b}}(x)) \right| \ll ||f||_{\B} N^{-\delta} \;\;\; \forall N \geq 1, \forall f\in \B\cap L^2_0.
\end{equation*}
\end{definition}

\begin{theorem}\label{explethm}
 Assume $\phi$ is Hölder-continuous and assumption (N) is satisfied. Let $x\in X$. Suppose $\phi$ has polynomially decaying twisted integrals at $x$, then $\phi$ is pointwise equidistributed along $n^{1+\epsilon}$ at $x$, the rates decay polynomially in $\B$.
\end{theorem}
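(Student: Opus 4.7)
The plan is to prove both sparse and weighted equidistribution simultaneously using the Fourier-analytic scheme sketched after \eqref{weightsparsesumdef}: one controls the object of interest by a combination of (i) pointwise decay of twisted integrals at $x$ (already assumed) and (ii) cancellation in the weighted exponential sum $S^{\Theta}_{B_N}(\xi)$. Write $B_N$ for the finite index set, which is either the lattice of $(n^{1+\epsilon_j})$-tuples or $[1,N]^d\cap \N^d$ equipped with the random weights $\Theta$.

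First I would carry out the smoothing step. Introduce the bump $g=g_\delta$ of \eqref{gdef} at an auxiliary scale $\delta\in(0,1)$ and compare the original sum to
\[
I_N := \frac{1}{\#B_N} \int_{[-d(B_N),d(B_N)]^d} \Bigl(\sum_{\mathbf{b}\in B_N} \theta_\mathbf{b}\, g(\mathbf{t}-\mathbf{b})\Bigr)\, f(\phi_\mathbf{t}(x))\, dm(\mathbf{t}).
\]
The Hölder continuity of $f$ (from (N1)) and of $\phi$ bounds the replacement error by $\ll ||f||_\B\, \delta^{\rho'}\, \sup_\mathbf{b} |\theta_\mathbf{b}|$ for some $\rho'>0$, independently of $N$. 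Expanding $g(\mathbf{t}-\mathbf{b})=\int \hat g(\xi)\, e(\langle \xi,\mathbf{t}-\mathbf{b}\rangle)\,d\xi$ decouples $\mathbf{b}$ from $\mathbf{t}$:
\[
I_N = \frac{1}{\#B_N} \int \hat g(\xi)\, \overline{\sigma^{\Theta}_{B_N}(\xi)}\, J(\xi)\, d\xi,
\]
where $\sigma^{\Theta}_{B_N}(\xi) = \sum_\mathbf{b} \theta_\mathbf{b}\, e(\langle \xi,\mathbf{b}\rangle)$ (so $|\sigma^{\Theta}_{B_N}(\xi)|=S^{\Theta}_{B_N}(\xi)$) and $J(\xi) = \int_{[-d(B_N),d(B_N)]^d} f(\phi_\mathbf{t}(x))\, e(\langle \xi,\mathbf{t}\rangle)\, dm(\mathbf{t})$. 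The pointwise twisted integral hypothesis gives $|J(\xi)| \ll ||f||_\B\, d(B_N)^{d-\kappa}$ uniformly in $\xi$ for some $\kappa>0$, while $\hat g$ localises the $\xi$-integral to $|\xi|\ll 1/\delta$.

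The main obstacle is bounding $S^{\Theta}_{B_N}(\xi)$; this is where the two cases diverge. In the sparse case the sum factorises as $\prod_{j=1}^d \sum_{n=0}^{N-1} e(\xi_j n^{1+\epsilon_j})$, and classical Weyl / van der Corput estimates yield savings $N^{1-\eta(\epsilon_j)}$ on each factor uniformly over $\xi_j$ in a moderate range; very small $\xi$ (where the Weyl bound degenerates) are handled by the smallness of the measure of that region together with the trivial bound $|\sigma^{\Theta}_{B_N}|\ll N^d$ and the twisted integral decay itself. In the random weight case, Hoeffding's inequality applied pointwise gives sub-gaussian tails for $\sigma^{\Theta}_{B_N}(\xi)$ of size $N^{d/2}$; a Lipschitz-in-$\xi$ net argument (via the trivial estimate $||\nabla_\xi \sigma^{\Theta}_{B_N}||_\infty \ll d(B_N)\, N^d$) combined with Borel-Cantelli promotes this to a uniform bound $|S^{\Theta}_{B_N}(\xi)| \ll N^{d/2}(\log N)^{1/2}$ on $\{|\xi|\ll 1/\delta\}$, almost surely. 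Balancing these against the smoothing error by choosing $\delta$ as a suitable negative power of $d(B_N)$ produces the claimed polynomial rate $N^{-\delta'}$ in both cases, with $\delta'>0$ a convex combination of $\kappa$, the Weyl exponent (resp.\ $1/2$), $\rho'$ and the polynomial-norm-growth constant $\mathcal{K}$ from (N2).
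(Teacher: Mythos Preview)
Your smoothing/Fourier decomposition matches Proposition~\ref{sparseequilem1point}, but both of the exponential-sum estimates you propose are too weak, and neither gap is cosmetic.

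\textbf{The $n^{1+\epsilon}$ case.} Applying the scheme to the full set $B_N$ gives a bound of the shape
\[
\frac{d(B_N)^{d}}{\#B_N}\,\gamma_{f,x}(d(B_N))\int S_{B_N}(\xi)\,|\hat g_\delta(\xi)|\,d\xi
\;\approx\; N^{d\epsilon-(1+\epsilon)\kappa}\int S_{B_N}|\hat g_\delta|,
\]
and you need the right-hand side to be $o(1)$ after balancing against the smoothing error $\delta^{\rho'}$. A Weyl/van der Corput bound on $\sum_{n<N}e(\xi n^{1+\epsilon})$ saves only a factor $N^{-c\epsilon}$ (cf.\ Corollary~\ref{lincor} with $p=1+\epsilon$: the gain is $N^{-\epsilon/4}$ at best), so $\int S_{B_N}|\hat g_\delta|$ is still of size $N^{d(1-c\epsilon)}\delta^{-d}$. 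The product is then $\gg N^{(1+\epsilon)(1-\kappa)-c\epsilon}\delta^{-d}$, whose $N$-exponent is $\approx 1-\kappa>0$ for small $\epsilon$; no choice of $\delta$ rescues this. The paper avoids this by \emph{not} working with $B_N$ directly: it first linearises $n^{1+\epsilon}$ on blocks $[k_{j-1},k_j)$ into arithmetic progressions $B'_{N,j}$ (the Taylor error being controlled by the block length), and then applies Proposition~\ref{sparseequilem1point} to each block. For an arithmetic progression the exponential sum is geometric, and Lemma~\ref{b'lem} gives $\int S_{B'_{N,j}}|\hat g_\delta|\ll \delta^{-1}\log j$ with \emph{no} power of $\#B'_{N,j}$. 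This full cancellation, not a Weyl power-saving, is what makes the argument close.

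\textbf{The random-weight case.} Your Hoeffding-plus-net argument correctly produces, almost surely, $\sup_{|\xi|\ll 1/\delta} S^\Theta_{B_N}(\xi)\ll N^{d/2}(\log N)^{1/2}$. But plugging a uniform bound into the integral gives
\[
N^{-\kappa}\cdot N^{d/2}(\log N)^{1/2}\int|\hat g_\delta|\;\ll\; N^{d/2-\kappa}\,\delta^{-d}(\log N)^{1/2},
\]
which is useless unless $\kappa>d/2$; for $d\ge 2$ this never holds. The paper does not bound $\sup_\xi S^\Theta$ at all: it takes moments of the \emph{integral} $Y_{N,\delta}=\int S^\Theta|\hat g_\delta|$ directly (Lemma~\ref{randomexpsumlem}). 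After expanding $Y_{N,\delta}^{2m}$ and using independence, only diagonal terms survive, and the remaining $\sum_\mathbf{n} e(\langle\mathbf{n},\xi-\xi'\rangle)$ gains an extra cancellation from the $\xi$-average. The outcome is $\E(Y_{N,\delta}^{2m})\ll \delta^{-2md}\log^d N$ with \emph{no} power of $N$, which via Borel--Cantelli gives $Y_{N,\delta_N}\le N^{1/m}$ a.s.\ for a suitable $\delta_N$. Taking $m>2/\kappa$ then yields the polynomial rate for every $\kappa\in(0,1)$.
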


\begin{remark}\label{linrem}
Let us remark at this point that linear flows $R_{\alpha}$, even though they do not fit into the setting of Theorem\footnote{They are not weakly mixing.} \ref{explethm}, are equidistributed along $n^{1+\epsilon}$ for all $\epsilon>0$. \\
Indeed, \cite[Theorem 3.5]{kuipers1974uniform} implies that, for every $\xi\in \R\setminus \{0\}$ and $p>0$ (note that there is no upper bound on $p$) it holds that 
\begin{equation*}
\frac{1}{N} \sum_{n=0}^{N-1} e(\xi n^p) \rightarrow 0 \;\;\;\as{N}.
\end{equation*}

The arguments in \cite{kuipers1974uniform} can easily be made quantitative\footnote{Which shall be carried out in \S \ref{linsec}.}, to show that, if the angle is diophantine, the rate of convergence is polynomial.
\end{remark}

Below, we present a list of flows, to which one can apply Theorem \ref{explethm} at \textbf{all} points $x\in X$. 
We do not make any claims on completeness, in fact, it is to be expected that Theorem \ref{explethm} can be applied to most polynomially mixing parabolic flows. In the applications below, we take $\B=C^r$ for some $r\geq 1$. All the flows appearing on our list shall be defined precisely in \S \ref{explesec}.

\begin{enumerate}[(a)]
\item a smooth time changes of unipotent flow on compact space
(In this case polynomial mixing \textbf{always} holds. 
The only restriction to applying our results is polynomial 
equidistribution. This requires
quantitative versions of Ratner's Theorem, which is highly non-trivial and an active field of study.);

\item a generic smooth time change of a Heisenberg nilflow satisfying a diophantine condition.
\end{enumerate}

On the side of higher dimensional actions, not that many examples have been studied. In \S \ref{ddimhorosec} we shall describe, in analogue to the horocyle flow, an $\R^d$ action on a compact homogeneous space, to which we can apply Theorem \ref{explethm}. In particular, regarding equidistribution along\footnote{Rather, on sets of the form $\{(n_1^{1+\epsilon_1},...,n_d^{1+\epsilon_d})\}$ as in Definition \ref{1+epsdef}.} $n^{1+\epsilon}$ this represents a higher dimensional analogue of Venkatesh's result (which is the case of $d=1$).

\part{Proofs}
\section{Weak Mixing and $L^2$ Twisted Integrals}\label{mixtwistsec}

This section will be devoted to proving Theorem \ref{wmtwistthm}.

\begin{lemma}\label{l2intineqlem}
For $f\in L^{2}([-T,T]^d\times well-defined\cdot,x)\in L^1([-T,T]^d)$ for a.e $x$ we have
\begin{equation}\label{ineql2int}
\left|\left| \int_{[-T,T]^d} f(\textbf{t},x) \d m(\textbf{t}) \right|\right|_{L^2(X)}\leq  \int_{[-T,T]^d} \left|\left| f(\textbf{t},x) \right|\right|_{L^2(X)} \d m(\textbf{t}),
\end{equation}
whenever both sides are well-defined and finite.\\
If $f\in L^{2}(\R^d\times X)$ with $f(\cdot,x)\in L^1(\R^d)$ for a.e $x$ we have
\begin{equation}\label{ineql2intrd}
\left|\left| \int_{\R^d} f(\textbf{t},x) \d m(\textbf{t}) \right|\right|_{L^2(X))}\leq  \int_{\R^d} \left|\left| f(\textbf{t},x) \right|\right|_{L^2(X)} \d m(\textbf{t}),
\end{equation}
whenever both sides are well-defined and finite.
\end{lemma}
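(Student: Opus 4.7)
The plan is to prove this via $L^2$-duality combined with Fubini's Theorem and the Cauchy--Schwarz inequality, which is the standard derivation of Minkowski's integral inequality. First, I would recall that for any $h\in L^2(X)$ one has
\[
\|h\|_{L^2(X)} = \sup_{g\in L^2(X),\, \|g\|_{L^2(X)}\leq 1} \left| \int_X h(x)\overline{g(x)}\d\mu(x) \right|.
\]
I would apply this identity with $h(x) = \int_{[-T,T]^d} f(\textbf{t},x) \d m(\textbf{t})$ (well-defined for a.e.\ $x$ by the hypothesis that $f(\cdot,x)\in L^1([-T,T]^d)$) and a generic unit-norm test function $g\in L^2(X)$.

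Next, I would pair $h$ with $\bar g$ and use Fubini's Theorem to swap the order of integration. This swap is legitimate because $f\in L^2([-T,T]^d\times X)$ together with $g\in L^2(X)$ and the finite measure of $[-T,T]^d$ make the product $|f(\textbf{t},x)g(x)|$ integrable on $[-T,T]^d\times X$ (apply Cauchy--Schwarz on $X$ to bound $\|f(\textbf{t},\cdot)g\|_{L^1(X)}$ by $\|f(\textbf{t},\cdot)\|_{L^2(X)}\|g\|_{L^2(X)}$, then use the assumed finiteness of the right-hand side of \eqref{ineql2int}). Once the order is swapped, Cauchy--Schwarz on $X$ for each fixed $\textbf{t}$ gives
\[
\left| \int_X f(\textbf{t},x)\overline{g(x)}\d\mu(x) \right| \leq \|f(\textbf{t},\cdot)\|_{L^2(X)} \|g\|_{L^2(X)} \leq \|f(\textbf{t},\cdot)\|_{L^2(X)}.
\]
Integrating this bound over $\textbf{t}\in[-T,T]^d$ and then taking the supremum over $g$ on the left-hand side yields exactly \eqref{ineql2int}.

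For the $\R^d$ version \eqref{ineql2intrd}, the cleanest route is to repeat verbatim the same duality argument, since the standing hypotheses ensure that the Fubini step and all subsequent steps remain valid when $[-T,T]^d$ is replaced by $\R^d$ (the relevant measures remain $\sigma$-finite). Alternatively, one can apply \eqref{ineql2int} to the restrictions $f\cdot 1_{[-T,T]^d}$ and pass to the limit as $T\to\infty$, using monotone convergence on the right-hand side and dominated convergence on the left (with the finite right-hand side of \eqref{ineql2intrd} providing an integrable majorant).

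The only subtle point is justifying Fubini in each case, but the integrability assumptions in the statement were tailored precisely to make this automatic, so I do not expect any genuine obstacle beyond careful bookkeeping.
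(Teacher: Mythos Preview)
Your argument is correct and is in fact the standard derivation of Minkowski's integral inequality via $L^2$-duality and Fubini. The paper, however, takes a different route: it first verifies \eqref{ineql2int} for separable functions $f(\textbf{t},x)=\phi(x)\psi(\textbf{t})$, then for finite sums of such functions with disjoint supports (using the triangle inequality together with the fact that disjoint supports turn the last step into an equality), then passes to monotone limits via the Monotone Convergence Theorem, and finally approximates a general $f$ by such sums. The $\R^d$ case is then deduced from the $[-T,T]^d$ case by letting $T\to\infty$ and invoking dominated convergence, just as in your alternative suggestion. Your duality approach is shorter and more conceptual, and it avoids the somewhat delicate step (iv) in the paper where one must produce an approximating sequence of disjointly supported products increasing to $f$; the paper's approach, on the other hand, is entirely elementary and does not rely on the duality characterisation of the $L^2$-norm.
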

\begin{proof}
\begin{enumerate}[(i)]
\item First, assume that $f$ is the product of two measurable functions, i.e. there are measurable $\phi\in L^2(X), \psi\in L^1([-T,T]^d)$ such that $f(\textbf{t},x)=\phi(x)\psi(\textbf{t})$. Then
\begin{align*}
&\left|\left| \int_{[-T,T]^d} \phi(x)\psi(\textbf{t}) \d m(\textbf{t}) \right|\right|_{L^2(X)}=||\phi||_{L^2(X)} \left|\int_{[-T,T]^d} \psi(\textbf{t}) \d m(\textbf{t})\right|, \; \textit{ and}\\
&\int_{[-T,T]^d} \left|\left| f(t,x) \right|\right|_{L^2(X)} \d m(\textbf{t})=||\phi||_{L^2(X)} \int_{[-T,T]^d} |\psi(\textbf{t})| \d m(\textbf{t}). 
\end{align*}
\item Suppose inequality \eqref{ineql2int} holds for $f_1,f_2\in L^2([-T,T]\times X)$, and $\supp(f_1)\cap \supp(f_2)=\emptyset$. Then
\begin{align*}
&\left|\left| \int_{[-T,T]^d} f_1(\textbf{t},x)+f_2(\textbf{t},x) \d m(\textbf{t}) \right|\right|_{L^2(X)} \\
&\leq \left|\left| \int_{[-T,T]^d} f_1(\textbf{t},x) \d m(\textbf{t}) \right|\right|_{L^2(X)}+\left|\left| \int_{[-T,T]^d} f_2(\textbf{t},x) \d m(\textbf{t}) \right|\right|_{L^2(X)} \\
&\leq \int_{[-T,T]^d} \left|\left| f_1(\textbf{t},x) \right|\right|_{L^2(X)} \d m(\textbf{t}) +\int_{[-T,T]^d} \left|\left| f_2(\textbf{t},x) \right|\right|_{L^2(X)} \d m(\textbf{t}) \\
&\leq \int_{[-T,T]^d} \left|\left| f_1(\textbf{t},x) \right|\right|_{L^2(X)} + \left|\left| f_2(\textbf{t},x) \right|\right|_{L^2(X)} \d m(\textbf{t}) \\
&\leq \int_{[-T,T]^d} \left|\left| f_1(\textbf{t},x)+f_2(\textbf{t},x) \right|\right|_{L^2(X)} \d m(\textbf{t}). 
\end{align*} 

So the inequality \eqref{ineql2int} also holds for $f_1+f_2$.
\item Suppose that inequality \eqref{ineql2int} holds for $f_n\in L^2$ for $n\geq 1$, and assume that there is a $f\in L^2$ such that $f_n \nearrow f$ pointwise. Due to the Monotone Convergence Theorem, we have
\begin{align*}
&\left|\left| \int_{[-T,T]^d} f_n(\textbf{t},x) \d m(\textbf{t}) \right|\right|_{L^2(X)} \nearrow \left|\left| \int_{[-T,T]^d} f(\textbf{t},x) \d m(\textbf{t}) \right|\right|_{L^2(X)},\; \textit{ and}\\
&\int_{[-T,T]^d} \left|\left| f_n(\textbf{t},x) \right|\right|_{L^2(X)} \d m(\textbf{t}) \nearrow \int_{[-T,T]^d} \left|\left| f(\textbf{t},x) \right|\right|_{L^2(X)} \d m(\textbf{t}).
\end{align*}
Thus inequality \eqref{ineql2int} also holds for $f$.
\item For arbitrary $f\in L^2$ it is standard to find $\phi_{n,k},\psi_{n,k}$ such that $\supp \phi_{n,k} \cap \supp \phi_{n,k'} = \emptyset$ and $\supp \psi_{n,k} \cap \supp \psi_{n,k'} = \emptyset$, $\sum_k \phi_{n,k}\psi_{n,k}\in L^2$ and $\sum_k \phi_{n,k}\psi_{n,k} \nearrow f$. Now apply (i)-(iii).
\end{enumerate}

The second claim follows easily from the first; we have
\begin{equation*}
\left|\left| \int_{[-T,T]^d} f(\textbf{t},x) \d m(\textbf{t}) \right|\right|_{L^2(X)}\leq  \int_{[-T,T]^d} \left|\left| f(\textbf{t},x) \right|\right|_{L^2(X)} \d m(\textbf{t}),
\end{equation*}
for every $T>0$. Now the claim follows from $T\rightarrow\infty$ and dominated convergence.
\end{proof}

\begin{proposition}\label{wmtwistprop1}
For all $f\in L^2_0$, $\textbf{a}\in \R^d$ or $[0,1)^d$), $x\in X$ and $T,H>0$ we have
\begin{equation}\label{wmtwistprop1rates}
\frac{1}{(2T)^d}\left|\left| \int_{[-T,T]^d} f(\phi_{\textbf{t}} x) e( \langle \textbf{a} , \textbf{t}\rangle) \d m(\textbf{t}) \right|\right|_{L^2(X)} 
\ll \alpha_{f,f}(T)^{\frac{1}{2}} .
\end{equation}
\end{proposition}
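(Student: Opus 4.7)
The plan is to square the $L^2(X)$-norm on the left-hand side, expand it as a double integral on $[-T,T]^d\times[-T,T]^d$ via Fubini, use $\phi$-invariance of $\mu$ to recognize the integrand as an inner product of the form $\langle f,f\circ\phi_{\textbf{t}_2-\textbf{t}_1}\rangle$ weighted by a phase, and then bound by passing to absolute values. The resulting single integral is handled by the change of variables $\textbf{s}=\textbf{t}_2-\textbf{t}_1$, turning it into an integral of $|\langle f,f\circ\phi_{\textbf{s}}\rangle|$ over an enlarged box, which is precisely what the weak-mixing quantity $\alpha_{f,f}$ controls.

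Concretely, write $I(x)=\int_{[-T,T]^d} f(\phi_{\textbf{t}}x)e(\langle\textbf{a},\textbf{t}\rangle)\d m(\textbf{t})$. Expanding $||I||_{L^2(X)}^2=\int_X I(x)\overline{I(x)}\d\mu(x)$ and applying Fubini produces
\begin{equation*}
||I||_{L^2(X)}^2=\int_{[-T,T]^d}\!\int_{[-T,T]^d}\!\langle f\circ\phi_{\textbf{t}_1},\,f\circ\phi_{\textbf{t}_2}\rangle\,e(\langle\textbf{a},\textbf{t}_1-\textbf{t}_2\rangle)\d m(\textbf{t}_1)\d m(\textbf{t}_2).
\end{equation*}
Invariance of $\mu$ gives $\langle f\circ\phi_{\textbf{t}_1},f\circ\phi_{\textbf{t}_2}\rangle=\langle f,f\circ\phi_{\textbf{t}_2-\textbf{t}_1}\rangle$. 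Taking absolute values kills the phase factor, and after the substitution $\textbf{s}=\textbf{t}_2-\textbf{t}_1$ (where each $\textbf{s}\in[-2T,2T]^d$ arises from a fibre of $\textbf{t}_1$-values of measure at most $(2T)^d$), I obtain
\begin{equation*}
||I||_{L^2(X)}^2\le (2T)^d\int_{[-2T,2T]^d}|\langle f,f\circ\phi_{\textbf{s}}\rangle|\d m(\textbf{s})=(2T)^d(4T)^d\,\alpha_{f,f}(2T).
\end{equation*}
Dividing by $(2T)^{2d}$ and taking a square root yields $\frac{1}{(2T)^d}||I||_{L^2(X)}\ll\alpha_{f,f}(2T)^{1/2}$, which is the asserted bound modulo the factor-of-two shift in the argument of $\alpha_{f,f}$; this shift is absorbed in the implicit constant of $\ll$ (it is inert both qualitatively and in the polynomial-rate regime, where $\alpha_{f,f}(2T)\ll\alpha_{f,f}(T)$).

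The calculation is essentially one-shot, so there is no serious obstacle. The only step requiring any care is Fubini, which is justified because $(\textbf{t},x)\mapsto f(\phi_{\textbf{t}}x)e(\langle\textbf{a},\textbf{t}\rangle)$ lies in $L^2([-T,T]^d\times X)$ (hence in $L^1$ of this bounded product) by $\mu$-invariance; alternatively, one could invoke Lemma \ref{l2intineqlem} to justify interchanging norm and integral directly.
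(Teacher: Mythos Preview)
Your proof is correct and follows essentially the same route as the paper's: square the $L^2$-norm, expand via Fubini, use $\mu$-invariance to reduce to $|\langle f,f\circ\phi_{\textbf{u}}\rangle|$, and integrate. You are in fact slightly more careful than the paper about the range of $\textbf{u}$ after the substitution (correctly obtaining $[-2T,2T]^d$ and hence $\alpha_{f,f}(2T)$, rather than $[-T,T]^d$), and your remark that this shift is harmless both qualitatively and in the polynomial-rate regime is exactly right.
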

\begin{proof}
We have
\begin{align*}
&\left|\left| \int_{[-T,T]^d} f(\phi_{\textbf{t}} x) e( \langle \textbf{a} , \textbf{t}\rangle) \d m(\textbf{t}) \right|\right|_{L^2(X)}^2 \\
&=\int_X \int_{[-T,T]^d} \int_{[-T,T]^d} f(\phi_{\textbf{t}_1} x) \overline{f(\phi_{\textbf{t}_2} x)} e( \langle \textbf{a}, \textbf{t}_1-\textbf{t}_2 \rangle) \d m(\textbf{t}_1) \d m(\textbf{t}_2) \d \mu(x)\\
&\ll \int_{[-T,T]^d} \int_{[-T,T]^d} | \langle f(\phi_{\textbf{t}_1}), f(\phi_{\textbf{t}_2})\rangle | \d m(\textbf{t}_1) \d m(\textbf{t}_2)\\
&\ll \int_{[-T,T]^d} \int_{[-T,T]^d} | \langle f(\phi_{\textbf{t}_1-\textbf{t}_2}), f\rangle | \d m(\textbf{t}_1) \d m(\textbf{t}_2)\\
&\ll T^d \int_{[-T,T]^d} | \langle f(\phi_{\textbf{u}}), f\rangle | \d m(\textbf{u})\\
&\ll T^{2d} \alpha_{f,f}(T).
\end{align*}  

\end{proof}

Proposition \ref{wmtwistprop1} says that twisted integrals decay whenever $\phi$ is weakly mixing. In fact, the converse also is true. In order to show this, we will take a detour via the theory of spectral measures.

For $f\in L^2(X)$ we will call a measure $\sigma_f$ on $\R^d$ (or $\T^d$) the \textit{spectral measure} of $f$ w.r.t $\phi$ if
\begin{equation*}
\langle f(\phi_{\textbf{t}}),f\rangle = \hat{\sigma}_f(\textbf{t}) = \int_{\hat{G}} e( \langle \textbf{a}, \textbf{t}\rangle) \d\sigma_f(\textbf{a}) \;\;\;\forall \textbf{t}\in G.
\end{equation*}
Such measures, by the Bochner-Herglotz Theorem, always exist and are finite, in fact, $\sigma_f(G)=||f||_{L^2(X)}$. A proof of existence can be found in most books on Ergodic Theory, see for example \cite{KATOKspectral} or \cite{parry2004topics}.\\
For ease of notation, in the case $\hat{G}=\T^d$, we identify $\sigma_f$ with a measure on $\left[0,1\right)^d$ extended periodically to $\R^d$.\\
The following Lemma is a straightforward higher dimensional generalisation (using the same method of proof) of Lemma 5.3 of \cite{quantwmafs}.

\begin{lemma}\label{twistspeclem}
Let $\sigma_f$ be the spectral measure of $f\in L^2_0$, then
\begin{equation*}
\sigma_f\left( -\textbf{a}+ \left[-\frac{1}{T},\frac{1}{T}\right]^d \right) \ll T^{-2d} \left|\left| \int_{[-T,T]^d} f(\phi_{\textbf{t}}) e( \langle\textbf{a}, \textbf{t}\rangle ) \d m(\textbf{t}) \right|\right|_{L^2(X)}^2,
\end{equation*}
for $T>1$ and $\textbf{a}\in \hat{G}$.
\end{lemma}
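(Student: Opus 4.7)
The approach is spectral: expand the $L^2$-norm on the right-hand side using the spectral measure $\sigma_f$, then lower-bound the resulting integral over $\hat{G}$ by restricting to a small cube centered at $-\textbf{a}$. First I would square out
\[
\left\|\int_{[-T,T]^d} f(\phi_{\textbf{t}}) e(\langle \textbf{a}, \textbf{t}\rangle)\d m(\textbf{t})\right\|_{L^2(X)}^{2}
\]
by Fubini, using the invariance of $\mu$ exactly as in the proof of Proposition \ref{wmtwistprop1} to rewrite $\langle f(\phi_{\textbf{t}_1}), f(\phi_{\textbf{t}_2})\rangle$ as $\hat\sigma_f(\textbf{t}_1 - \textbf{t}_2)$. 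Substituting the definition $\hat\sigma_f(\textbf{t}_1-\textbf{t}_2) = \int_{\hat G} e(\langle \textbf{a}',\textbf{t}_1 - \textbf{t}_2\rangle)\d\sigma_f(\textbf{a}')$, swapping the order of integration, and recognizing the double integral over $\textbf{t}_1, \textbf{t}_2 \in [-T,T]^d$ as $\left|\int_{[-T,T]^d} e(\langle \textbf{a}+\textbf{a}', \textbf{t}\rangle)\d m(\textbf{t})\right|^{2}$ leads to the identity
\[
\left\| \int_{[-T,T]^d} f(\phi_{\textbf{t}}) e(\langle \textbf{a}, \textbf{t}\rangle)\d m(\textbf{t}) \right\|_{L^2(X)}^{2} = \int_{\hat G} \prod_{j=1}^d \left( \frac{\sin(2\pi T (a_j + a'_j))}{\pi(a_j + a'_j)} \right)^{2} \d\sigma_f(\textbf{a}').
\]

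Since the integrand is non-negative, I would then restrict the integral to the cube $\{\textbf{a}' : |a_j + a'_j| \le c/T \text{ for every } j\}$, for a small absolute constant $c$ (for instance $c = 1/4$). On this cube $|2\pi T(a_j + a'_j)| \le \pi/2$, so the elementary bound $|\sin x| \ge (2/\pi)|x|$ on $[0, \pi/2]$ forces each factor of the product to be at least $(4T/\pi)^{2}$. The whole integrand is therefore bounded below by $C_d T^{2d}$ for some dimensional constant $C_d > 0$, which yields
\[
C_d T^{2d}\, \sigma_f\!\left(-\textbf{a} + [-c/T, c/T]^d\right) \le \left\| \int_{[-T,T]^d} f(\phi_{\textbf{t}}) e(\langle \textbf{a}, \textbf{t}\rangle)\d m(\textbf{t}) \right\|_{L^2(X)}^{2}.
\]
Rearranging gives the statement, with the constant $c$ absorbed into the $\ll$-notation.

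The only delicate point is the precise cube size: the naive trigonometric bound genuinely fails on the full cube $[-1/T, 1/T]^d$ because $\sin(2\pi T u)$ vanishes at $u = \pm 1/(2T)$, so a strict reading of the lemma would require a shrunk cube $[-c/T, c/T]^d$ with $c \le 1/4$. I read the cube $[-1/T, 1/T]^d$ in the statement as a cube of side length comparable to $1/T$, consistently with how the $\ll$-notation is used throughout the paper; alternatively one could cover the larger cube by finitely many shifted smaller cubes and work with a sup over the shifted frequencies.
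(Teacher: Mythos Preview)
Your proof is correct and follows essentially the same route as the paper's: expand the squared $L^2$-norm via the spectral measure to obtain $\int_{\hat G}\bigl|\int_{[-T,T]^d} e(\langle \textbf{a}+\textbf{b},\textbf{t}\rangle)\d m(\textbf{t})\bigr|^{2}\d\sigma_f(\textbf{b})$, then restrict to a cube of radius $c/T$ on which the kernel is $\gg T^{2d}$, with $c$ absorbed into the $\ll$ exactly as you note (the paper uses $c=1/(16\pi)$ and also treats the case $G=\Z^d$ via the Dirichlet kernel in place of your sine formula). Your remark about the cube size is spot on and matches the paper's final ``replacing $1/(16\pi T)$ by $1/T$'' step.
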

\begin{proof}
First we rewrite
\begin{align*}
&\left|\left| \int_{[-T,T]^d} f(\phi_{\textbf{t}}) e( \langle\textbf{a}, \textbf{t}\rangle) \d m(\textbf{t})\right|\right|_{L^2(X)}^2 \\
& = \int_X \int_{[-T,T]^d} \int_{[-T,T]^d} f(\phi_{\textbf{t}_1}) \overline{f(\phi_{\textbf{t}_2})} e( \langle \textbf{a}, \textbf{t}_1-\textbf{t}_2 \rangle) \d m(\textbf{t}_1) \d m(\textbf{t}_2) \d\mu\\
&=\int_{[-T,T]^d} \int_{[-T,T]^d} e( \langle \textbf{a}, \textbf{t}_1-\textbf{t}_2 \rangle) \langle f(\phi_{\textbf{t}_1}), f(\phi_{\textbf{t}_2}) \rangle\d m(\textbf{t}_1) \d m(\textbf{t}_2)\\
&=\int_{[-T,T]^d} \int_{[-T,T]^d} \int_{\hat{G}} e( \langle \textbf{b} + \textbf{a}, \textbf{t}_1-\textbf{t}_2 \rangle) \d\sigma_f(\textbf{b})\d m(\textbf{t}_1) \d m(\textbf{t}_2)\\
&=\int_{\hat{G}} \left|\int_{[-T,T]^d} e( \langle \textbf{b} + \textbf{a}, \textbf{t} \rangle) \right|^2 \d\sigma_f(\textbf{b}).
\end{align*}
We claim 
\begin{equation}\label{twistspeclemclaim}
T^{2d}\ll \left|\int_{[-T,T]^d} e( \langle \textbf{b} + \textbf{a}, \textbf{t} \rangle) \d m(\textbf{t})\right|^2 ,
\end{equation}
whenever $|\textbf{a}+\textbf{b}|_{\infty}\leq \frac{1}{16\pi T}$.
Indeed, if $G=\R^d$, then 
\begin{equation*}
\left|\int_{[-T,T]^d} e( \langle \textbf{b} + \textbf{a}, \textbf{t} \rangle) \right|^2=\prod_{j=i}^d  \left| \frac{e((a_j+b_j)T)-1}{2\pi (a_j+b_j)}\right|^2.
\end{equation*}
If $|a_j+b_j|T<\frac{1}{16\pi}$, where $\textbf{a}=(a_1,...,a_d)$ and $\textbf{b}=(b_1,...,b_d)$, then $\left|e( (a_j+b_j)T)-1\right|\sim 2\pi |a_j+b_j|T$, so \eqref{twistspeclemclaim} follows.

If $G=\Z^d$ we may assume $T\in \mathbb{N}$. Then
\begin{equation*}
\left|\int_{[-T,T]^d} e( \langle \textbf{b} + \textbf{a}, \textbf{t} \rangle) \right|^2=\prod_{j=i}^d  \left| \frac{e( (a_j+b_j)T)-1}{e( (a_j+b_j) -1}\right|^2
\end{equation*}
 and \eqref{twistspeclemclaim} follows by the same argument.
By \eqref{twistspeclemclaim},
\begin{align*}
T^{2d} & \sigma_f\left( -\textbf{a}+ \left[-\frac{1}{16\pi T},\frac{1}{16\pi T}\right]^d \right)\\
&\ll\int_{-\textbf{a}+ \left[-\frac{1}{16\pi T},\frac{1}{16\pi T}\right]} \left|\int_{[-T,T]^d} e( \langle \textbf{b} + \textbf{a}, \textbf{t} \rangle) \right|^2 \d\sigma_f(\textbf{b})\\
&\ll\left|\left| \int_{[-T,T]^d} f(\phi_{\textbf{t}}) e( \langle\textbf{a}, \textbf{t}\rangle) \d m(\textbf{t}) \right|\right|_{L^2(X)}^2.
\end{align*}
Replacing $\frac{1}{16\pi T}$ by $\frac{1}{T}$ concludes the proof.
\end{proof}

Concluding weak mixing from this 'decay' of spectral measures is standard and can be found in most books on ergodic theory. In an effort to keep these notes self-contained, and to highlight the rates of decay, we will present a proof. Here we follow the arguments of \cite{einsiedler2010ergodic}.

\begin{lemma}\label{expsumlem}
Let $\textbf{a}=(a^{(1)},...,a^{(d)}) \in \hat{G}$, then
\begin{equation*}
\left|\int_{[-T,T]^d} e( \langle \textbf{a}, \textbf{t} \rangle) \d m(\textbf{t})\right| \ll \prod_{j=1}^d \min(T,|a^{(j)}|^{-1}).
\end{equation*}
\end{lemma}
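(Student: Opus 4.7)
The plan is to exploit the product structure of the integrand: since $e(\langle \textbf{a},\textbf{t}\rangle) = \prod_{j=1}^d e(a^{(j)} t_j)$, Fubini reduces the $d$-dimensional integral to a product of one-dimensional ones,
\begin{equation*}
\int_{[-T,T]^d} e(\langle \textbf{a},\textbf{t}\rangle) \d m(\textbf{t}) = \prod_{j=1}^d \int_{-T}^T e(a^{(j)} t) \d t,
\end{equation*}
(with the appropriate counting-measure analogue in the $\Z^d$ case). So it suffices to establish the one-dimensional bound $\left|\int_{-T}^T e(a t) \d t\right| \ll \min(T, |a|^{-1})$ and then multiply the estimates.

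For the one-dimensional bound there are two regimes. First, the trivial bound, obtained by estimating the integrand in absolute value by $1$, yields $\left|\int_{-T}^T e(at) \d t\right| \leq 2T \ll T$. Second, if $a \neq 0$, an explicit antiderivative gives
\begin{equation*}
\int_{-T}^T e(at) \d t = \frac{e(aT) - e(-aT)}{2\pi i a} = \frac{\sin(2\pi a T)}{\pi a},
\end{equation*}
whose modulus is at most $\frac{1}{\pi |a|} \ll |a|^{-1}$. Taking the minimum of the two bounds gives the desired $\min(T,|a|^{-1})$.

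For the discrete case $G = \Z^d$, the same reduction applies, and the one-dimensional sum $\sum_{t=-T}^T e(at)$ is evaluated as a geometric series, giving $\left|\sum_{t=-T}^T e(at)\right| = \left|\frac{\sin(\pi a (2T+1))}{\sin(\pi a)}\right|$ (for $a \notin \Z$), which is trivially bounded by $2T+1 \ll T$ and by $|\sin(\pi a)|^{-1} \ll \|a\|^{-1}$, where $\|a\|$ is the distance from $a$ to $\Z$; since $\textbf{a} \in \hat{G} = \T^d$ in this setting, $|a^{(j)}|$ should be interpreted as this distance, and the same minimum bound follows.

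There is no real obstacle here — the result is a direct factorization combined with a textbook exponential integral (or geometric sum) estimate. The only thing requiring a small amount of care is writing the $\R^d$ and $\Z^d$ cases uniformly, which is handled by the convention that $|a^{(j)}|$ denotes distance to $0$ in the appropriate dual group.
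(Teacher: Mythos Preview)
Your proof is correct. The paper actually omits the proof of this lemma entirely (it is stated without proof, being a standard estimate), and your argument --- factorising via Fubini and bounding each one-dimensional integral by the minimum of the trivial bound $2T$ and the explicit evaluation $|\sin(2\pi aT)/(\pi a)| \le (\pi|a|)^{-1}$, with the geometric-series analogue for $G=\Z^d$ --- is precisely the intended elementary verification.
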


\begin{lemma}\label{specwmlem}
Denote
\begin{equation*}
\sup_{\textbf{a}\in \hat{G}} \sigma_f\left( -\textbf{a}+ \left[-\frac{1}{T},\frac{1}{T}\right]^d \right) =\delta_f(T),
\end{equation*}
Then, for $\epsilon>0$, we have
\begin{equation*}
\int_{[-T,T]^d} \left|\langle f(\phi_{\textbf{t}}, f\rangle \right| \d m(\textbf{t}) \ll (2T)^d\delta_f \left(\frac{1}{\epsilon}\right)^{\frac{1}{2}} + ||f||_{L^2(X)} T^{d-\frac{1}{2}}\epsilon^{-\frac{1}{2}}.
\end{equation*}
\end{lemma}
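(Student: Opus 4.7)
The plan is to convert the $L^1$ estimate into an $L^2$ estimate on $[-T,T]^d$ via Cauchy--Schwarz and then apply the spectral formula $\langle f(\phi_{\textbf{t}}), f\rangle = \hat{\sigma}_f(\textbf{t})$ together with Lemma \ref{expsumlem}. Concretely, Cauchy--Schwarz gives
\[
\int_{[-T,T]^d} |\langle f(\phi_{\textbf{t}}), f\rangle| \, dm(\textbf{t}) \leq (2T)^{d/2} \Bigl(\int_{[-T,T]^d} |\hat{\sigma}_f(\textbf{t})|^2 \, dm(\textbf{t})\Bigr)^{1/2},
\]
so it suffices to estimate the $L^2$ integral on the right.

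For the $L^2$ piece, expand $|\hat{\sigma}_f(\textbf{t})|^2$ as a double integral against $\sigma_f\otimes\sigma_f$ and swap the order of integration (Fubini) to obtain
\[
\int_{[-T,T]^d} |\hat{\sigma}_f|^2 \, dm = \int\int F_T(\textbf{a}-\textbf{b}) \, d\sigma_f(\textbf{a}) \, d\sigma_f(\textbf{b}),
\]
where $F_T(\textbf{c}) = \int_{[-T,T]^d} e(\langle \textbf{c}, \textbf{t}\rangle) \, dm(\textbf{t})$ satisfies $|F_T(\textbf{c})| \ll \prod_j \min(T, |c_j|^{-1})$ by Lemma \ref{expsumlem}.

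The decisive step is to split the $(\textbf{a},\textbf{b})$-integration according to whether $|\textbf{a}-\textbf{b}|_{\infty}<\epsilon$ or $\geq \epsilon$. On the near-diagonal piece, I bound $|F_T|\leq (2T)^d$ trivially and combine this with the Fubini-type estimate $\int \sigma_f(\textbf{a}+[-\epsilon,\epsilon]^d)\,d\sigma_f(\textbf{a}) \leq \sigma_f(\hat{G})\,\delta_f(1/\epsilon)$ to obtain a contribution $\ll T^d \delta_f(1/\epsilon)$. On the off-diagonal piece, at least one coordinate satisfies $|c_j|\geq \epsilon$, so $\prod_j \min(T,|c_j|^{-1})\leq T^{d-1}\epsilon^{-1}$; integrating this against $\sigma_f\otimes\sigma_f$ and using $\sigma_f(\hat{G}) = ||f||_{L^2}^2$ gives a contribution $\ll T^{d-1}\epsilon^{-1}||f||_{L^2}^4$.

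Summing the two contributions, taking the square root via $\sqrt{a+b}\leq\sqrt{a}+\sqrt{b}$, and multiplying by $(2T)^{d/2}$ yields the claimed inequality (any residual $||f||_{L^2}$ factor in the first summand being absorbed into the $\ll$-constant). The only genuinely delicate step is the diagonal/off-diagonal split; everything else is Fubini, Cauchy--Schwarz, and the stated oscillatory-integral bound.
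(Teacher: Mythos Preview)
Your proof is correct and follows essentially the same route as the paper: Cauchy--Schwarz to pass to $\int |\hat{\sigma}_f|^2$, Fubini to rewrite this as a double integral of the Fej\'er-type kernel $F_T(\textbf{a}-\textbf{b})$ against $\sigma_f\otimes\sigma_f$, then the diagonal/off-diagonal split at scale $\epsilon$. Your off-diagonal estimate (``at least one factor is $\leq \epsilon^{-1}$, the rest are $\leq T$'') is in fact a slightly cleaner packaging of the paper's argument, which writes out the full decomposition over subsets $\mathcal{J}\subset\{1,\dots,d\}$ of ``large'' coordinates before arriving at the same bound $\epsilon^{-1}T^{d-1}$; and your remark about the stray $\|f\|_{L^2}$ factor in the first summand matches exactly what the paper does (it silently drops the same factor).
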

\begin{proof}
By Cauchy-Schwartz we have
\begin{equation*}
\int_{[-T,T]^d} \left|\langle f(\phi_{\textbf{t}}, f\rangle \right| \d m(\textbf{t}) \leq T^{\frac{d}{2}} \left( \int_{[-T,T]^d} \left|\langle f(\phi_{\textbf{t}}, f\rangle \right|^2 \d m(\textbf{t}) \right)^{\frac{1}{2}} 
\end{equation*}
so it suffices to bound $\int_{[-T,T]^d} \left|\langle f(\phi_{\textbf{t}}, f\rangle \right|^2 \d m(\textbf{t})$. We have
\begin{align*}
\int_{[-T,T]^d} \left|\langle f(\phi_{\textbf{t}}, f\rangle \right|^2 \d m(\textbf{t}) &= \int_{[-T,T]^d} \hat{\sigma_f}(\textbf{t}) \hat{\sigma_f}(-\textbf{t}) \d m(\textbf{t})\\
&= \int_{\hat{G}} \int_{\hat{G}} \int_{[-T,T]^d} e( \langle \textbf{a}_1-\textbf{a}_2 , \textbf{t} \rangle) \d\sigma_f(\textbf{a}_1) \d\sigma_f(\textbf{a}_2) \d m(\textbf{t}). 
\end{align*}
Denote $\psi_T(\textbf{a}_1,\textbf{a}_2)=\left| \int_{[-T,T]^d} e( \langle \textbf{a}_1-\textbf{a}_2 , \textbf{t} \rangle) \d m(\textbf{t}) \right|$. For $\epsilon>0$ we claim
\begin{equation}\label{specwmlemclaim}
\int_{|\textbf{a}_1-\textbf{a}_2|_{\infty} > \epsilon} \psi_t(\textbf{a}_1,\textbf{a}_2)  \d\sigma_f(\textbf{a}_1) \d\sigma_f(\textbf{a}_2) \ll \epsilon^{-1} T^{d-1}.
\end{equation}
For a non-empty $\mathcal{J}\subset \{1,...,d\}$ let
\begin{equation*}
C_{\mathcal{J}}=\{(\textbf{a}_1,\textbf{a}_2)\in \hat{G}\times \hat{G}\;|\; |a_1^{(j)}-a_2^{(j)}|> \epsilon \;\iff \;j\in \mathcal{J}\}.
\end{equation*}
Then by Lemma \ref{expsumlem} we have $\psi_T (\textbf{a}_1,\textbf{a}_2)< \epsilon^{-\#\mathcal{J}} T^{d-\#\mathcal{J}}$, furthermore
\begin{equation*}
\{|\textbf{a}_1-\textbf{a}_2|_{\infty} > \epsilon\} = \bigcup_{\emptyset\not= \mathcal{J} \subset \{1,...,d\}} C_{\textbf{J}}.
\end{equation*}
The claim \eqref{specwmlemclaim} follows.

So
\begin{align*}
&\int_{[-T,T]^d} \left|\langle f(\phi_{\textbf{t}}, f\rangle \right|^2 \d m(\textbf{t}) \ll \int_{\hat{G}} \int_{\hat{G}} \psi_T(\textbf{a}_1, \textbf{a}_2) \d\sigma_f(\textbf{a}_1) \d\sigma_f(\textbf{a}_2) \\
&\ll \int_{|\textbf{a}_1-\textbf{a}_2|_{\infty} \leq \epsilon} (2T)^d \d\sigma_f(\textbf{a}_1) \d\sigma_f(\textbf{a}_2) + \int_{|\textbf{a}_1-\textbf{a}_2|_{\infty} > \epsilon} \psi_t(\textbf{a}_1,\textbf{a}_2)  \d\sigma_f(\textbf{a}_1) \d\sigma_f(\textbf{a}_2)\\
&\ll \int_{\hat{G}} (2T)^d \sigma_f\left( -\textbf{a}+ \left[-\epsilon, \epsilon\right]^d \right) \d\sigma_f(\textbf{a}) + ||f||_{L^2(X)}^2 \epsilon^{-1} T^{d-1}\\
&\ll (2T)^d\delta_f \left(\frac{1}{\epsilon}\right) + ||f||_{L^2(X)}^2 \epsilon^{-1} T^{d-1}.
\end{align*}

\end{proof}

\begin{proof}[Proof of Theorem \ref{wmtwistthm}]
(i) If $\phi$ is weak mixing (or polynomially weak mixing), then Proposition \ref{wmtwistprop1} immediately yields the desired decay of twisted integrals.

(ii) On the other hand assume that twisted integrals decay. By Lemma~\ref{twistspeclem}, for $f\in L^2_0$ (or $f\in \B\cap L^2_0$), we have
\begin{equation*}
\sigma_f\left( -\textbf{a}+ \left[-\frac{1}{T},\frac{1}{T}\right]^d \right) \ll T^{-2d} \left|\left| \int_{[-T,T]^d} f(\phi_{\textbf{t}}) e( \langle\textbf{a}, \textbf{t}\rangle) \d m(\textbf{t}) \right|\right|_{L^2(X)}^2\ll \gamma_f(T)^2.
\end{equation*}
Lemma \ref{specwmlem} yields
\begin{equation}\label{acineq}
\alpha_{f,f}(T) \ll \sigma_f\left( -\textbf{a}+ \left[-\epsilon,\epsilon \right]^d \right) + ||f||_{L^2(X)} T^{-\frac{1}{2}}\epsilon^{-\frac{1}{2}}\ll \gamma_f(\epsilon^{-1}) + ||f||_{L^2(X)} T^{-\frac{1}{2}}\epsilon^{-\frac{1}{2}},
\end{equation}
for all $\epsilon>0$. Choosing $\epsilon=T^{-\frac{1}{2}}$ yields $\alpha_{f,f}(T) \rightarrow 0$ as $T\rightarrow \infty$. \\
For $f,g\in L^2_0$ we have
\begin{equation}\label{poleq}
\alpha_{f,g}(T) \leq \frac{1}{4}(\alpha_{f+g,f+g}(T)+\alpha_{f-g,f-g}(T)),
\end{equation} 
hence also $\alpha_{f,g}(T)\rightarrow 0$ as $T\rightarrow\infty$.

If twisted integrals decay polynomially, say $\gamma_f(T)\ll ||f||_\B T^{-\kappa}$, then setting $\epsilon=T^{-\min(1,\kappa^{-1})}$ in \eqref{acineq} yields
\begin{equation*}
\alpha_{f,f}(T) \ll ||f||_{\B} T^{-\min\left(1,\kappa\right)}.
\end{equation*}
The polynomial bound on $\alpha_{f,g}(T)$ for $f\neq g$ also follows from \eqref{poleq}.
\end{proof}

\begin{remark}
In Proposition \ref{wmtwistprop1} and Lemmas \ref{twistspeclem} and \ref{specwmlem} we can easily replace $[-T, T]^d$ by another increasing sequence of sets $A_T\subset G$ satisfying\footnote{In particular if $A_T$ is a F$\o$lner sequence, then \eqref{poleq} holds.} 
\begin{equation}\label{wmremeq1}
\int_{A_T} e( \langle \textbf{a}, \textbf{t}\rangle) \d m(\textbf{t}) =o(m(A_T)) \;\;\;\as{T},
\end{equation}
for all $\textbf{a}\not=0 \in \hat{G}$.\footnote{We implicitly assume $m(A_T)\rightarrow \infty$ otherwise the statement below makes no sense.} For such a sequence $A_T$, we have
\begin{equation}\label{wmremclaim1}
\int_{A_T} |\langle f(\phi_{\textbf{t}}),f\rangle | \d m(\textbf{t}) \rightarrow 0 \;\;\; \as{T},
\end{equation}
if $\phi$ is weak mixing. The rate of convergence depends (in a way easily seen from the above lemmas) on the speed of weak mixing and the growth rate in \eqref{wmremeq1}.
\end{remark}

\section{The restricted action} \label{restrictsec}

For a function $h\in L^2$, denote its \textit{Fourier transform} by
\begin{equation*}
\F(h)(\xi)=\int_{\R^d} e(- \langle \xi,\textbf{x}\rangle) h(\textbf{x}) \d \textbf{x} \;\;\; \xi\in \R^d.
\end{equation*}

For a function $h\in C^{\infty}_c$ and $A>0$ such that $\supp h\subset [-A,A]^d$ denote the \textit{Fourier series} by
\begin{equation*}
\F_A(h)(\textbf{k})=\frac{1}{A^d} \int_{[-A,A]^d} e(-\frac{1}{A} \langle \textbf{k},\textbf{x}\rangle) h(\textbf{x}) \d \textbf{x} \;\;\; \textbf{k}\in \Z^d.
\end{equation*}
Standard theory, for $h\in L^2$, shows that $h(\textbf{x})= \int_{\R^d} \F(h)(\xi) e( \langle \xi,\textbf{x}\rangle) \d\xi$, and $h(\textbf{x})=\sum_{\textbf{k}\in \Z^d} \F_A(h)(\textbf{k}) e\left(\frac{1}{A} \langle \textbf{k},\textbf{x}\rangle\right)$ for $\textbf{x}\in [-A,A]^d$.\\
In the following, we prove equidistribution (and more) along a subsequence. 
In the proofs one may use $\F$ or $\F_A$, which one we use does not make much of a difference, but one might be more convenient than the other (we will remark on this later).

\begin{lemma}\label{l2twistl1lem}
For $f\in L^2_0$, $T>0$ and $\psi\in L^2$ with $\F(\psi)\in L^1$, we have
\begin{equation}\label{l2twistlemeq1}
\frac{1}{(2T)^d} \left| \left| \int_{[-T,T]^d} f(\phi_{\textbf{t}}(x)) \psi(\textbf{t}) \d m(\textbf{t}) \right|\right|_{L^2(X)} \ll ||\F(\psi)||_{L^1(\R^d)} \gamma_f(T) .
\end{equation}
Furthermore if $\supp \psi \subset [-A,A]^d$ and $\F_A(\psi)\in l^1$, then
\begin{equation*}
\frac{1}{(2T)^d} \left| \left| \int_{[-T,T]^d} f(\phi_{\textbf{t}}(x)) \psi(\textbf{t}) \d m(\textbf{t}) \right|\right|_{L^2(X)} \ll ||\F_A(\psi)||_{l^1} \gamma_f(T) .
\end{equation*}
\end{lemma}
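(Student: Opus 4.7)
The plan is to expand $\psi$ using Fourier inversion, swap the order of integration, and then apply the integral Minkowski-type inequality from Lemma \ref{l2intineqlem} to reduce the $L^2$ norm of the integral to a weighted integral of the twisted integrals already controlled by $\gamma_f(T)$.

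More concretely, I would first write, using Fourier inversion $\psi(\textbf{t}) = \int_{\R^d} \F(\psi)(\xi)\, e(\langle \xi, \textbf{t}\rangle)\, \d\xi$,
\begin{equation*}
\int_{[-T,T]^d} f(\phi_{\textbf{t}}(x))\, \psi(\textbf{t})\, \d m(\textbf{t}) = \int_{\R^d} \F(\psi)(\xi) \left( \int_{[-T,T]^d} f(\phi_{\textbf{t}}(x))\, e(\langle \xi, \textbf{t}\rangle)\, \d m(\textbf{t}) \right) \d\xi.
\end{equation*}
The exchange of integrals is justified by Fubini, since the inner integral is bounded in $(\textbf{t}, \xi)$ by $(2T)^d \|f\|_{L^2}$ (as a function on $X$ in $L^2$-norm) and $\F(\psi) \in L^1(\R^d)$.

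Then I would take the $L^2(X)$-norm and apply Lemma \ref{l2intineqlem} (in the $\R^d$ form \eqref{ineql2intrd}) to pull the norm inside the $\xi$-integral, followed by the $L^2$-twisted-integral bound $\gamma_f(T)$ from \eqref{twistdefl2}:
\begin{align*}
\left\| \int_{[-T,T]^d} f(\phi_{\textbf{t}}(x)) \psi(\textbf{t}) \d m(\textbf{t}) \right\|_{L^2(X)}
&\leq \int_{\R^d} |\F(\psi)(\xi)| \left\| \int_{[-T,T]^d} f(\phi_{\textbf{t}}(x)) e(\langle \xi, \textbf{t}\rangle) \d m(\textbf{t}) \right\|_{L^2(X)} \d\xi \\
&\leq (2T)^d\, \gamma_f(T)\, \|\F(\psi)\|_{L^1(\R^d)}.
\end{align*}
Dividing by $(2T)^d$ yields the first inequality. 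For the second inequality, the argument is structurally identical, replacing Fourier inversion by the Fourier series expansion $\psi(\textbf{t}) = \sum_{\textbf{k}\in\Z^d} \F_A(\psi)(\textbf{k})\, e(\tfrac{1}{A}\langle \textbf{k}, \textbf{t}\rangle)$ valid on $[-A,A]^d \supset \supp \psi$, exchanging sum and integral (justified by $\F_A(\psi) \in \ell^1$), and bounding each summand by $(2T)^d \gamma_f(T)$ uniformly in $\textbf{k}$ (since the definition of $\gamma_f$ takes a supremum over all frequencies $\textbf{a}\in\R^d$, which covers $\textbf{a} = \textbf{k}/A$).

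The only delicate point is the application of Lemma \ref{l2intineqlem} and Fubini — one should verify the integrability hypotheses, but these follow automatically from $\F(\psi) \in L^1$ (resp. $\F_A(\psi) \in \ell^1$) together with the uniform bound $\|\int_{[-T,T]^d} f(\phi_{\textbf{t}}) e(\langle \xi, \textbf{t}\rangle) \d m(\textbf{t})\|_{L^2(X)} \leq (2T)^d \|f\|_{L^2(X)}$. So no step presents a genuine obstacle; the lemma is essentially an $L^2$-valued Hausdorff--Young estimate adapted to the twisted-integral framework.
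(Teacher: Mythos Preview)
Your proposal is correct and follows essentially the same route as the paper: Fourier inversion for $\psi$, Fubini to swap the $\textbf{t}$- and $\xi$-integrals, then Lemma \ref{l2intineqlem} to pull the $L^2(X)$-norm inside the $\xi$-integral, and finally the uniform bound $\gamma_f(T)$ on each twisted integral. The paper's own proof is in fact terser and omits the integrability checks you spell out, so your version is a slightly more careful rendering of the same argument.
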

\begin{proof}
We only prove \eqref{l2twistlemeq1}, the other assertion can be shown analogously. By Fubini's Theorem and Lemma \ref{l2intineqlem} we have
\begin{align*}
&\left| \left| \int_{[-T,T]^d} f(\phi_{\textbf{t}}(x)) \psi(\textbf{t}) \d m(\textbf{t}) \right|\right|_{L^2(X)}\\
&\leq \left| \left| \int_{[-T,T]^d} \int_{\R^d} \F(\psi)(\xi) e( \langle \xi, \textbf{t} \rangle) \d\xi f(\phi_{\textbf{t}}(x)) \d m(\textbf{t}) \right|\right|_{L^2(X)}\\
&\leq \int_{\R^d} |\F(\psi)(\xi)| \d\xi \left| \left| \int_{[-T,T]^d} e( \langle \xi, \textbf{t} \rangle) f(\phi_{\textbf{t}}(x)) \d m(\textbf{t}) \right|\right|_{L^2(X)}.
\end{align*}

\end{proof}

To show decay of weighted sums along sparse sequences, say a sequence of finite $B_N\subset \R^d$ (or $\Z^d$) and weights $\Theta=(\theta_{\textbf{b}}^{(n)})$, we will use the following strategy. For $\delta>0$ let 
\begin{equation*}
\chi_{\delta}(t)= \delta^{-1} \max(0, 1-\delta^{-1} t), \; g_{\delta}(\textbf{t})=\prod_{j=1}^d \chi_{\delta}(t_j),
\end{equation*}
where $\textbf{t}=(t_1,...,t_d)$, and
\begin{equation}\label{Gdef}
G_{\delta,N}(\textbf{t})=\sum_{\textbf{b}\in B_N} \theta_{\textbf{b}}^{(N)} g_{\delta}(\textbf{t}-\textbf{b}),
\end{equation}
where $\textbf{t}=(t_1,...,t_d)$.\\
Since $g$ is a bump function at $\textbf{0}$, heuristically we have $G_{\delta,N} \sim \sum_{\textbf{b}\in B_N} \theta_{\textbf{b}}^{(N)} \delta_{\textbf{b}}$
for small $\delta$.
At the same time, by Lemma \ref{l2twistl1lem}
\begin{equation*}
\int_{[-T,T]^d} G_{\delta,N}(\textbf{t}) e( \langle \textbf{a}, \textbf{t}\rangle) f(\phi_{\textbf{t}}(x)) \d \textbf{t}=o(||\hat{G}_{\delta,N}||_{L^1(\R^d)}),
\end{equation*}
if $\phi$ is weakly mixing.
The latter term depends primarily on ($\delta$ and) the exponential sums 
\begin{equation*}
\left|\sum_{\textbf{b}\in B_N} \theta_{\textbf{b}}^{(N)} e( \langle \textbf{b},\textbf{x} \rangle)\right|,
\end{equation*}
which for most $\textbf{x}$ (choosing carefully $B_N$ and $\Theta$) is of order $o(\# B_N)$. This will be made precise in the next Lemmas.

\begin{lemma}\label{fouriercoeffchilem}
For $\delta>0$, we have 
\begin{equation*}
\F(\chi_{\delta})(0)=1 \;\textit{ and }\; \F(\chi_{\delta})(\xi)=\frac{1}{2\xi^2\pi^2\delta^2} \left(1-\cos \left( 2\pi\xi\delta\right)\right),
\end{equation*}
for $\xi\not=0$. For $A>\delta>0$, we have 
\begin{equation*}
\F_A(\chi_{\delta})(0)=A^{-1} \;\textit{ and }\; \F_A(\chi_{\delta})(k)=\frac{A}{2k^2\pi^2\delta^2} \left(1-\cos \left( \frac{2\pi}{A} k\delta\right)\right),
\end{equation*}
for $k\not=0$. Furthermore $||\F(\chi_{\delta})||_{L^1}\ll \delta^{-1}$ and $||\F_A(\chi_{\delta})||_{l^1}\ll A^{-1}+\delta^{-1}$.
\end{lemma}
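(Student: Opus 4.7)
The plan is straightforward: the function $\chi_\delta$ is the standard (symmetric) triangle bump of height $\delta^{-1}$ supported on $[-\delta,\delta]$, whose Fourier transform is a classical sinc-squared kernel. I would first compute $\F(\chi_\delta)$ and $\F_A(\chi_\delta)$ by direct integration, and then bound the $L^1$ and $\ell^1$ norms by splitting into low and high frequencies.

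For the explicit formulae, I would use symmetry of $\chi_\delta$ to reduce
\begin{equation*}
\F(\chi_\delta)(\xi)=\int_{-\delta}^{\delta}\delta^{-1}(1-\delta^{-1}|t|)e(-\xi t)\d t=2\delta^{-1}\int_0^{\delta}(1-\delta^{-1}t)\cos(2\pi\xi t)\d t.
\end{equation*}
For $\xi=0$ this is $1$. For $\xi\neq 0$, an integration by parts evaluates $\int_0^\delta t\cos(2\pi\xi t)\d t$, the first terms cancel, and after simplification only the $\cos(2\pi\xi\delta)-1$ piece survives, yielding the stated formula $\F(\chi_\delta)(\xi)=(2\pi^2\xi^2\delta^2)^{-1}(1-\cos(2\pi\xi\delta))$. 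The Fourier series computation is identical with $\xi$ replaced by $k/A$ and an extra factor of $A^{-1}$ in front, giving the analogous expressions. The value at $k=0$ is $A^{-1}\int\chi_\delta=A^{-1}$.

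For the $L^1$ bound, I would use the identity $1-\cos(2\pi\xi\delta)=2\sin^2(\pi\xi\delta)$ so that $\F(\chi_\delta)(\xi)=(\pi\xi\delta)^{-2}\sin^2(\pi\xi\delta)$. On the region $|\xi|\leq\delta^{-1}$ I bound $|\F(\chi_\delta)(\xi)|\leq 1$ (which is already immediate from $\|\chi_\delta\|_{L^1}=1$), giving a contribution $\ll\delta^{-1}$. On $|\xi|>\delta^{-1}$ I use $\sin^2\leq 1$ so that $|\F(\chi_\delta)(\xi)|\leq(\pi\xi\delta)^{-2}$, which integrates to $\ll\delta^{-2}\cdot\delta=\delta^{-1}$. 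Summing gives $\|\F(\chi_\delta)\|_{L^1}\ll\delta^{-1}$.

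The $\ell^1$ bound follows the same split. For $1\leq|k|\leq A/\delta$ use $\sin^2(\pi k\delta/A)\leq(\pi k\delta/A)^2$ so each term is $\leq A^{-1}$, contributing $\ll A^{-1}\cdot(A/\delta)=\delta^{-1}$; for $|k|>A/\delta$ use $\sin^2\leq 1$, giving a term bounded by $A/(\pi k\delta)^2$ and a total contribution $\ll(A/\delta^2)\cdot(\delta/A)=\delta^{-1}$. Adding the $k=0$ term $A^{-1}$ yields $\|\F_A(\chi_\delta)\|_{\ell^1}\ll A^{-1}+\delta^{-1}$. There is no real obstacle here; the only mild care needed is to organize the two regimes (trivial bound vs.\ decay bound) consistently between the continuous and discrete settings so the two arguments can be written in parallel.
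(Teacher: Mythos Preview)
Your proposal is correct and follows essentially the same approach as the paper: direct integration (symmetry plus integration by parts) for the formulae, and the same low/high frequency split at $|\xi|\sim\delta^{-1}$ for the norm bounds. Your use of the identity $1-\cos(2\pi\xi\delta)=2\sin^2(\pi\xi\delta)$ is marginally cleaner than the paper's appeal to the Mean Value Theorem, and you also spell out the $\ell^1$ case that the paper leaves implicit, but the strategy is the same.
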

\begin{proof}
We prove the claims only for $\F$, the other assertion can be shown analogously. For $\xi\not=0$ we compute
\begin{align*}
\F(\chi_{\delta})(\xi)&=  \int_{-\delta}^{\delta} e^{2\pi i \xi x} \chi_{\delta}(x) \d x\\
&= \frac{2}{\delta} \int_0^{\delta} \cos \left( 2\pi \xi x\right) (1-\delta^{-1} x) \d x\\
&= \frac{1}{\xi \pi\delta} \left[ \sin \left( 2\pi \xi x\right) (1-\delta^{-1} x) \right]_0^{\delta} + \frac{1}{\xi\pi\delta^2} \int_0^{\delta} \sin \left( 2\pi \xi x\right) \d x\\
&= -\frac{1}{2\xi^2\pi^2\delta^2} \left[\cos \left( 2\pi \xi x\right)\right]_0^{\delta}. 
\end{align*}

The Intermediate Value Theorem yields
\begin{equation*}
\left|1-\cos \left( 2\pi \xi \delta\right)\right|\ll \zeta^2,
\end{equation*}
for some $\zeta\in \left[0, 2\pi \xi\delta \right]$. We get
\begin{equation*}
\int_{\R} |\F(\chi_{\delta})(\xi)| \d \xi \ll \int_{|\xi|\leq \delta^{-1}} \frac{1}{\xi^2 \delta^2} \left|1-\cos \left( 2\pi \xi\delta\right) \right| +\int_{|\xi|\geq \delta^{-1}} \frac{1}{\xi^2 \delta^2} 
\ll \delta^{-1}.
\end{equation*}

\end{proof}

For a finite set $B\subset R^d$ and weights $\Theta=(\theta_{\textbf{b}})$, recall the notation for exponential sums
\begin{equation*}
S_{B}(\textbf{x})=\left|\sum_{\textbf{b}\in B_N} e( \langle \textbf{b},\textbf{x} \rangle)\right|,
\end{equation*}
for $\textbf{x}\in \R^d$.\\
Also, we denote $d(B)=\max(|\textbf{b}|_{\infty} \;|\; \textbf{b}\in B)$.

\begin{proposition}\label{sparseequilem}
Let $B\subset \R^d$ be a finite set and $\Theta=(\theta_{\textbf{b}})$ be real (possibly negative) numbers. If $\phi$ has decaying twisted integrals in $L^2$ of rate $\gamma$, then, for each $f\in L^2_0\cap \B$ and $\textbf{a}\in \R^d$, 
\begin{equation}\label{sparseequilem1crates}
\begin{aligned}
&\frac{1}{\# B}\left|\left|\sum_{\textbf{b}\in B} \theta_{\textbf{b}} f(\phi_{\textbf{b}}(x)) e( \langle \textbf{a},\textbf{b}\rangle) \right|\right|_{L^2(\d\mu(x)}\\
& \ll \frac{1}{\# B}+  \max_{|\textbf{t}|_{\infty}<\delta, x\in X} |f(x)-f(\phi_{\textbf{t}}(x))| \max_{\textbf{b}\in B}|\theta_{\textbf{b}}| \\&
+ \frac{(d(B))^d}{\#B} \gamma_f(d(B)) \int_{\R^d} S_{B}^{\Theta} \left(\xi \right) \prod_{j=1}^d |\F(\chi_{\delta})(\xi_j)| \d\xi.
\end{aligned}
\end{equation}
and
\begin{equation}\label{sparseequilem1rates}
\begin{aligned}
&\frac{1}{\# B}\left| \left| \sum_{\textbf{b}\in B} \theta_{\textbf{b}} f(\phi_{\textbf{b}}(x)) e( \langle \textbf{a},\textbf{b}\rangle) \right| \right|_{L^2(X)}\\
& \ll \frac{1}{\# B}+  \max_{|\textbf{t}|_{\infty}<\delta, x\in X} |f(x)-f(\phi_{\textbf{t}}(x))| \max_{\textbf{b}\in B}|\theta_{\textbf{b}}|\\
&\;\;\; + \frac{d(B)^d}{\#B} \gamma_f(d(B)) \sum_{\textbf{k}=(k_1,...,k_d)\in \Z^d} S_{B}^{\Theta} \left(\frac{1}{d(B)}\textbf{k}\right) \prod_{j=1}^d |\F_{d(B)}(\chi_{\delta})(k_j)|.
\end{aligned}
\end{equation}
for all $\delta>0$.
\end{proposition}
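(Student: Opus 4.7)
My plan is to follow the Fourier-analytic strategy sketched just before the lemma: approximate the weighted discrete sum by a continuous integral against the bump function
\begin{equation*}
G_{\delta}(\textbf{t}) = \sum_{\textbf{b}\in B} \theta_{\textbf{b}}\, g_{\delta}(\textbf{t}-\textbf{b}),
\end{equation*}
then apply Lemma~\ref{l2twistl1lem} to that integral (with the twist $e(\langle\textbf{a},\textbf{t}\rangle)$ incorporated into the test function $\psi$), and finally compute the resulting $L^1$-norm of the Fourier transform explicitly via Lemma~\ref{fouriercoeffchilem}.

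The first step is the discrete-to-continuous comparison. Since $\int_{\R^d} g_\delta\, \d m = 1$ (which follows from $\F(\chi_\delta)(0) = 1$ in Lemma~\ref{fouriercoeffchilem}), a change of variables $\textbf{t} = \textbf{u}+\textbf{b}$ shows that
\begin{equation*}
\sum_{\textbf{b}\in B} \theta_{\textbf{b}} e(\langle \textbf{a}, \textbf{b}\rangle) f(\phi_{\textbf{b}}(x)) - \int_{\R^d} G_\delta(\textbf{t}) e(\langle \textbf{a}, \textbf{t}\rangle) f(\phi_{\textbf{t}}(x))\, \d m(\textbf{t})
\end{equation*}
equals $\sum_{\textbf{b}\in B} \theta_{\textbf{b}} \int g_\delta(\textbf{u}) \bigl[f(\phi_{\textbf{b}}(x)) - e(\langle \textbf{a}, \textbf{u}\rangle) f(\phi_{\textbf{u}+\textbf{b}}(x))\bigr]\,\d m(\textbf{u})$. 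Since $g_\delta$ is supported on $[-\delta, \delta]^d$, each summand is controlled (modulo a small phase contribution absorbed into the lower-order $\tfrac{1}{\#B}$ boundary term) by $|\theta_{\textbf{b}}|\max_{|\textbf{t}|_\infty < \delta,\, x \in X} |f(x) - f(\phi_{\textbf{t}}(x))|$, and dividing by $\#B$ yields the second error term of \eqref{sparseequilem1crates}.

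For the main term, I would apply Lemma~\ref{l2twistl1lem} with $\psi(\textbf{t}) := G_\delta(\textbf{t}) e(\langle \textbf{a}, \textbf{t}\rangle)$ and $T = d(B)+\delta$, after translating $B$ so that $G_\delta$ is supported in $[-T, T]^d$. Since modulation in the $\textbf{t}$ variable is translation in the Fourier variable, $\F(\psi)(\xi) = \F(G_\delta)(\xi - \textbf{a})$, so by translation invariance of Lebesgue measure
\begin{equation*}
\|\F(\psi)\|_{L^1(\R^d)} = \|\F(G_\delta)\|_{L^1(\R^d)}
\end{equation*}
is independent of $\textbf{a}$. Since $g_\delta(\textbf{t}) = \prod_j \chi_\delta(t_j)$ is a tensor product and translating $g_\delta$ by $\textbf{b}$ multiplies its Fourier transform by $e(-\langle \xi, \textbf{b}\rangle)$, one finds
\begin{equation*}
|\F(G_\delta)(\xi)| = \prod_{j=1}^d |\F(\chi_\delta)(\xi_j)|\cdot S_B^\Theta(\xi).
\end{equation*}
Inserting this into Lemma~\ref{l2twistl1lem} produces the third term in \eqref{sparseequilem1crates}, the factor $d(B)^d/\#B$ coming from the $(2T)^d$ normalization in the lemma together with the overall $1/\#B$.

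The inequality \eqref{sparseequilem1rates} is established by exactly the same argument, but using the Fourier series variant of Lemma~\ref{l2twistl1lem}, which is applicable because $G_\delta$ is supported in $[-A, A]^d$ for $A = d(B)$. The analogous factorization of $\F_A(G_\delta)$ then produces the stated sum over $\Z^d$ with $S_B^\Theta(\textbf{k}/d(B))$ and $\F_{d(B)}(\chi_\delta)(k_j)$. The main technical obstacle throughout is the placement of the twist $e(\langle \textbf{a}, \textbf{t}\rangle)$: putting it inside $\psi$ is precisely what forces $\|\F(\psi)\|_{L^1}$ to collapse to an $\textbf{a}$-independent quantity via translation invariance and thereby keeps the final estimate uniform in $\textbf{a}$; the price for this is that the comparison step must handle the resulting phase discrepancy carefully to avoid spawning a $\delta|\textbf{a}|_\infty\|f\|_\infty$-type error, which is where the delicate bookkeeping sits.
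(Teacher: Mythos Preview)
Your proposal follows essentially the same approach as the paper's proof: define $G_\delta$, compare the discrete sum to $\int G_\delta(\textbf{t}) e(\langle\textbf{a},\textbf{t}\rangle) f(\phi_{\textbf{t}}(x))\,\d m(\textbf{t})$, then invoke Lemma~\ref{l2twistl1lem} and compute $\|\F(G_\delta)\|_{L^1}$ via the factorization $|\F(G_\delta)(\xi)| = S_B^\Theta(\xi)\prod_j|\F(\chi_\delta)(\xi_j)|$. You are in fact slightly more explicit than the paper in two respects---you spell out that folding the twist into $\psi$ and using translation invariance of $\|\F(\cdot)\|_{L^1}$ is what makes the main term $\textbf{a}$-independent, and you flag the phase discrepancy $e(\langle\textbf{a},\textbf{u}\rangle)-1$ in the comparison step, which the paper's estimate \eqref{wmbncGerror} absorbs without comment.
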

\begin{proof}
We show only \eqref{sparseequilem1crates}, the other assertion can be shown analogously. For simplicity assume $\textbf{0}\in B$, otherwise the proof needs only minimal adjustment. Let 
\begin{equation*}
G_{\delta}(\textbf{t})=\sum_{\textbf{b}\in B} \theta_{\textbf{b}} g_{\delta}(\textbf{t}-\textbf{b}),
\end{equation*}
then
\begin{equation}\label{wmbncGerror}
\begin{aligned}
&\left| \int_{\tilde{B}} G_{\delta}(\textbf{t}) e( \langle \textbf{a}, \textbf{t}\rangle) f(\phi_{\textbf{t}}(x)) \d \textbf{t} - \sum_{\textbf{b}\in B} \theta_{\textbf{b}} f(\phi_{\textbf{b}}(x)) e( \langle \textbf{a},\textbf{b}\rangle) \right|\\
&\ll 1+ \max_{|\textbf{t}|_{\infty}<\delta, x\in X} |f(x)-f(\phi_{\textbf{t}}(x))| \#B \max_{\textbf{b}\in B}|\theta_{\textbf{b}}|,
\end{aligned}
\end{equation}
where\footnote{Since $\textbf{0}\in B$, we have $B\subset \tilde{B}$. If we drop the assumption $\textbf{0}\in B$, $B$ is still contained in a translate of $\tilde{B}$. In this case we only have to shift the phase, and essentially the same proof is still valid.} $\tilde{B}=[-d(B),d(B)]^d$.

We have $\F(G_{\delta})(\xi)=\sum_{\textbf{b}\in B} \theta_{\textbf{b}} e(- \langle \xi,\textbf{b}\rangle) \F(g_{\delta})(\xi)$, therefore
\begin{equation}\label{fouriercGeq1}
||\F(G_{\delta})||_{L^1} = \int_{\R^d} \left|\sum_{\textbf{b}\in B} \theta_{\textbf{b}} e( \langle \textbf{b},\xi \rangle)\right| |\F(g_{\delta})(\xi)|\d\xi.
\end{equation}
Together with \eqref{wmbncGerror} and Lemma \ref{l2twistl1lem} we obtain
\begin{equation*}
\begin{aligned}
&\frac{1}{\# B}\left| \left| \sum_{\textbf{b}\in B} \theta_{\textbf{b}} f(\phi_{\textbf{b}}(x)) e( \langle \textbf{a},\textbf{b}\rangle) \right| \right|_{L^2(X)}\\
& \ll \frac{1}{\# B}+  \max_{|\textbf{t}|_{\infty}<\delta, x\in X} |f(x)-f(\phi_{\textbf{t}}(x))| \max_{\textbf{b}\in B}|\theta_{\textbf{b}}| \\
&+ \frac{(d(B))^d}{\#B} \gamma_f(d(B)) \int_{\R^d} \left|\sum_{\textbf{b}\in B} \theta_{\textbf{b}} e( \langle \textbf{b},\xi \rangle)\right| |\F(g_{\delta})(\xi)| \d\xi.
\end{aligned}
\end{equation*} 
for $\delta>0$.
\end{proof}

The rates \eqref{sparseequilem1rates} can be useful when one has a good understanding of
\begin{equation*}
\left|\sum_{\textbf{b}\in B} \theta_{\textbf{b}} e(-\frac{1}{d(B)} \langle \textbf{k},\textbf{b}\rangle) \right|.
\end{equation*}
 This is interesting even in the simplest (non-trivial) case when $B= [-N,N]^d \cap \Z^d$ and $\theta_{\textbf{b}}\equiv 1$, 
and $\phi$ is an $\R^d$-action. We will use this case to show weak mixing and effective equidistribution of the restricted map.

\begin{lemma}\label{Nexpsumlem}
Let $B= [-N,N]^d \cap \Z^d$ and $\Theta=(\theta_{\textbf{b}})\equiv 1$, then, for $\delta\in (0,1)$, we have
\begin{equation}\label{Nexpsumlemclaim}
\sum_{\textbf{k}=(k_1,...,k_d)\in \Z^d} S_{B}^{\Theta} \left(\frac{1}{N}\textbf{k}\right) \prod_{j=1}^d |\F_{d(B)}(\chi_{\delta})(k_j)|\ll \delta^{-2d}.
\end{equation}
\end{lemma}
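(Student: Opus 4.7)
The plan is to exploit the fact that, with $B = [-N,N]^d \cap \Z^d$, $\Theta \equiv 1$, and $d(B) = N$, everything factors across the $d$ coordinates, so the $d$-dimensional claim reduces to a one-dimensional estimate. Specifically,
\begin{equation*}
S_B(\textbf{k}/N) = \prod_{j=1}^d \left| \sum_{n=-N}^N e(n k_j/N) \right|
\end{equation*}
is a product and $\prod_{j=1}^d |\F_N(\chi_\delta)(k_j)|$ is a product, so
\begin{equation*}
\sum_{\textbf{k} \in \Z^d} S_B(\textbf{k}/N) \prod_{j=1}^d |\F_N(\chi_\delta)(k_j)| = \left( \sum_{k \in \Z} D_N(k/N)\, |\F_N(\chi_\delta)(k)| \right)^{\! d},
\end{equation*}
where $D_N(\xi) := \left|\sum_{n=-N}^N e(n\xi)\right|$. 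I would then reduce to proving the one-dimensional bound $\sum_{k \in \Z} D_N(k/N) |\F_N(\chi_\delta)(k)| \ll \delta^{-1}$, which is actually stronger than the claim (it implies $\ll \delta^{-2}$ since $\delta < 1$).

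The key observation, sharper than the crude bound $D_N(k/N) \leq N/|r|$ (with $r$ the reduction of $k$ mod $N$ into $[-N/2, N/2]$) that one gets from the $\csc$-estimate, is the exact evaluation
\begin{equation*}
D_N(k/N) = \begin{cases} 2N+1 & \text{if } N \mid k, \\ 1 & \text{if } N \nmid k. \end{cases}
\end{equation*}
This follows from orthogonality of characters on $\Z/N\Z$: writing $\sum_{n=-N}^{N} e(nk/N) = \sum_{n=-N}^{N-1} e(nk/N) + e(k)$, the first sum runs over two full periods of the nontrivial character $n \mapsto e(nk/N)$ and hence vanishes, while $e(k) = 1$ for $k \in \Z$.

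Armed with this, the one-dimensional sum splits cleanly into
\begin{equation*}
\sum_{k \in \Z} D_N(k/N) |\F_N(\chi_\delta)(k)| = \sum_{k \in \Z} |\F_N(\chi_\delta)(k)| + 2N \sum_{m \in \Z} |\F_N(\chi_\delta)(Nm)|.
\end{equation*}
The first piece equals $\|\F_N(\chi_\delta)\|_{l^1} \ll N^{-1} + \delta^{-1}$ by Lemma \ref{fouriercoeffchilem}. For the second piece, since $\supp \chi_\delta \subset [-\delta,\delta] \subset [-N,N]$, a direct change of variables in the defining integral gives $\F_N(\chi_\delta)(Nm) = N^{-1} \F(\chi_\delta)(m)$, so $2N\sum_m|\F_N(\chi_\delta)(Nm)| = 2\sum_m |\F(\chi_\delta)(m)|$. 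A computation mirroring the proof of $\|\F(\chi_\delta)\|_{L^1(\R)} \ll \delta^{-1}$ in Lemma \ref{fouriercoeffchilem}, but summing over $\Z$ instead of integrating, with the bound $|\F(\chi_\delta)(m)| \ll \min(1, m^{-2}\delta^{-2})$ split at $|m| \sim \delta^{-1}$, yields $\sum_m |\F(\chi_\delta)(m)| \ll \delta^{-1}$. Combining the two pieces gives the one-dimensional bound $\ll \delta^{-1}$; raising to the $d$-th power gives $\ll \delta^{-d} \leq \delta^{-2d}$ since $\delta \in (0,1)$.

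The only delicate step is the exact evaluation $D_N(k/N) \in \{1,2N+1\}$. Without it, the naive sine-bound $D_N(k/N) \leq N/|r|$ produces an unwanted $\log N$ factor when summing over residue classes, so one would fail to obtain any bound uniform in $N$.
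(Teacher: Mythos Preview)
Your proof is correct and follows essentially the same approach as the paper: reduce to $d=1$ by factorisation, use the exact evaluation $D_N(k/N)\in\{1,2N+1\}$, and split the one-dimensional sum into $\|\F_N(\chi_\delta)\|_{l^1}$ plus the contribution from multiples of $N$. The only difference is that you bound the second piece more carefully (using $|\F(\chi_\delta)(m)|\ll\min(1,m^{-2}\delta^{-2})$ split at $|m|\sim\delta^{-1}$) to obtain $\ll\delta^{-1}$ in one dimension, whereas the paper uses the cruder $|1-\cos(2\pi k\delta)|\leq 2$ to get $\ll\delta^{-2}$; your sharper estimate yields $\ll\delta^{-d}$ overall, which of course still implies the stated $\ll\delta^{-2d}$.
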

\begin{proof}
It suffices to prove the statement for $d\!\!=\!\!1$, since, for 
$\textbf{k}=(k_1,...,k_d)
\in \Z^d$,
\begin{equation*}
S_{B}^{\Theta} \left(\frac{1}{N}\textbf{k}\right)=\prod_{j=1}^d \sum_{n=1}^{N} e\left( \frac{nk_j}{N} \right) .
\end{equation*}
So let $d=1$, then, for $k\in \Z$, we have
\begin{equation*}
S_N\left(\frac{k}{N}\right)=\left|\sum_{n=-N}^{N} e\left( \frac{nk}{N} \right)\right| =\begin{cases}
& 2N+1 \;\;\; \textit{if } k=jN \textit{ for some } j\in \Z,\\
& 1 \;\;\; \textit{otherwise.}
\end{cases},
\end{equation*}
Hence, using the Fourier coefficient from Lemma \ref{fouriercoeffchilem},
\begin{align*}
&\sum_{k\in \Z} S_N\left(\frac{k}{N}\right) |\F_{N}(\chi_{\delta})(k)|=\delta^{-1}+N\sum_{k\in \Z} |\F_{N}(\chi_{\delta})(Nk)|\\
&\ll \delta^{-1}+N\sum_{k\geq 1} \frac{2}{Nk^2\pi^2\delta^2} \left|1-\cos \left( 2\pi k\delta\right)\right|
\ll \delta^{-2}. 
\end{align*}
\end{proof}

\begin{remark}\label{SNrem}
We can insist on using Fourier transform instead of Fourier coefficients in this case, i.e. compute
\begin{equation*}
\int_{\R} S_{N} \left(\xi \right) |\F(\chi_{\delta})(\xi)| \d\xi
\end{equation*}
instead of \eqref{Nexpsumlemclaim}. However then one obtains a worse bound, since
\begin{equation*}
S_{N} \left(\xi \right)=\left|\sum_{n=1}^{N} e\left( n\xi \right)\right| \ll \min(N,d(\xi,\Z)^{-1}),
\end{equation*}
the same calculations as above only yield
\begin{equation*}
\int_{\R} S_{N} \left(\xi \right) |\F(\chi_{\delta})(\xi)| \d\xi \ll \delta^{-2} \log(N).
\end{equation*}
Of the examples we consider, this is the only one where it makes a difference whether we use Fourier transform or Fourier series.
\end{remark}

\begin{proposition}\label{time1twistprop}
If $\phi$ has decaying twisted integrals in $L^2$ of rate $\gamma$, then, for each $f\in L^2_0\cap \B$ and $\textbf{a}\in \R^d$, we have
\begin{equation}\label{time1twistdecrates}
\begin{aligned}
\frac{1}{(2N)^d} & \left| \left| \sum_{\textbf{k}\in [-N,N]^d \cap \Z^d} f(\phi_{\textbf{k}}(x)) e( \langle \textbf{a},\textbf{k}\rangle) \right| \right|_{L^2(X)}\\
& \ll \frac{1}{N^d}+  \max_{|\textbf{t}|_{\infty}<\delta, x\in X} |f(x)-f(\phi_{\textbf{t}}(x))| + \delta^{-2d} \gamma_f(N). 
\end{aligned}
\end{equation}
\end{proposition}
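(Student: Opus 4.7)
The proposition is essentially a direct corollary of Proposition \ref{sparseequilem} combined with Lemma \ref{Nexpsumlem}, specialised to the cube $B = [-N,N]^d \cap \Z^d$ with trivial weights $\Theta \equiv 1$.

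The plan is to apply the Fourier series version \eqref{sparseequilem1rates} of Proposition \ref{sparseequilem} to this specific choice of $B$ and $\Theta$. With this choice one has $\#B = (2N+1)^d \sim N^d$, $d(B) = N$, and $\max_{\textbf{b}\in B}|\theta_\textbf{b}| = 1$. Substituting these quantities into \eqref{sparseequilem1rates} gives the three terms
\[
\frac{1}{\#B} \sim \frac{1}{N^d}, \qquad \max_{|\textbf{t}|_\infty < \delta,\, x\in X} |f(x) - f(\phi_\textbf{t}(x))|, \qquad \frac{d(B)^d}{\#B}\, \gamma_f(d(B)) \sum_{\textbf{k}\in\Z^d} S_B^\Theta\!\left(\tfrac{\textbf{k}}{N}\right) \prod_{j=1}^d |\F_{d(B)}(\chi_\delta)(k_j)|,
\]
with the prefactor $d(B)^d / \#B = N^d/(2N+1)^d = O(1)$.

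The remaining task is to bound the exponential-sum factor, and this is exactly the content of Lemma \ref{Nexpsumlem}, which yields
\[
\sum_{\textbf{k}\in\Z^d} S_B^\Theta\!\left(\tfrac{\textbf{k}}{N}\right) \prod_{j=1}^d |\F_{N}(\chi_\delta)(k_j)| \ll \delta^{-2d}.
\]
Combining this with $\gamma_f(d(B)) = \gamma_f(N)$ gives the third term $\delta^{-2d}\gamma_f(N)$ in \eqref{time1twistdecrates}, completing the proof.

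There is no real obstacle: everything was set up in the preceding lemmas precisely so that this step is a mechanical substitution. The only thing worth flagging is the choice of the Fourier series version rather than the Fourier transform version of Proposition \ref{sparseequilem} — as noted in Remark \ref{SNrem}, using $\F$ in place of $\F_N$ would cost a superfluous $\log N$ factor, so it is essential to apply \eqref{sparseequilem1rates} (not \eqref{sparseequilem1crates}) together with Lemma \ref{Nexpsumlem}.
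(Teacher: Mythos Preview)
Your proposal is correct and follows exactly the paper's own proof: apply the Fourier-series estimate \eqref{sparseequilem1rates} of Proposition \ref{sparseequilem} with $B=[-N,N]^d\cap\Z^d$ and $\Theta\equiv 1$, then invoke Lemma \ref{Nexpsumlem} to bound the exponential-sum factor by $\delta^{-2d}$. Your additional observation about why \eqref{sparseequilem1rates} is preferred over \eqref{sparseequilem1crates} (avoiding the $\log N$ loss noted in Remark \ref{SNrem}) is a nice clarification that the paper leaves implicit.
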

\begin{proof}
Let $B= [-N,N]^d \cap \Z^d$ and $\Theta=(\theta_{\textbf{b}})\equiv 1$, then $d(B)=N$ and by Lemma \ref{Nexpsumlem}, for $\delta\in (0,1)$, we have
\begin{equation*}
\sum_{\textbf{k}=(k_1,...,k_d)\in \Z^d} S_{B}^{\Theta} \left(\frac{1}{N}\textbf{k}\right) \prod_{j=1}^d |\F_{N}(\chi_{\delta})(k_j)|\ll \delta^{-2d}.
\end{equation*}
Using this in \eqref{sparseequilem1rates} we obtain
\begin{align*}
\frac{1}{(2N)^d}&\left| \left| \sum_{\textbf{b}\in B} f(\phi_{\textbf{b}}(x)) e( \langle \textbf{a},\textbf{b}\rangle) \right| \right|_{L^2(X)} \\
& \ll \frac{1}{N^d}+  \max_{|\textbf{t}|_{\infty}<\delta, x\in X} |f(x)-f(\phi_{\textbf{t}}(x))| + \delta^{-2d} \gamma_f(N).
\end{align*}

\end{proof}

\begin{lemma}\label{wmtime1wmlem}
Suppose condition (N1) is satisfied. If $\phi^{(\textbf{1})}$ weak mixing of rate $\alpha=\alpha^{\phi^{(\textbf{1})}}$, then for $f,g\in \B\cap L^2_0$ and for each $N,K\geq 1$ we have
\begin{equation}\label{wmtime1wmrates}
\begin{aligned}
&\left|\frac{1}{(2N)^d} \int_{[-N,N]^d} | \langle f, g(\phi_{\textbf{t}}) \rangle | \d\textbf{t}-\frac{1}{K^d} \sum_{\textbf{k}\in [0,K-1]^d} \alpha_{f,g(\phi_{\frac{1}{K}\textbf{k}})}(N) \right|\\
&\ll ||f||_{L^{\infty}} ||g||_{\B}  \max_{\textbf{t}\in \left[0,\frac{1}{K}\right]^d, x\in X} d(x,\phi_{\textbf{t}}(x))^{\rho}.
\end{aligned}
\end{equation}
\end{lemma}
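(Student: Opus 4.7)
The plan is to partition $[-N,N]^d$ into sub-cubes of side $1/K$ whose corners are the points $\textbf{n}+\textbf{k}/K$ with $\textbf{n}\in [-N,N-1]^d\cap\Z^d$ and $\textbf{k}\in [0,K-1]^d\cap\Z^d$, replace the integrand $|\langle f,g\circ\phi_{\textbf{t}}\rangle|$ on each sub-cube by its value at the corner, and then regroup by $\textbf{k}$ to recognise the defining sum of $\alpha_{f,g\circ\phi_{\textbf{k}/K}}(N)$ for $\phi^{(\textbf{1})}$. The replacement error is controlled by the Hölder regularity of $g$ furnished by (N1) together with the modulus of continuity of $\phi$.

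First, I would write
\begin{equation*}
\int_{[-N,N]^d} |\langle f, g\circ\phi_{\textbf{t}}\rangle|\, \d\textbf{t} = \sum_{\textbf{n}}\sum_{\textbf{k}}\int_{[0,1/K]^d} |\langle f, g\circ\phi_{\textbf{n}+\textbf{k}/K+\textbf{s}}\rangle|\, \d\textbf{s}.
\end{equation*}
For each triple $(\textbf{n},\textbf{k},\textbf{s})$, using $||A|-|B||\leq |A-B|$ and the pointwise Hölder bound $|g(\phi_{\textbf{s}}(y))-g(y)|\leq \|g\|_{C^{\rho}}d(\phi_{\textbf{s}}(y),y)^{\rho}$ applied at $y=\phi_{\textbf{n}+\textbf{k}/K}(x)$, bounded against $|f(x)|\leq \|f\|_{L^{\infty}}$ and integrated over $x$, yields
\begin{equation*}
\big||\langle f, g\circ\phi_{\textbf{n}+\textbf{k}/K+\textbf{s}}\rangle| - |\langle f, g\circ\phi_{\textbf{n}+\textbf{k}/K}\rangle|\big| \ll \|f\|_{L^{\infty}}\|g\|_{\B} \max_{|\textbf{t}|_{\infty}<1/K,\,x\in X} d(x,\phi_{\textbf{t}}(x))^{\rho},
\end{equation*}
since (N1) gives $\|g\|_{C^{\rho}}\ll \|g\|_{\B}$.

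Integrating this bound over $\textbf{s}\in [0,1/K]^d$ (volume $K^{-d}$), summing over the $(2N)^d K^d$ pairs $(\textbf{n},\textbf{k})$, and dividing by $(2N)^d$, the total error has precisely the form on the right of \eqref{wmtime1wmrates}, while the main term becomes
\begin{equation*}
\frac{1}{K^d}\sum_{\textbf{k}\in [0,K-1]^d}\frac{1}{(2N)^d}\sum_{\textbf{n}\in [-N,N-1]^d\cap\Z^d} |\langle f, (g\circ\phi_{\textbf{k}/K})\circ \phi^{(\textbf{1})}_{\textbf{n}}\rangle|,
\end{equation*}
which for each fixed $\textbf{k}$ differs from $\alpha_{f,g\circ\phi_{\textbf{k}/K}}(N)$ only by the $O(N^{d-1})$ integer points in $[-N,N]^d\setminus[-N,N-1]^d$; this discrepancy is at most $O(N^{-1})\|f\|_{L^{\infty}}\|g\|_{\B}$ by Cauchy--Schwarz and is absorbed in the stated error (in the nontrivial regime). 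The only real subtlety is the decision to bound the difference of inner products using $\|f\|_{L^{\infty}}$ rather than by Cauchy--Schwarz on $\langle f,g_1-g_2\rangle$; this is essential to extract a \emph{pointwise} rather than $L^2$ modulus of continuity of $\phi$, which is what appears on the right-hand side of \eqref{wmtime1wmrates}.
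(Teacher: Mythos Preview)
Your argument is correct and follows essentially the same route as the paper: partition $[-N,N]^d$ into cubes of side $1/K$ indexed by $(\textbf{n},\textbf{k})$, replace the integrand by its corner value using the $C^\rho$ bound on $g$ from (N1) together with $\|f\|_{L^\infty}$, and regroup in $\textbf{k}$ to recover $\alpha_{f,g\circ\phi_{\textbf{k}/K}}(N)$. The paper's proof is identical in substance, only more terse; it too glosses over the boundary mismatch between the cube decomposition and the summation range in the definition of $\alpha^{\phi^{(\textbf{1})}}$.

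One small caveat: your claim that the $O(N^{-1})\|f\|_{L^\infty}\|g\|_{\B}$ boundary term is ``absorbed in the stated error'' is not literally true, since the right-hand side of \eqref{wmtime1wmrates} depends only on $K$ and can be made arbitrarily small by taking $K\to\infty$ with $N$ fixed. Strictly speaking the lemma should carry an additional $O(N^{-1})$ term; this is harmless in the only place the lemma is used (the proof of Theorem~\ref{wmtime1thm}, where it contributes at worst $N^{-1}$ to a bound that is already $N^{-\kappa}$), and the paper's own proof has exactly the same gap.
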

\begin{proof}
For $K\geq 1$, to be chosen later, set
\begin{equation*}
g_{\textbf{k}}=g(\phi_{\frac{1}{K} \textbf{k}}) \;\;\; \textbf{k}\in [0,K-1]^d\cap \Z^d.
\end{equation*}
We have
\begin{align*}
&\left|\frac{1}{K^d} \sum_{\textbf{k}\in [0,K-1]^d} \sum_{\textbf{n}\in [-N,N]^d}  | \langle f, g_{\textbf{k}}(\phi_{\textbf{n}}) \rangle | - \int_{\textbf{t}\in [-T,T]^d} | \langle f, g(\phi_{\textbf{t}}) \rangle | \d\textbf{t} \right| \\
&\ll \frac{1}{K^d} \sum_{\textbf{k}\in [0,K-1]^d} \sum_{\textbf{n}\in [-N,N]^d} \left| | \langle f, g_{\textbf{k}}(\phi_{\textbf{n}}) \rangle | - K^d \int_{\textbf{t}\in [0,\frac{1}{K}]^d} | \langle f, g(\phi_{\textbf{t}+ \textbf{n} + \frac{1}{K}\textbf{k}}) \rangle | \d\textbf{t} \right|\\
&\ll ||f||_{L^{\infty}} ||g||_{\B}  \max_{\textbf{t}\in \left[0,\frac{1}{K}\right]^d, x\in X} d(x,\phi_{\textbf{t}}(x))^{\rho} N^d. 
\end{align*}
The conclusion \eqref{wmtime1wmrates} is immediate.
\end{proof}

We are now in a position to prove Theorem \ref{wmtime1thm}.

\begin{proof}[Proof of Theorem \ref{wmtime1thm}]
We already showed that $\phi$ is weak mixing if and only if $\phi^{(\textbf{1})}$ is. So let us focus on the rates of decay.

From now on, suppose $\phi$ is $\rho'$-Hölder continuous, and assumption (N) is satisfied.

(i) First assume that $\phi^{(\textbf{1)}}$ is polynomially weak mixing, say $\alpha^{\phi^{(\textbf{1})}}_{f,g}(N)\ll ||f||_{\B} ||g||_{\B} N^{-\kappa}$ for $f,g\in \B$. For $N,K\geq 1$, Lemma \ref{wmtime1wmlem} yields
\begin{align*}
\alpha^{\phi}_{f,g}(N)& \ll  ||f||_{\B} ||g||_{\B} K^{-\rho \rho'} + \frac{1}{K^d} \sum_{\textbf{k}\in [0,K-1]^d} \alpha_{f,g(\phi_{\frac{1}{K}\textbf{k}})}(N)\\
&\ll ||f||_{\B} ||g||_{\B}  ( K^{-\rho \rho'} + N^{-\kappa}),
\end{align*}
letting $K\rightarrow \infty$ yields $\alpha^{\phi}_{f,g}(N) \ll ||f||_{\B} ||g||_{\B} N^{-\kappa}$. Clearly, a fortiori, it also holds that
\begin{equation*}
\alpha^{\phi}_{f,g}(T) \ll ||f||_{\B} ||g||_{\B} T^{-\kappa} \;\;\;T>0.
\end{equation*}

(ii) On the other hand let $\phi$ be weak mixing, by Theorem \ref{wmtwistthm} twisted integrals decay. For $f\in L^2_0\cap \B$, Proposition \ref{time1twistprop} yields 
\begin{equation}\label{twisttime1ineq}
\gamma^{\phi^{(\textbf{1})}}_f(N) \ll \frac{1}{N^d}+ ||f||_{\B} \max_{|\textbf{t}|<\delta, x\in X} d(x,\phi_{\textbf{t}}(x))^{\rho} + \delta^{-2d} \gamma^{\phi}_f(N).
\end{equation}
Since $\phi$ is polynomially weak mixing, Theorem \ref{wmtwistthm} yields $\gamma^{\phi}_f(T)\ll ||f||_{\B} T^{-\kappa}$ for some $\kappa>0$. keeping in mind the Hölder continuity of $\phi$, \eqref{twisttime1ineq} becomes
\begin{equation*}
\gamma^{\phi^{(\textbf{1})}}_f(N) \ll \frac{1}{N^d}+ \delta^{\rho \rho'} ||f||_{\B} + \delta^{-2d} ||f||_{\B} N^{-\kappa}.
\end{equation*}
Letting $\delta=N^{-\frac{\kappa}{2d+\rho \rho'}}$ this becomes
\begin{equation*}
\gamma^{\phi^{(\textbf{1})}}_f(N) \ll ||f||_{\B} N^{-\frac{\rho \rho'\kappa}{2d+\rho \rho'}},
\end{equation*}
and applying Theorem \ref{wmtwistthm} for the $\Z^d$-action $\phi^{(\textbf{1})}$ yields that $\phi^{(\textbf{1})}$ is polynomially weak mixing. 
\end{proof}

\section{Pointwise Twisted Integrals}

This subsection will be devoted to proving the main proposition showing 
the effective rate of
decay for twisted integrals in Proposition \ref{vprop1}. 
The argument is analogous to the one that first appeared in \cite{venkateshtwist}, which shows polynomial decay of twisted integrals for horocycle flow. 

\begin{lemma}\label{averagetwistlem}
For $f\in L^{\infty}$ and $\textbf{a}\in \R^d$ (or $[0,1)^d$) we have
\begin{align*}
\left|
\begin{aligned}\frac{1}{(2T)^d} &\int_{[-T,T]^d} f(\phi_{\textbf{t}} x) e( \langle \textbf{a} , \textbf{t}\rangle) \d m(\textbf{t})\\
& - \frac{1}{(2T)^d(2H)^d} \int_{[-T,T]^d} \int_{[-H,H]^d} f(\phi_{\textbf{t}+\textbf{h}} x) e( \langle \textbf{a} , \textbf{t}+\textbf{h}\rangle) \d m(\textbf{h}) \d m(\textbf{t})
\end{aligned}
\right| \ll \frac{H}{T} ||f||_{L^{\infty}}
\end{align*}
for every $x\in X$ and $T,H>0$.
\end{lemma}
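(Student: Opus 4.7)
The plan is to exploit translation invariance of Lebesgue measure, so that the averaging over $\textbf{h}\in[-H,H]^d$ reduces, after a change of variables, to controlling a boundary contribution which is small compared to the bulk of $[-T,T]^d$ when $H \leq T$. (For $H>T$, both averages are trivially bounded by $\|f\|_{L^\infty}$, and the claimed bound $\frac{H}{T}\|f\|_{L^\infty}\geq \|f\|_{L^\infty}$ holds for free.)

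I would set $F(\textbf{t}) := f(\phi_{\textbf{t}}x)\,e(\langle \textbf{a},\textbf{t}\rangle)$, so $|F|\leq \|f\|_{L^\infty}$, and use Fubini to rewrite the double integral as $\frac{1}{(2H)^d}\int_{[-H,H]^d} A(\textbf{h})\,\d m(\textbf{h})$ where
\begin{equation*}
A(\textbf{h}) := \frac{1}{(2T)^d}\int_{[-T,T]^d} F(\textbf{t}+\textbf{h})\,\d m(\textbf{t}) = \frac{1}{(2T)^d}\int_{[-T,T]^d+\textbf{h}} F(\textbf{s})\,\d m(\textbf{s}),
\end{equation*}
the second equality coming from the substitution $\textbf{s}=\textbf{t}+\textbf{h}$. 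Writing $A(\textbf{0})$ for the first term in the statement, the quantity I need to bound is exactly $A(\textbf{0}) - \frac{1}{(2H)^d}\int_{[-H,H]^d} A(\textbf{h})\,\d m(\textbf{h})$, which by the triangle inequality is at most $\sup_{\textbf{h}\in[-H,H]^d} |A(\textbf{h}) - A(\textbf{0})|$. So it suffices to estimate $A(\textbf{h})-A(\textbf{0})$ pointwise in $\textbf{h}$.

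The geometric core of the argument is the estimate on the symmetric difference: when $|\textbf{h}|_\infty\leq H \leq T$, the set $[-T,T]^d \triangle ([-T,T]^d + \textbf{h})$ is contained in the union of $d$ slabs adjacent to the faces $\{t_i = \pm T\}$, each of thickness at most $|h_i|$. A coordinatewise union bound then yields
\begin{equation*}
m\bigl([-T,T]^d \triangle ([-T,T]^d + \textbf{h})\bigr) \ll |\textbf{h}|_\infty\, T^{d-1} \leq H T^{d-1}.
\end{equation*}
Since $F$ is bounded by $\|f\|_{L^\infty}$, this gives $|A(\textbf{h}) - A(\textbf{0})| \ll \|f\|_{L^\infty} \cdot H/T$ uniformly in $\textbf{h}\in[-H,H]^d$, which is precisely the claimed bound after averaging in $\textbf{h}$.

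I do not foresee a real obstacle here. The only point that needs a bit of care is the dimensional symmetric-difference calculation, which is a routine union bound over the $d$ coordinate directions; everything else is Fubini plus a change of variables. Note that the phase factor $e(\langle\textbf{a},\textbf{t}\rangle)$ plays no role in the estimate beyond being incorporated into $F$.
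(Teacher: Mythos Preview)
Your argument is correct and follows essentially the same route as the paper: reduce via Fubini and the substitution $\textbf{s}=\textbf{t}+\textbf{h}$ to controlling $m\bigl([-T,T]^d \triangle ([-T,T]^d+\textbf{h})\bigr)$, then use the slab bound $\ll T^{d-1}|\textbf{h}|$. The only cosmetic differences are that the paper averages $|A(\textbf{h})-A(\textbf{0})|$ over $\textbf{h}$ rather than taking the supremum, and uses $|\textbf{h}|_1$ instead of $|\textbf{h}|_\infty$; neither changes anything.
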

\begin{proof}
For every $x\in X$ and $T,H>0$
\begin{align*}
&\left|
\begin{aligned}\frac{1}{(2T)^d} &\int_{[-T,T]^d} f(\phi_{\textbf{t}} x) e( \langle \textbf{a} , \textbf{t}\rangle) \d m(\textbf{t})\\
& - \frac{1}{(2T)^d(2H)^d} \int_{[-T,T]^d} \int_{[-H,H]^d} f(\phi_{\textbf{t}+\textbf{h}} x) e( \langle \textbf{a} , \textbf{t}+\textbf{h}\rangle) \d m(\textbf{h}) \d m(\textbf{t})
\end{aligned}
\right|\\
&\ll \frac{1}{T^dH^d} \int_{[-H,H]^d} \left|\int_{[-T,T]^d} f(\phi_{\textbf{t}} x) e( \langle \textbf{a} , \textbf{t}\rangle) \d m(\textbf{t})  - \int_{[-T,T]^d} f(\phi_{\textbf{t}+\textbf{h}} x) e( \langle \textbf{a} , \textbf{t}+\textbf{h}\rangle) \d m(\textbf{t}) \right| \d m(\textbf{h}) \\
&\ll \frac{1}{T^dH^d} \int_{[-H,H]^d} \left|\int_{[-T,T]^d} f(\phi_{\textbf{t}} x) e( \langle \textbf{a} , \textbf{t}\rangle) \d m(\textbf{t})  - \int_{[-T,T]^d+\textbf{h}} f(\phi_{\textbf{t}} x) e( \langle \textbf{a} , \textbf{t}\rangle) \d m(\textbf{t}) \right| \d m(\textbf{h}) \\
&\ll \frac{1}{T^dH^d} ||f||_{L^{\infty}} \int_{[-H,H]^d} m([-T,T]^d\Delta ([-T,T]^d+\textbf{h})) \d m(\textbf{h}) \\
& \ll \frac{1}{T^dH^d} ||f||_{L^{\infty}} \int_{[-H,H]^d} 2 T^{d-1} |\textbf{h}|_1 \d m(\textbf{h}) \\
&\ll \frac{H}{T} ||f||_{L^{\infty}}.
\end{align*}

\end{proof}

\begin{proof}[Proof of Proposition \ref{vprop1}]
By Lemma \ref{averagetwistlem}, it is enough to show
\begin{align*}
&\left| \frac{1}{(2T)^d(2H)^d} \int_{[-T,T]^d} \int_{[-H,H]^d} f(\phi_{\textbf{t}+\textbf{h}} x) e( \langle \textbf{a} ,\textbf{t} + \textbf{h} \rangle) \d m(\textbf{h}) \d m(\textbf{t}) \right| \\
& \ll \alpha_{f,f}(H)^{\frac{1}{2}} + \frac{1}{(2H)^d} \left( \int_{[-H,H]^d} \int_{[-H,H]^d} \beta_{f(\phi_{\textbf{h}_1}) \overline{f(\phi_{\textbf{h}_2})},x}(\textbf{t})\d m(\textbf{h}_1)  \d m(\textbf{h}_2) \right)^{\frac{1}{2}}. 
\end{align*}

Let $x\in X$, using Cauchy-Schwartz inequality with functions $1$ and $ \int_{[-H,H]^d} f(\phi_{\textbf{t}+\textbf{h}} x) e( \langle \textbf{a} ,\textbf{t} +\textbf{h} \rangle) \d m(\textbf{h}) $ we obtain
\begin{align*}
&\frac{1}{(2T)^d (2H)^d} \left|  \int_{[-T,T]^d} \int_{[-H,H]^d} f(\phi_{\textbf{t}+\textbf{h}} x) e( \langle \textbf{a} ,\textbf{t} +\textbf{h} \rangle) \d m(\textbf{h}) \d m(\textbf{t}) \right| \\
& \ll \frac{1}{T^{\frac{d}{2}} H^d} \left| \left( \int_{[-T,T]^d} \int_{[-H,H]^d} \int_{[-H,H]^d} e( \langle \textbf{a} ,\textbf{h}_1-\textbf{h}_2 \rangle) f(\phi_{\textbf{t}+\textbf{h}_1} x) \overline{f(\phi_{\textbf{t}+\textbf{h}_2} x)} \d m(\textbf{h}_1)  \d m(\textbf{h}_2) \d m(\textbf{t}) \right)^{\frac{1}{2}} \right| \\
& \ll \frac{1}{T^{\frac{d}{2}} H^d} \left( \int_{[-H,H]^d} \int_{[-H,H]^d} e( \langle \textbf{a} ,\textbf{h}_1-\textbf{h}_2 \rangle) \left| \int_{[-T,T]^d} f(\phi_{\textbf{t}+\textbf{h}_1} x) \overline{f(\phi_{\textbf{t}+\textbf{h}_2} x)} \d m(\textbf{t})  \right| \d m(\textbf{h}_1)  \d m(\textbf{h}_2)  \right)^{\frac{1}{2}} \\
& \ll \frac{1}{T^{\frac{d}{2}} H^d} \left( \int_{[-H,H]^d} \int_{[-H,H]^d} \left| \int_{[-T,T]^d} f(\phi_{\textbf{t}+\textbf{h}_1} x) \overline{f(\phi_{\textbf{t}+\textbf{h}_2} x)} \d m(\textbf{t})  \right| \d m(\textbf{h}_1)  \d m(\textbf{h}_2)  \right)^{\frac{1}{2}} \\
& \ll \frac{1}{T^{\frac{d}{2}} H^d} \left( 
\int_{[-H,H]^d} \int_{[-H,H]^d} \begin{aligned}
&\left| T^d \int_X f(\phi_{\textbf{h}_1} x) \overline{f(\phi_{\textbf{h}_2} x)} \d\mu(x) \right| \\
& + T^d \beta_{f(\phi_{\textbf{h}_1}) \overline{f(\phi_{\textbf{h}_2})},x}(T)
\end{aligned} \d m(\textbf{h}_1)  \d m(\textbf{h}_2) 
\right)^{\frac{1}{2}} \\
&\ll \frac{1}{H^d} \left( \int_{[-H,H]^d} \int_{[-H,H]^d} \left| \langle f(\phi_{\textbf{h}_1}),f(\phi_{\textbf{h}_2}) \rangle \right| \d m(\textbf{h}_1)  \d m(\textbf{h}_2) \right)^{\frac{1}{2}}\\
&+  \frac{1}{H^d} \left( \int_{[-H,H]^d} \int_{[-H,H]^d} \beta_{f(\phi_{\textbf{h}_1}) \overline{f(\phi_{\textbf{h}_2})},x}(\textbf{t})\d m(\textbf{h}_1)  \d m(\textbf{h}_2) \right)^{\frac{1}{2}}\\
&\ll \frac{1}{H^{\frac{d}{2}}} \left( \int_{[-2H,2 H]^d} \left| \langle f(\phi_{\textbf{u}}),f \rangle \right| \d m(\textbf{u})\right)^{\frac{1}{2}}\\
&+ \frac{1}{H^d} \left( \int_{[-H,H]^d} \int_{[-H,H]^d} \beta_{f(\phi_{\textbf{h}_1}) \overline{f(\phi_{\textbf{h}_2})},x}(T)\d m(\textbf{h}_1)  \d m(\textbf{h}_2) \right)^{\frac{1}{2}},
\end{align*}

where in the last step we substituted $\textbf{u}=\textbf{h}_1-\textbf{h}_2$. 
For the prior term, use \eqref{wmratesdef} to obtain 
\begin{equation*}
\frac{1}{H^{\frac{d}{2}}} \left( \int_{[-H,H]^d} \left| \langle f(\phi_{\textbf{u}}),f \rangle \right| \right)^{\frac{1}{2}} \ll \alpha_{f,f}(H)^{\frac{1}{2}}. 
\end{equation*}
\end{proof}

To obtain polynomial bounds, we first have to analyse $\beta_{f,x}(\phi_{\textbf{h}})(T)$ for fixed $T>0$. This will help us control the term
\begin{equation*}
\int_{[-H,H]^d} \int_{[-H,H]^d} \beta_{f(\phi_{\textbf{h}_1}) \overline{f(\phi_{\textbf{h}_2})},x}(T)\d m(\textbf{h}_1)  \d m(\textbf{h}_2).
\end{equation*}

\begin{lemma}\label{betalem1}
Let $f\in L^2_0\cap L^{\infty}$, $x\in X$ and $\textbf{h}\in \R^d$ (or $\Z^d$) then
\begin{equation*}
\beta_{f(\phi_{\textbf{h}}),x}(\textbf{t}) \ll |\textbf{h}|_{\infty} T^{-1} 
||f||_{L^{\infty} }
+ \beta_{f,x}(T+|\textbf{h}|_{\infty}) \;\;\; \textit{for } T\geq |h|_{\infty}.
\end{equation*}
\end{lemma}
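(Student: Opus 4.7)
The strategy is to convert the ergodic average of $f \circ \phi_{\textbf{h}}$ over $[-T,T]^d$ into an ergodic average of $f$ itself over the translated cube $[-T,T]^d + \textbf{h}$, and then compare this translated cube with the symmetric cube $[-T',T']^d$, where $T' := T + |\textbf{h}|_{\infty}$. The boundary contribution is controlled by the $L^{\infty}$-bound on $f$, and the rescaled integral over $[-T',T']^d$ produces the $\beta_{f,x}(T')$ term.

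First, since $\mu$ is $\phi$-invariant, $\int_X f(\phi_{\textbf{h}}) \d\mu = \int_X f \d\mu$, and the substitution $\textbf{u}=\textbf{t}+\textbf{h}$ combined with the group law yields
\begin{align*}
\beta_{f(\phi_{\textbf{h}}),x}(T) = \left| \frac{1}{(2T)^d} \int_{[-T,T]^d + \textbf{h}} f(\phi_{\textbf{u}}(x)) \d m(\textbf{u}) - \int_X f \d\mu \right|.
\end{align*}
For $T \geq |\textbf{h}|_{\infty}$ one has $[-T,T]^d + \textbf{h} \subset [-T',T']^d$, and the measure of the difference is bounded by $(2T')^d - (2T)^d \ll |\textbf{h}|_{\infty} T^{d-1}$. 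Using $||f||_{L^{\infty}}$, one can replace the domain of integration from $[-T,T]^d + \textbf{h}$ by $[-T',T']^d$ at a cost of $O(|\textbf{h}|_{\infty} T^{-1} ||f||_{L^{\infty}})$.

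Next, rescale by writing
\begin{align*}
\frac{1}{(2T)^d} \int_{[-T',T']^d} f(\phi_{\textbf{u}}(x)) \d m(\textbf{u}) = \left(\frac{T'}{T}\right)^d \cdot \frac{1}{(2T')^d} \int_{[-T',T']^d} f(\phi_{\textbf{u}}(x)) \d m(\textbf{u}),
\end{align*}
subtract $\int_X f \d\mu$, and observe that the inner integral differs from $\int_X f \d\mu$ by at most $\beta_{f,x}(T')$ in absolute value. The prefactor satisfies $1 \leq (T'/T)^d \leq 2^d$ (since $|\textbf{h}|_{\infty} \leq T$) and $(T'/T)^d - 1 \ll |\textbf{h}|_{\infty}/T$; combined with $|\int_X f \d\mu| \leq ||f||_{L^{\infty}}$, this yields a further error of size $O(|\textbf{h}|_{\infty} T^{-1} ||f||_{L^{\infty}})$. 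One then assembles the claim by the triangle inequality.

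There is no real obstacle in this argument; it is essentially a careful accounting of the boundary error caused by translating a large cube. The only delicate point is ensuring the prefactor $(T'/T)^d$ stays bounded by a constant independent of $\textbf{h}$, which is precisely why the hypothesis $T \geq |\textbf{h}|_{\infty}$ is imposed.
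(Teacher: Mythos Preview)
Your argument is correct and follows essentially the same route as the paper's proof, which simply records the inequality
\[
\left|\int_{[-T,T]^d} f(\phi_{\textbf{h}+\textbf{t}}(x)) \d m(\textbf{t}) - \int_{[-T',T']^d} f(\phi_{\textbf{t}}(x)) \d m(\textbf{t})\right| \ll |\textbf{h}|_{\infty} T^{d-1} ||f||_{L^{\infty}}
\]
and declares the claim immediate. Your write-up just makes the rescaling step explicit; note that since $f\in L^2_0$ one has $\int_X f\,\d\mu=0$, so the extra error you track from $\bigl((T'/T)^d-1\bigr)\int_X f\,\d\mu$ actually vanishes.
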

\begin{proof}
The claim is immediate from
\begin{equation*}
\left|\int_{[-T,T]^d}  f(\phi_{\textbf{h}+\textbf{t}}(x)) \d m(\textbf{t}) - \int_{[-T-|\textbf{h}|_{\infty},T+|\textbf{h}|_{\infty}]^d} f(\phi_{\textbf{t}}(x)) \d m(\textbf{t})\right| \ll |\textbf{h}|_{\infty} T^{d-1} ||f||_{L^{\infty}}.
\end{equation*}

\end{proof}

\begin{proof}[Proof of Corollary \ref{quantpolytwistcor}]
First, we claim 
\begin{equation}\label{quantpolytwistpropclaim}
\begin{aligned}
\int_{[-H,H]^d} \int_{[-H,H]^d} & \beta_{f(\phi_{\textbf{h}_1}) \overline{f(\phi_{\textbf{h}_2})},x}(T)\d m(\textbf{h}_1)  \d m(\textbf{h}_2)\\
& \ll T^{-1} ||f||_{L^{\infty}}^2 H^{3d} + ||f||_{\B}^2 T^{-\delta_2} H^{d(\mathcal{K}+1)}
\end{aligned} 
\end{equation}
for $T\geq H$. Indeed, for each $\textbf{h}_1,\textbf{h}_2\in [-H,H]^d$, by Lemma \ref{betalem1}, we have
\begin{equation*}
\beta_{f(\phi_{\textbf{h}_1}) \overline{f(\phi_{\textbf{h}_2})},x}(T) \ll |\textbf{h}_2|_{\infty} T^{-1} ||f||_{L^{\infty}}^2 + \beta_{f(\phi_{\textbf{h}_1-\textbf{h}_2}) \overline{f},x}(T+|\textbf{h}_2|_{\infty}).
\end{equation*}
Hence 
\begin{align*}
\int_{[-H,H]^d} \int_{[-H,H]^d} & \beta_{f(\phi_{\textbf{h}_1}) \overline{f(\phi_{\textbf{h}_2})},x}(T)\d m(\textbf{h}_1)  \d m(\textbf{h}_2)\\
&\ll T^{-1} ||f||_{L^{\infty}}^2 H^d \int_{[-H,H]^d} |\textbf{h}|_{\infty} \d m(\textbf{h})\\
& + \int_{[-H,H]^d} \int_{[-H,H]^d} \beta_{f(\phi_{\textbf{h}_1-\textbf{h}_2}) \overline{f},x}(T+|\textbf{h}_2|_{\infty})\d m(\textbf{h}_1)  \d m(\textbf{h}_2)\\
&\ll T^{-1} ||f||_{L^{\infty}}^2 H^{3d} + ||f||_{\B}^2 T^{-\delta_2} \int_{[-H,H]^d} \int_{[-H,H]^d} (|\textbf{h}_1-\textbf{h}_2|_1)^{\mathcal{K}} \d m(\textbf{h}_1) \d m(\textbf{h}_2)\\
&\ll T^{-1} ||f||_{L^{\infty}}^2 H^{3d} + ||f||_{\B}^2 T^{-\delta_2} H^{d(\mathcal{K}+1)}.
\end{align*}

By Proposition \ref{vprop1} and \eqref{quantpolytwistpropclaim} we have
\begin{align*}
&\left|\frac{1}{(2T)^d} \int_{[-T,T]^d} f(\phi_{\textbf{t}} x) e( \langle\textbf{a} , \textbf{t}\rangle) \d m(\textbf{t}) \right| \\
&\ll \frac{H}{T} ||f||_{L^{\infty}} + ||f||_{\B} H^{-\frac{\delta_1}{2}} + \frac{1}{(2H)^d} \left( T^{-1} ||f||_{L^{\infty}}^2 H^{3d} + ||f||_{\B}^2 T^{-\delta_2} H^{d(\mathcal{K}+1)} \right)^{\frac{1}{2}}\\
&\ll ||f||_{\B}\left(T^{-1} H + H^{-\frac{\delta_1}{2}} + T^{-\frac{1}{2}} H^{\frac{d}{2}} + T^{-\frac{\delta_2}{2}} H^{\frac{d\mathcal{K}}{2}}\right).
\end{align*}
The Ansatz $H=T^{\kappa}$ yields
\begin{equation*}
\left|\frac{1}{(2T)^d} \int_{[-T,T]^d} \psi(\textbf{t}) f(\phi_t x) \d m(\textbf{t}) \right| 
\ll ||f||_{\B} \left(T^{-1+\kappa} + T^{-\kappa\frac{\delta_1}{2}} + T^{-\frac{1}{2} + \kappa \frac{d}{2}} + T^{-\frac{\delta_2}{2} + \kappa \frac{\mathcal{K}d}{2}} \right).
\end{equation*}

The claim follows with
\begin{equation*}
\delta=\max_{\kappa>0} \min \left(1-\kappa,\kappa\frac{\delta_1}{2},\frac{1}{2} - \kappa \frac{d}{2},\frac{\delta_2}{2} - \kappa \frac{\mathcal{K}d}{2}\right).
\end{equation*}
In order to compute $\delta$, consider the corresponding lines given by
\begin{equation*}
I(\kappa)=1-\kappa, \; II(\kappa)=\kappa\frac{\delta_1}{2}, \; III(\kappa)=\frac{1}{2}- \kappa \frac{d}{2}, \; \textit{ and } \; IV(\kappa)=\frac{\delta_2}{2} - \kappa \frac{\mathcal{K}d}{2},
\end{equation*}
in $\R^2$. Since only $II$ is increasing and all the other lines are decreasing, but have positive displacement, $\max_{\kappa>0} \min(I(\kappa), II(\kappa), III(\kappa), IV(\kappa))$ can be found at the first intersection of $II$ with another line. The points of intersection of $I$, $III$ and $IV$ with $II$ are
\begin{equation*}
\kappa_1=\frac{1}{1+\frac{\delta_1}{2}}=\frac{2}{2+\delta_1}, \;\kappa_3= \frac{\frac{1}{2}}{\frac{d}{2}+\frac{\delta_1}{2}}=\frac{1}{d+\delta_1}, \; \textit{ and } \kappa_4= \frac{\frac{\delta_2}{2}}{\frac{\mathcal{K}d}{2}+ \frac{\delta_1}{2}}=\frac{\delta_2}{\mathcal{K}d+\delta_1}.
\end{equation*}
Thus
\begin{equation*}
\delta=\min\left(\frac{\delta_1}{2+\delta_1}, \frac{\delta_1}{2(d+\delta_1)}, \frac{\delta_1\delta_2}{2(\mathcal{K}d+\delta_1)}\right).
\end{equation*}
\end{proof}

\section{Effective equidistribution}

Here we obtain the pointwise analogues of the results of Section \ref{restrictsec}. This 
gives much stronger estimates, even if the proofs remain almost the same.

\begin{lemma}\label{pointtwistl1lem}
For $x\in X$, $f\in L^2_0$, $T>0$ and $\psi\in C^{\infty}_c$ with $\supp \psi \subset [-A,A]^d$, 
\begin{equation*}
\frac{1}{(2T)^d} \left| \int_{[-T,T]^d} f(\phi_{\textbf{t}}(x)) \psi(\textbf{t}) \d\textbf{t} \right| \ll ||\F_A(\psi)||_{l^1} \gamma_{f,x}(T).
\end{equation*}
\end{lemma}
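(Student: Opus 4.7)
The plan is to mimic the Fourier-series half of Lemma \ref{l2twistl1lem}, but the argument is actually simpler: where the $L^2$ statement needed Lemma \ref{l2intineqlem} to pull a norm through an integral, the pointwise statement requires only the ordinary triangle inequality in $\C$.

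Concretely, I would first expand $\psi$ as its Fourier series on $[-A,A]^d$,
\begin{equation*}
\psi(\textbf{t}) = \sum_{\textbf{k}\in \Z^d} \F_A(\psi)(\textbf{k})\, e\!\left(\tfrac{1}{A} \langle \textbf{k}, \textbf{t}\rangle\right), \qquad \textbf{t}\in [-A,A]^d.
\end{equation*}
Since $\psi\in C^{\infty}_c$, its Fourier coefficients decay rapidly, so the series converges absolutely and uniformly and any sum--integral interchange below is automatic. Substituting into the left-hand side of the lemma and using Fubini (at least when $T\leq A$, so that $[-T,T]^d\subset[-A,A]^d$ and the series genuinely represents $\psi$ on the domain of integration) gives
\begin{equation*}
\int_{[-T,T]^d} f(\phi_{\textbf{t}}(x))\,\psi(\textbf{t})\,\d m(\textbf{t}) = \sum_{\textbf{k}\in \Z^d} \F_A(\psi)(\textbf{k}) \int_{[-T,T]^d} f(\phi_{\textbf{t}}(x))\, e\!\left(\tfrac{1}{A} \langle \textbf{k}, \textbf{t}\rangle\right) \d m(\textbf{t}).
\end{equation*}

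Taking absolute values, applying the triangle inequality in the sum, and using that $\gamma_{f,x}(T)$ is defined as a supremum over all $\textbf{a}\in \R^d$ (in particular over $\textbf{a}=\textbf{k}/A$), each inner integral is bounded by $(2T)^d \gamma_{f,x}(T)$, uniformly in $\textbf{k}$. Pulling this out of the sum and dividing by $(2T)^d$ yields the claimed estimate with constant $\|\F_A(\psi)\|_{l^1}$.

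The only real subtlety is the case $T>A$: the Fourier series represents $\psi$ only on $[-A,A]^d$, but the support condition $\supp\psi\subset[-A,A]^d$ lets one replace $[-T,T]^d$ by $[-A,A]^d$ as the domain of integration at no cost. Running the same Fourier-series argument on $[-A,A]^d$ produces $\gamma_{f,x}(A)\cdot (2A)^d$, and the cube-size mismatch with the stated right-hand side is absorbed into the implicit $\ll$ constant (since $\gamma_{f,x}$ is a supremum over frequencies and the left-hand side depends only on $\supp\psi$). I do not expect any new difficulty beyond what already arose in the $L^2$ analogue, since the main technical point there --- moving a norm across an integral via Lemma \ref{l2intineqlem} --- simply does not occur in the pointwise setting.
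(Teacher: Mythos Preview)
Your approach is exactly the paper's: expand $\psi$ in its Fourier series on $[-A,A]^d$, swap sum and integral, apply the triangle inequality, and bound each resulting twisted integral by $(2T)^d\gamma_{f,x}(T)$ using the definition of $\gamma_{f,x}$ as a supremum over frequencies. The paper's proof is in fact terser than yours and does not discuss the case $T>A$ at all; your attempted fix there (absorbing $(A/T)^d\gamma_{f,x}(A)$ into $\gamma_{f,x}(T)$) is not actually justified in general, but this is immaterial since in the only use of the lemma (Proposition~\ref{sparseequilem1point}) one has $T=A=d(B)$.
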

\begin{proof}
By triangle inequality we have
\begin{align*}
\left| \int_{[-T,T]^d} f(\phi_{\textbf{t}}(x)) \psi(\textbf{t}) \d\textbf{t} \right|& \leq \left| \int_{[-T,T]^d} \sum_{\textbf{k}\in \Z^d} \F_A(\psi)(\textbf{k}) e(\frac{1}{A} \langle \textbf{k}, \textbf{t} \rangle) f(\phi_{\textbf{t}}(x)) \d\textbf{t} \right| \\
& \leq \sum_{\textbf{k}\in \Z^d} |\F_A(\psi)(\textbf{k})| \left|\int_{[-T,T]^d} e(\frac{1}{A} \langle \textbf{k}, \textbf{t} \rangle) f(\phi_{\textbf{t}}(x)) \d\textbf{t} \right|.
\end{align*}

\end{proof}

\begin{proposition}\label{sparseequilem1point}
Let $B\subset \R^d$ be a finite set and $\Theta=(\theta_{\textbf{b}})$ be real (possibly negative) numbers. If $\phi$ has decaying twisted integrals at some $x\in X$ of rate $\gamma_{f,x}$, then, for each $f\in L^2_0$ and $\textbf{a}\in \R^d$, 
\begin{align*}
\frac{1}{\# B}\left|\sum_{\textbf{b}\in B} \theta_{\textbf{b}} f(\phi_{\textbf{b}}(x)) e( \langle \textbf{a},\textbf{b}\rangle) \right| & \ll \frac{1}{\# B}+  \max_{|\textbf{t}|_{\infty}<\delta, x\in X} |f(x)-f(\phi_{\textbf{t}}(x))| \max_{\textbf{b}\in B}|\theta_{\textbf{b}}|\\
& + \frac{(d(B))^d}{\#B} \gamma_{f,x}(d(B)) \int_{\R^d} S_{B}^{\Theta} \left(\xi \right) \prod_{j=1}^d |\F(\chi_{\delta})(\xi_j)| \d\xi.
\end{align*}

and
\begin{align*}
\frac{1}{\# B} \left| \sum_{\textbf{b}\in B} \theta_{\textbf{b}} f(\phi_{\textbf{b}}(x)) e( \langle \textbf{a},\textbf{b}\rangle) \right|& \ll \frac{1}{\# B}+  \max_{|\textbf{t}|_{\infty}<\delta, x\in X} |f(x)-f(\phi_{\textbf{t}}(x))| \max_{\textbf{b}\in B}|\theta_{\textbf{b}}|\\
& + \frac{(d(B))^d}{\#B} \gamma_{f,x}(d(B)) \sum_{\textbf{k}=(k_1,...,k_d)\in \Z^d} S_{B}^{\Theta} \left(\frac{1}{d(B)}\textbf{k}\right) \prod_{j=1}^d |\F_{d(B)}(\chi_{\delta})(k_j)|.
\end{align*}
for all $\delta>0$.
\end{proposition}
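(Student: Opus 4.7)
The proposal is to mimic the proof of Proposition \ref{sparseequilem} verbatim, swapping the $L^2$ bound of Lemma \ref{l2twistl1lem} for the pointwise bound of Lemma \ref{pointtwistl1lem} (and its easy Fourier-transform counterpart). So the only new ingredient is really the observation that Lemma \ref{pointtwistl1lem} has an obvious analogue where $\mathcal{F}_A$ is replaced by $\mathcal{F}$: by Fubini,
\begin{equation*}
\left|\int_{[-T,T]^d} f(\phi_{\textbf{t}}(x)) \psi(\textbf{t}) \d m(\textbf{t})\right|
\leq \int_{\R^d} |\mathcal{F}(\psi)(\xi)| \left|\int_{[-T,T]^d} e(\langle \xi, \textbf{t}\rangle) f(\phi_{\textbf{t}}(x)) \d m(\textbf{t})\right| \d m(\xi) \ll (2T)^d \gamma_{f,x}(T) \|\mathcal{F}(\psi)\|_{L^1},
\end{equation*}
whenever $\psi \in L^2$ with $\mathcal{F}(\psi) \in L^1$.

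With this in hand, I would proceed exactly as in the proof of Proposition \ref{sparseequilem}. Assume without loss of generality that $\textbf{0} \in B$ (otherwise shift the phase of $\textbf{a}$), set $\tilde{B} = [-d(B), d(B)]^d$, and introduce the smeared-out weighted measure
\begin{equation*}
G_\delta(\textbf{t}) = \sum_{\textbf{b}\in B} \theta_{\textbf{b}} g_\delta(\textbf{t} - \textbf{b}),
\end{equation*}
where $g_\delta(\textbf{t}) = \prod_{j=1}^d \chi_\delta(t_j)$. Since $g_\delta$ is an $L^1$-normalized bump of width $\delta$ supported at the origin and the translates $g_\delta(\cdot - \textbf{b})$ are essentially disjoint, the Riemann-sum type estimate
\begin{equation*}
\left| \int_{\tilde{B}} G_\delta(\textbf{t}) e(\langle \textbf{a}, \textbf{t}\rangle) f(\phi_{\textbf{t}}(x)) \d m(\textbf{t}) - \sum_{\textbf{b}\in B} \theta_\textbf{b} f(\phi_\textbf{b}(x)) e(\langle \textbf{a}, \textbf{b}\rangle) \right|
\ll 1 + (\#B) \max_{\textbf{b}\in B} |\theta_\textbf{b}| \max_{|\textbf{t}|_\infty < \delta,\, x \in X} |f(x) - f(\phi_\textbf{t}(x))|,
\end{equation*}
follows from reading $\int g_\delta(\textbf{t}-\textbf{b}) e(\langle \textbf{a}, \textbf{t}\rangle) f(\phi_\textbf{t}(x)) \d m(\textbf{t})$ as a convolution-type average of $f(\phi_\cdot(x))e(\langle \textbf{a}, \cdot\rangle)$ near $\textbf{b}$, then comparing with the value at $\textbf{b}$ itself; the contribution of the $1$ accounts for the fact that the endpoint $\textbf{b}$ can lie on the boundary of $\tilde B$.

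Next, apply the $L^1$-type bound above (or Lemma \ref{pointtwistl1lem} in the Fourier-series case) to $\psi = G_\delta \cdot e(\langle \textbf{a}, \cdot\rangle)$ restricted to $\tilde B$, using that $d(B)$ plays the role of $T$. Since
\begin{equation*}
\mathcal{F}(G_\delta)(\xi) = \left(\sum_{\textbf{b}\in B} \theta_\textbf{b} e(-\langle \xi, \textbf{b}\rangle)\right) \prod_{j=1}^d \mathcal{F}(\chi_\delta)(\xi_j),
\end{equation*}
and similarly for $\mathcal{F}_{d(B)}$, one gets
\begin{equation*}
\|\mathcal{F}(G_\delta \cdot e(\langle \textbf{a}, \cdot\rangle))\|_{L^1} = \int_{\R^d} S_B^\Theta(\xi - \textbf{a}) \prod_{j=1}^d |\mathcal{F}(\chi_\delta)(\xi_j)| \d m(\xi),
\end{equation*}
and the twist $\textbf{a}$ disappears after a change of variables (it only translates the exponential-sum argument, which we bound by a supremum anyway). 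Dividing by $\#B$ and combining the two estimates yields both inequalities claimed in the proposition.

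There is no genuine obstacle: the only subtlety is the boundary-effect bookkeeping in the Riemann-sum step (handled by the harmless additive $1$ and by carrying the assumption $\textbf{0}\in B$), and the trivial cosmetic difference between the Fourier-transform and Fourier-series routes, which is dictated by whether the exponential sum $S_B^\Theta$ is better controlled at rational frequencies $\textbf{k}/d(B)$ or on all of $\R^d$.
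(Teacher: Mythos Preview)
Your proposal is correct and matches the paper's own proof, which simply reads ``Analogous to Proposition \ref{sparseequilem}.'' One cosmetic slip: in your displayed formula for $\|\mathcal{F}(G_\delta\cdot e(\langle \textbf{a},\cdot\rangle))\|_{L^1}$ both factors should be shifted by $\textbf{a}$ (not just $S_B^\Theta$), but after the change of variables this is irrelevant and the conclusion stands.
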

\begin{proof}
Analogous to Proposition \ref{sparseequilem}.
\end{proof}

For $B=[-N,N]^d\cap \Z^d$ and $\Theta\equiv 1$, analogously to above, we obtain

\begin{proposition}\label{pointtime1twistprop}
If $\phi$ has decaying twisted integrals at $x$ of rate $\gamma_{f,x}$, then, for each $f\in L^2_0\cap \B$ and $\textbf{a}\in \R^d$, 
\begin{equation}\label{pointtime1twistdecrates}
\frac{1}{N^d} \left| \sum_{\textbf{k}\in [-N,N]^d \cap \Z^d} f(\phi_{\textbf{k}}(x)) e( \langle \textbf{a},\textbf{k}\rangle) \right| \ll \frac{1}{N^d}+  \max_{|\textbf{t}|_{\infty}<\delta, x\in X} |f(x)-f(\phi_{\textbf{t}}(x))| + \delta^{-2d} \gamma_{f,x}(N).
\end{equation}
\end{proposition}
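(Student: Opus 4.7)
The plan is to follow exactly the template used for the $L^2$ version Proposition \ref{time1twistprop}, but with Proposition \ref{sparseequilem1point} replacing Proposition \ref{sparseequilem}. Concretely, I would set $B=[-N,N]^d\cap \Z^d$ and $\Theta=(\theta_{\textbf{b}})\equiv 1$, so that $d(B)=N$ and $\# B\sim N^d$.

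With these choices the second estimate in Proposition \ref{sparseequilem1point} (the one using Fourier series of $\chi_\delta$ on the period $d(B)=N$) gives
\begin{equation*}
\frac{1}{N^d}\Bigl|\sum_{\textbf{k}\in B} f(\phi_{\textbf{k}}(x)) e(\langle\textbf{a},\textbf{k}\rangle)\Bigr|
\ll \frac{1}{N^d} + \max_{|\textbf{t}|_\infty<\delta,\,y\in X} |f(y)-f(\phi_{\textbf{t}}(y))|
 + \gamma_{f,x}(N)\sum_{\textbf{k}\in \Z^d} S_{B}\!\Bigl(\tfrac{1}{N}\textbf{k}\Bigr)\prod_{j=1}^{d} |\F_N(\chi_\delta)(k_j)|
\end{equation*}
for every $\delta>0$, where the factor $(d(B))^d/\# B\sim 1$ has been absorbed into the implicit constant.

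The key ingredient is now Lemma \ref{Nexpsumlem}, which is purely a statement about $S_B$ and the Fourier coefficients of the bump $\chi_\delta$ and thus applies identically in the pointwise setting; it yields
\begin{equation*}
\sum_{\textbf{k}\in \Z^d} S_{B}\!\Bigl(\tfrac{1}{N}\textbf{k}\Bigr)\prod_{j=1}^{d} |\F_N(\chi_\delta)(k_j)| \ll \delta^{-2d}.
\end{equation*}
Substituting this into the previous display produces exactly \eqref{pointtime1twistdecrates}. I do not expect any genuine obstacle here: the only difference compared to the proof of Proposition \ref{time1twistprop} is that the $L^2(X)$ norm has been traded for the pointwise absolute value, and Proposition \ref{sparseequilem1point} has already absorbed that change, so the argument is purely mechanical.
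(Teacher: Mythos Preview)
Your proposal is correct and matches the paper's own proof, which simply says ``Analogous to Proposition \ref{time1twistprop}.'' You have spelled out exactly that analogy: specialize Proposition \ref{sparseequilem1point} to $B=[-N,N]^d\cap\Z^d$, $\Theta\equiv 1$, and invoke Lemma \ref{Nexpsumlem} to bound the Fourier-coefficient sum by $\delta^{-2d}$.
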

\begin{proof}
Analogous to Proposition \ref{time1twistprop}.
\end{proof}

\begin{proof}[Proof of Theorem \ref{unithm}]
Let $f\in L^1_0$ be uniformly continuous and bounded. By Proposition \ref{vprop1}, for every $H>0$, we have
\begin{equation*}
\gamma_{f,x}(T) \ll  \frac{H}{T} ||f||_{L^{\infty}} + \alpha_{f,f}(H)^{\frac{1}{2}} 
+ \frac{1}{H^d} \left( \int_{[-H,H]^d} \int_{[-H,H]^d} \beta_{f(\phi_{\textbf{h}_1}) \overline{f(\phi_{\textbf{h}_2})},x}(T)\d m(\textbf{h}_1)  \d m(\textbf{h}_2) \right)^{\frac{1}{2}}.
\end{equation*}
We first claim that $\gamma_{f,x}(T) \rightarrow 0$ as $T\rightarrow \infty$. 

Indeed, for $\epsilon>0$, first let $H>0$ be so big that $\alpha_{f,f}(H)<\epsilon^2$. Let $T_0'$ be so big that\footnote{ This is possible since the map $(\textbf{h}_1,\textbf{h}_2,T)\mapsto \beta_{f(\phi_{\textbf{h}_1}) \overline{f(\phi_{\textbf{h}_2})},x}(T)$ is continuous.}
$\beta_{f(\phi_{\textbf{h}_1}) \overline{f(\phi_{\textbf{h}_2})},x}(T) < \epsilon^2 \;\;\;\forall \textbf{h}_1,\textbf{h}_2\in [-H,H]^d, T>T_0'$, then we have $\gamma_{f,x}(T)\ll \epsilon$ for $T>T_0=\max(T'_0, \epsilon^{-1} H ||f||_{L^{\infty}})$.
Hence, $\gamma_{f,x}(T) \rightarrow 0$ as $T\rightarrow \infty$. \\
Setting $\textbf{a}=\textbf{0}$ in Proposition \ref{pointtime1twistprop} with yields\footnote{Because $\max_{|\textbf{t}|_{\infty}<\delta, x\in X} |f(x)-f(\phi_{\textbf{t}}(x))|\rightarrow 0$ as $\delta>0$.}
\begin{equation}\label{unithmeq1}
\frac{1}{N^d} \left| \sum_{\textbf{k}\in [-N,N]^d \cap \Z^d} f(\phi_{\textbf{k}}(x)) \right|\rightarrow 0\;\;\;\as{N},
\end{equation}
which shows the first conclusion.

If $\phi$ is weak mixing and uniquely ergodic, then $B_{\phi}(\mu)=X$. By what we have just shown $B_{\phi^{(\textbf{1})}}(\mu)=X$ and $\phi^{(\textbf{1})}$ is also uniquely ergodic.

The 'furthermore' part follows directly from Corollary \ref{quantpolytwistcor} and 
Proposition \ref{pointtime1twistprop} with $\textbf{a} = \textbf{0}$.
\end{proof}

\begin{proof}[Proof of Theorem \ref{classthm}]
The set of invariant probability measures is convex, and its extreme points are the invariant and ergodic probability measures, furthermore from Theorem \ref{unithm} it follows that whenever $\mu$ is an invariant and ergodic probability measure for $\phi$, then it is also invariant and ergodic for $\phi^{(\textbf{1})}$.\\
Therefore, it suffices to show the following: If $\mu$ is an invariant and ergodic probability measure for $\phi^{(\textbf{1})}$, then it is also invariant and ergodic for $\phi$. Note that in particular the basin $B_{\phi^{(\textbf{1})}}(\mu)\not= \emptyset$ is non-empty, let $x\in B_{\phi^{(\textbf{1})}}(\mu)$ be a $\phi^{(\textbf{1})}$-generic point for $\mu$.

If there is an invariant and ergodic probability measure $\nu$ for $\phi$ such that $x\in B_{\phi}(\nu)$, then by Theorem \ref{unithm} it holds that $x\in B_{\phi^{(\textbf{1})}}(\nu)$, i.e. $x$ is $\phi^{(\textbf{1})}$-generic point for $\nu$. It follows that $\mu=\nu$.

If $x$ is not $\phi$-generic for any measure, then there are\footnote{The family of probability measures $\nu_t=\frac{1}{(2T)^d} \int_{[-T,T]^d} \delta_{\phi_{\textbf{t}}(x)} \d \textbf{t}$, does not converge. But by compactness $\{\nu_t\}_{t>0}$ has accumulation points, say there are invariant and ergodic probability measures $\nu^{(1)}$ and $\nu^{(2)}$ for $\phi$, as well as sequences $\s{T^{(1)}}{n}$ and $\s{T^{(2)}}{n}$ such that
\begin{equation*}
\nu_{T^{(1)}_n} \Rightarrow \nu^{(1)} \;\;\;\as{n}, \; \textit{ while }\; \nu_{T^{(2)}_n} \Rightarrow \nu^{(2)} \;\;\;\as{n},
\end{equation*}
where $\Rightarrow$ denotes weak convergence. We may even assume that $T^{(1)}_n, T^{(2)}_n\in\N$.}
invariant and ergodic probability measures $\nu^{(1)}$ and $\nu^{(2)}$ for $\phi$, as well as sequences $\s{T^{(1)}}{n}$ and $\s{T^{(2)}}{n}$ of natural numbers such that 
\begin{equation*}
\frac{1}{(2T^{(1)}_n)^d} \int_{[-T^{(1)}_n,T^{(1)}_n]^d} f(\phi_{\textbf{t}}(x)) \d \textbf{t} \rightarrow \int_X f \d\nu^{(1)} \;\;\;\as{n},
\end{equation*}
and
\begin{equation*}
\frac{1}{(2T^{(2)}_n)^d} \int_{[-T^{(2)}_n,T^{(2)}_n]^d} f(\phi_{\textbf{t}}(x)) \d \textbf{t} \rightarrow \int_X f \d\nu^{(2)} \;\;\;\as{n},
\end{equation*}
for all uniformly continuous bounded functions $f:X\rightarrow\R$. We claim that 
\begin{equation}\label{classeq1}
\frac{1}{(2T^{(1)}_n)^d} \sum_{\textbf{k}\in[-T^{(1)}_n,T^{(1)}_n]^d\cap\Z^d} f(\phi_{\textbf{k}}(x)) \rightarrow \int_X f \d\nu^{(1)} \;\;\;\as{n},
\end{equation}
and
\begin{equation*}
\frac{1}{(2T^{(2)}_n)^d} \sum_{\textbf{k}\in[-T^{(2)}_n,T^{(2)}_n]^d\cap \Z^d} f(\phi_{\textbf{k}}(x)) \rightarrow \int_X f \d\nu^{(2)} \;\;\;\as{n},
\end{equation*}
this will then contradict $x\in B_{\phi^{(\textbf{1})}}(\mu)$. We only show \eqref{classeq1}.

Denote $f^{(1)}=f-\int_X f \d\nu^{(1)}$, applying Proposition \ref{vprop1} yields
\begin{align*}
\gamma_{f^{(1)},x}(T^{(1)}_n) & \ll  \frac{H}{T^{(1)}_n} ||f^{(1)}||_{L^{\infty}} + \alpha_{f^{(1)},f^{(1)}}(H)^{\frac{1}{2}} \\
&+ \frac{1}{H^d} \left( \int_{[-H,H]^d} \int_{[-H,H]^d} \beta_{f^{(1)}(\phi_{\textbf{h}_1}) \overline{f^{(1)}(\phi_{\textbf{h}_2})},x}(T^{(1)}_n)\d m(\textbf{h}_1)  \d m(\textbf{h}_2) \right)^{\frac{1}{2}},
\end{align*}
for every $H>0$ and $n\geq 1$. As in the proof of Theorem \ref{unithm} it follows that $\gamma_{f^{(1)},x}(T^{(1)}_n)\rightarrow 0$ as $n \rightarrow 0$, and Proposition \ref{pointtime1twistprop} with $\textbf{a}=\textbf{0}$ yields 
\begin{equation*}
\frac{1}{(2T^{(1)}_n)^d} \left| \sum_{\textbf{k}\in [-T^{(1)}_n,T^{(1)}_n]^d \cap \Z^d} f^{(1)}(\phi_{\textbf{k}}(x)) \right|\rightarrow 0\;\;\;\as{n},
\end{equation*}
which is equivalent to \eqref{classeq1}.
\end{proof}

\begin{remark}
The proof of Theorem \ref{classthm} shows in fact the following stronger result: for every $x\in X$ and every continuous and bounded function $f:X\rightarrow\R$ it holds that
\begin{equation*}
\frac{1}{(2N)^d} \left| \int_{[-N,N]^d} f(\phi_{\textbf{t}}(x)) \d \textbf{t} - \int_{\textbf{k}\in [-N,N]^d\cap \Z^d} f(\phi_{\textbf{k}}(x)) \right| \rightarrow 0 \;\;\;\as{N}.
\end{equation*}
Indeed, for every subsequence $\s{N}{j}$ there is a further subsequence $(N_{j_l})_{l\geq 1}$ such that 
\begin{equation*}
\frac{1}{(2N_{j_l})^d} \int_{[-N_{j_l},N_{j_l}]^d} \delta_{\phi_{\textbf{t}}(x)} \d \textbf{t} \Rightarrow \mu \;\;\;\as{j},
\end{equation*}
for some invariant and ergodic measure $\mu$, where $\Rightarrow$ denotes weak convergence. The proof of Theorem \ref{classthm} then yields
\begin{equation*}
\frac{1}{(2N_{j_l})^d} \sum_{\textbf{k}\in [-N_{j_l},N_{j_l}]^d \cap \Z^d} \delta_{\phi_{\textbf{k}}(x)} \d \Rightarrow \mu \;\;\;\as{j},
\end{equation*}
and it follows that
\begin{equation*}
\frac{1}{(2N_{j_l})^d} \left| \int_{[-N_{j_l},N_{j_l}]^d} f(\phi_{\textbf{t}}(x)) \d \textbf{t} - \int_{\textbf{k}\in [-N_{j_l},N_{j_l}]^d\cap \Z^d} f(\phi_{\textbf{k}}(x)) \right| \rightarrow 0 \;\;\;\as{j}.
\end{equation*}
\end{remark}

\section{The sequence $n^{1+\epsilon}$} \label{1+epssec}

In this section, we shall investigate the sequence $\s{a}{n}=(n^{1+\epsilon})_{n\geq 0}$ and higher dimensional analogues.\\
For small enough $\epsilon_1,...,\epsilon_d>0$ we consider 
\begin{equation*}
B_N=\{ (n_1^{1+\epsilon_1},...,n_d^{1+\epsilon_d}) | n_j\in [0,N-1]^d\cap \Z^d \forall j=1,..,d\}
\end{equation*}
First we consider the case $d=1$, i.e. $\{n^{1+\epsilon} \;|\; n=0,...,N-1\}$ for some $\epsilon>0$. 

As in \cite{venkateshtwist}, the trick is to approximate $(n^{1+\epsilon_j})_{n=0}^{N-1}$ by arithmetic progressions on large intervals. Let $a>1$ to be chosen later and, for $j = 1,...,J(N)$ with $J(N)=\lceil N^{\frac{1}{a}} \rceil$, denote $k_j=\lceil j^{a} \rceil$.\\
For each $j = 1,...,J(N)$ let $B'_{N,j}=\{b'_n \;|\; k_{j-1} \leq n \leq \min(N,k_j)-1\}$, where
\begin{equation*}
b'_n=k_j^{1+\epsilon}+ (1+\epsilon) k_j^{\epsilon} (n-k_j) , \;\;\; \textit{ for } k_j\leq n <k_{j+1}, j\geq 1.
\end{equation*}
Note that for fixed $N_0\geq 1$ and all $n\geq 1$ we have
\begin{equation*}
|(N_0+n)^{1+\epsilon} - (N_0^{1+\epsilon}+(1+\epsilon)N_0^{\epsilon}n)| \leq \epsilon(1+\epsilon) N_0^{\epsilon-1}n^2.
\end{equation*}
Therefore, assuming $\phi$ to be Lipschitz, we have
\begin{align*}
&\left| \sum_{n=1}^N f(\phi_{n^{1+\epsilon}}(x))- \sum_{n=1}^N f(\phi_{b'_n}(x)) \right| 
\ll \sum_{j=1}^{J(N)} \sum_{n=k_{j-1}}^{\min(N,k_j)-1} |f(\phi_{n^{1+\epsilon}}(x)) - f(\phi_{b'_n}(x))| \\
&\ll ||f||_{\B} \sum_{j=1}^{J(N)} \sum_{l=0}^{\min(N,k_j)-k_{j-1}-1} k_{j-1}^{\epsilon-1} l^2
\ll ||f||_{\B} \sum_{j\leq N^{\frac{1}{a}}} j^{a(\epsilon-1)} \sum_{l\leq j^{a-1}} l^2\\
&
\ll ||f||_{\B} \sum_{j\leq N^{\frac{1}{a}}} j^{a(\epsilon+2)-3}
\ll ||f||_{\B} N^{\epsilon+2-\frac{2}{a}},
\end{align*}

this is $o(N)$ as long as $a<\frac{2}{1+\epsilon}$.

\begin{lemma}\label{b'lem}
For every $j=1,...,J(N)$ we have
\begin{equation*}
\int_{\R} S_{B'_{N,j}}(\xi) |\F(\chi_{\delta})(\xi)| \d\xi \ll  \frac{1}{\delta} \log(j). 
\end{equation*}
\end{lemma}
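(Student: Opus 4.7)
The set $B'_{N,j}$ is an arithmetic progression: inside the block $k_{j-1}\le n\le \min(N,k_j)-1$, the term $b'_n$ is linear in $n$ with common difference $\Delta_j:=(1+\epsilon)k_{j-1}^{\epsilon}\asymp j^{a\epsilon}$ (I tacitly take $j\ge 2$; for $j=1$ the set is a single point and the claim is trivial). Let $M_j:=\min(N,k_j)-k_{j-1}\ll j^{a-1}$ be its cardinality. Since the modulus of an exponential sum is insensitive to the constant phase, one has the factorisation
\begin{equation*}
S_{B'_{N,j}}(\xi)=|D_{M_j}(\Delta_j\xi)|,\qquad D_M(\theta):=\sum_{m=0}^{M-1}e(m\theta),
\end{equation*}
and hence the pointwise bound $S_{B'_{N,j}}(\xi)\ll\min(M_j,\|\Delta_j\xi\|^{-1})$ (with $\|\cdot\|$ the distance to the nearest integer) together with the classical Dirichlet-kernel estimate $\int_0^1|D_M(\theta)|\d\theta\ll\log M$.

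The plan is then to partition $\R$ into intervals $I_k:=[(k-\tfrac12)/\Delta_j,(k+\tfrac12)/\Delta_j]$, $k\in\Z$, each one period of $|D_{M_j}(\Delta_j\,\cdot\,)|$, so that after rescaling
\begin{equation*}
\int_{I_k}S_{B'_{N,j}}(\xi)\d\xi=\frac{1}{\Delta_j}\int_0^1|D_{M_j}(\theta)|\d\theta\ll\frac{\log M_j}{\Delta_j}.
\end{equation*}
On the other hand, Lemma \ref{fouriercoeffchilem} gives $|\F(\chi_\delta)(\xi)|\ll\min(1,1/(\xi\delta)^2)$, so on $I_k$ with $|k|\ge 1$ I get $\sup_{\xi\in I_k}|\F(\chi_\delta)(\xi)|\ll\min(1,\Delta_j^2/(k^2\delta^2))$, and on $I_0$ I use simply $|\F(\chi_\delta)|\ll 1$.

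Putting the two estimates together,
\begin{equation*}
\int_\R S_{B'_{N,j}}(\xi)|\F(\chi_\delta)(\xi)|\d\xi\ll\frac{\log M_j}{\Delta_j}\Bigl(1+\sum_{k\ne 0}\min\!\Bigl(1,\frac{\Delta_j^2}{k^2\delta^2}\Bigr)\Bigr).
\end{equation*}
Splitting the sum at $|k|\sim\Delta_j/\delta$, the terms with $|k|\le\Delta_j/\delta$ contribute $\ll\Delta_j/\delta$, while the tail contributes $\ll(\Delta_j^2/\delta^2)\sum_{|k|>\Delta_j/\delta}k^{-2}\ll\Delta_j/\delta$ by comparison with $\int k^{-2}\d k$. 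Hence the full integral is bounded by $(\log M_j)/\delta\ll(\log j)/\delta$, which is the claim.

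The main obstacle is the interplay between the two different scales $1/\Delta_j$ (the period of the Dirichlet kernel $|D_{M_j}(\Delta_j\,\cdot\,)|$) and $1/\delta$ (the concentration scale of $\F(\chi_\delta)$); a crude pointwise bound $S_{B'_{N,j}}\le M_j$ combined with $\|\F(\chi_\delta)\|_{L^1}\ll 1/\delta$ would only give $M_j/\delta\asymp j^{a-1}/\delta$, which is far too large. The gain from $M_j$ down to $\log M_j$ comes exclusively from the classical logarithmic $L^1$-estimate for the Dirichlet kernel, and this is what produces the desired $\log j$ factor.
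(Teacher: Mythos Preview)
Your proof is correct and takes a genuinely different route from the paper's. The paper introduces an auxiliary cutoff parameter $\omega\in(0,1)$, splits the integral four ways according to whether $\|\Delta_j\xi\|<\omega$ (trivial bound $E_j\ll j^{a-1}$) or not (bound $E_j\ll\|\Delta_j\xi\|^{-1}$), and whether $|\xi|\le\delta^{-1}$ or not; after estimating each piece it optimises to $\omega=j^{1-a}$. You instead recognise that this $\omega$-optimisation is nothing but a hands-on rederivation of the classical estimate $\int_0^1|D_M|\,\d\theta\ll\log M$, and you invoke that estimate as a black box: decompose $\R$ into the periods $I_k$ of $|D_{M_j}(\Delta_j\,\cdot\,)|$, use the Dirichlet $L^1$-bound on each period, and control $|\F(\chi_\delta)|$ by its supremum on $I_k$. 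This is shorter and more transparent; the paper's argument buys nothing extra here. Both proofs tacitly use $\delta\lesssim\Delta_j$ (equivalently $\Delta_j/\delta\gtrsim 1$) when summing over $k$---in the paper this is hidden in the count ``number of $A_{j,l}^\omega$ meeting $[-\delta^{-1},\delta^{-1}]$ is $\ll(1+\epsilon)k_j^\epsilon/\delta$''---but since $\Delta_j>1$ and in every application $\delta$ is a negative power of $N$, this is harmless.
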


\begin{proof}
For $\xi\in \R$ we have
\begin{equation*}
S_{B'_{N,j}}(\xi)= \left|\sum_{b\in B'_{N,j}} e(\xi b)\right| \ll \left| \sum_{n=k_{j-1}}^{k_j-1} e( \xi b'_n) \right|
\ll \left| \sum_{n=0}^{\lfloor j^a - (j-1)^a \rfloor} e( \xi (1+\epsilon) k_{j-1}^{\epsilon} n)  \right|.
\end{equation*}

If $\xi (1+\epsilon)k_j^{\epsilon}\not\in \Z$ we have
\begin{equation}\label{ejest}
E_j(\xi):=\left| \sum_{n=0}^{\lfloor \min(N,j^a) - (j-1)^a \rfloor} e( \xi (1+\epsilon)k_j^{\epsilon}n) \right| \ll \frac{1}{\{ \xi (1+\epsilon)k_j^{\epsilon} \}},
\end{equation}
where $\{x\} =x-\lfloor x\rfloor$ denotes the fractional part. For $\omega\in (0,1)$ to be chosen later let
\begin{equation*}
A_j^{\omega}:= \bigcup_{l\geq 0} \underbrace{\left[ \frac{l-\omega}{1+\epsilon} k_j^{-\epsilon}, \frac{l+\omega}{1+\epsilon} k_j^{-\epsilon}\right]}_{=:A_{j,l}^{\omega}},
\end{equation*}
for $\xi\in A_j^{\omega}$ we will estimate trivially $E_j(\xi)\ll j^{a-1}$, while for $\xi\not\in A_j^{\omega}$ we use \eqref{ejest}. We split the integral as
\begin{align*}
\int_{\R} & E_j\left( \xi \right) |\F(\chi_{\delta})(\xi)| \d\xi \\
&\ll \underbrace{ \int_{A_j^{\omega}\cap [-\delta^{-1},\delta^{-1}]} E_j(\xi) |\F(\chi_{\delta})(\xi)| \d\xi}_{=I}+ \underbrace{\int_{(A_j^{\omega})^c\cap [-\delta^{-1},\delta^{-1}]} E_j(\xi) |\F(\chi_{\delta})(\xi)| \d\xi}_{=II}\\
&+ \underbrace{\int_{A_j^{\omega}\cap [-\delta^{-1},\delta^{-1}]^c} E_j(\xi) |\F(\chi_{\delta})(\xi)| \d\xi}_{=III} + \underbrace{\int_{(A_j^{\omega})^c \cap [-\delta^{-1},\delta^{-1}]^c} E_j(\xi) |\F(\chi_{\delta})(\xi)| \d\xi}_{=IV}. 
\end{align*}

We estimate $I,II,III$ and $IV$.

For $|\xi|\leq \delta^{-1}$ we have $|\F(\chi_{\delta})(\xi)| \ll 1$. Each $A_{j,l}^{\omega}$ is an interval of length $2\frac{\omega}{1+\epsilon} k_j^{-\epsilon}$ with midpoint $\frac{l}{1+\epsilon}k_j^{-\epsilon}$, it follows that
\begin{equation*}
m(A_j^{\omega}\cap [-\delta^{-1},\delta^{-1}]) \ll \frac{(1+\epsilon)k_j^{\epsilon}}{\delta} \cdot  \frac{\omega}{1+\epsilon} k_j^{-\epsilon} \ll \frac{\omega}{\delta}.
\end{equation*}
So $I\ll \frac{j^{a-1} \omega}{\delta}$.

Considering $III$, we have
\begin{align*}
\frac{III}{j^{a-1}} & \ll \sum_{|l|\geq \ffloor{(1+\epsilon)k_j^{\epsilon}}{\delta}} \int_{A_{j,l}^{\omega}} \frac{1}{|\xi|^2\delta^2} \d\xi \ll \frac{1}{\delta^2} \sum_{|l|\geq \ffloor{(1+\epsilon)k_j^{\epsilon}}{\delta}} \int_{\frac{l-\omega}{(1+\epsilon)k_j^{\epsilon}}}^{\frac{l+\omega}{(1+\epsilon)k_j^{\epsilon}}} \frac{1}{|\xi|^2}\\
&\ll \frac{1}{\delta^2} \sum_{|l|\geq \ffloor{(1+\epsilon)k_j^{\epsilon}}{\delta}}  \left(\frac{(1+\epsilon)k_j^{\epsilon}}{l-\omega}- \frac{(1+\epsilon)k_j^{\epsilon}}{l+\omega} \right)
\ll \frac{1}{\delta^2} \sum_{|l|\geq \ffloor{(1+\epsilon)k_j^{\epsilon}}{\delta}}   \frac{\omega(1+\epsilon)k_j^{\epsilon}}{l^2} \ll \frac{\omega}{\delta}.
\end{align*}
Thus $III\ll \frac{j^{a-1} \omega}{\delta}$.

In order to estimate $II$, first we write 
\begin{equation*}
(A_j^{\omega})^c= \bigcup_{l\in \Z} \underbrace{\left( \frac{l+\omega}{1+\epsilon} k_j^{-\epsilon}, \frac{l+1-\omega}{1+\epsilon} k_j^{-\epsilon}\right)}_{=:C_{j,l}^{\omega}}.
\end{equation*}
So we can rewrite
\begin{equation*}
II  \ll \int_{(A_j^{\omega})^c\cap [-\delta^{-1},\delta^{-1}]} \frac{1}{\{\xi (1+\epsilon)k_j^{\epsilon}\}} |\F(\chi_{\delta})(\xi)| \d\xi
\ll \sum_{|l|\leq \fceil{(1+\epsilon) k_j^{\epsilon}}{\delta}} \int_{C_{j,l}^{\omega}} \frac{1}{\{\xi (1+\epsilon)k_j^{\epsilon}\}}\d\xi.
\end{equation*}

Note that, for each $l\in\Z$,
\begin{equation}\label{Cljepssum}
\begin{aligned}
\int_{C_{j,l}^{\omega}}& \frac{1}{\{\xi (1+\epsilon)k_j^{\epsilon}\}}\d\xi \ll \frac{1}{(1+\epsilon)k_j^{\epsilon}} \int_{\omega}^{1-\omega} \frac{1}{\min(x,1-x)} \d x \\
&\ll \frac{1}{(1+\epsilon)k_j^{\epsilon}}|\log(\omega)|.
\end{aligned}
\end{equation}
Hence $II\ll \frac{|\log(\omega)|}{\delta}$. 

Lastly for $IV$ we have
\begin{align*}
IV& \ll \frac{1}{\delta^2}\sum_{|l|\geq \ffloor{(1+\epsilon) k_j^{\epsilon}}{\delta}} \int_{C_{j,l}^{\omega}} \frac{1}{\{\xi (1+\epsilon)k_j^{\epsilon}\}} \frac{1}{|\xi|^2}\d\xi\\
&\ll \frac{(1+\epsilon)^2 k_j^{2\epsilon}}{\delta^2}\sum_{|l|\geq \ffloor{(1+\epsilon) k_j^{\epsilon}}{\delta}} \frac{1}{|l|^2}\int_{C_{j,l}^{\omega}} \frac{1}{\{\xi (1+\epsilon)k_j^{\epsilon}\}}\d\xi
\end{align*}
using \eqref{Cljepssum} we obtain $IV\ll \frac{|\log(\omega)|}{\delta}$.

It follows that
\begin{equation*}
\int_{\R} E_j\left( \xi \right) |\F(\chi_{\delta})(\xi)|\d\xi \ll j^{a-1} \frac{\omega}{\delta} + \frac{|\log(\omega)|}{\delta},
\end{equation*}
for all $\omega>0$ and $j=1,...,J(N)$. Choosing $\omega=j^{-a+1}$ yields
\begin{equation*}
\int_{\R} S_{B'_{N,j}}(\xi) |\F(\chi_{\delta})(\xi)| \d\xi \ll \int_{\R} E_j\left( \xi \right) |\F(\chi_{\delta})(\xi)|\d\xi \ll \frac{1}{\delta} \log(j). 
\end{equation*}

\end{proof}

\begin{proposition}\label{1+epsprop}
If $\phi$ is $\rho'$-Hölder continuous has polynomially decaying twisted ergodic integrals $x\in X$ 
with rate $\gamma_{f,x}(T)\ll ||f||_{\B} T^{-\kappa}$ and assume 
\begin{equation*}
2\epsilon_i-\kappa \epsilon_i<2d-2+\kappa, \; (1+\epsilon_i)(1-\kappa)<d \; \textit{ and } \; \epsilon_i<1
\end{equation*}
for all $i=1,...,d$.
Then, for some $\kappa'>0$ we have
\begin{equation*}
\frac{1}{N^d}\left| \sum_{\textbf{b}\in B_N} f(\phi_{\textbf{b}}(x)) \right|\ll ||f||_{\B} N^{\kappa'},
\end{equation*} 
for all $N\geq 1$
\end{proposition}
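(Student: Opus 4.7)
The plan is to adapt the one-dimensional approach reviewed in the paragraphs preceding Lemma \ref{b'lem}: approximate $B_N$ coordinatewise by unions of short arithmetic progressions, control the resulting exponential sum via Lemma \ref{b'lem}, and feed the output into Proposition \ref{sparseequilem1point} with $\Theta \equiv 1$.

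Since $\epsilon_i < 1$, pick $a \in (1, 2/(1+\epsilon_{\max}))$ with $\epsilon_{\max} := \max_i \epsilon_i$, set $k_j = \lceil j^a \rceil$ and $J(N) = \lceil N^{1/a} \rceil$. For each coordinate $i = 1,\ldots,d$ and each $j_i \in \{1,\ldots,J(N)\}$, define the approximating arithmetic progression $B'^{(i)}_{N,j_i}$ by linearising $n \mapsto n^{1+\epsilon_i}$ at $n = k_{j_i}$ on the interval $k_{j_i - 1} \le n \le \min(N, k_{j_i}) - 1$, exactly as in the scalar case. Let $B'_N = \prod_i \bigcup_{j_i} B'^{(i)}_{N, j_i} \subset \R^d$; it has $N^d$ points and $d(B'_N) \sim N^{1+\epsilon_{\max}}$.

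The coordinatewise Taylor bound $|n^{1+\epsilon_i} - b'^{(i)}_n| \ll k_{j_i}^{\epsilon_i - 1}(n - k_{j_i})^2$, combined with the $\rho\rho'$-H\"older continuity of $\textbf{t} \mapsto f(\phi_{\textbf{t}}(x))$ (from $f \in \B \subset C^\rho$ via (N1) and the H\"older assumption on $\phi$), gives an approximation error
\[
\Big|\sum_{\textbf{b} \in B_N} f(\phi_{\textbf{b}}(x)) - \sum_{\textbf{b}' \in B'_N} f(\phi_{\textbf{b}'}(x))\Big| \ll ||f||_{\B} N^{d + (1+\epsilon_{\max})\rho\rho' - 2\rho\rho'/a},
\]
which is $o(N^d)$ by the choice of $a$. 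For the main term, the Cartesian product structure of $B'_N$ factorises the relevant Fourier integral as
\[
\int_{\R^d} S_{B'_N}(\xi) \prod_{i=1}^d |\F(\chi_\delta)(\xi_i)| \d\xi = \prod_{i=1}^d \int_\R \Big|\sum_{b \in B'^{(i)}_N} e(\xi_i b)\Big| \, |\F(\chi_\delta)(\xi_i)| \d\xi_i,
\]
and bounding each factor by $\sum_{j_i=1}^{J(N)} \int_\R S_{B'^{(i)}_{N,j_i}}(\xi_i) |\F(\chi_\delta)(\xi_i)| \d\xi_i \ll \delta^{-1} J(N) \log N$ via Lemma \ref{b'lem} yields a total bound $\ll \delta^{-d} J(N)^d (\log N)^d$. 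Plugging this together with $\gamma_{f,x}(T) \ll ||f||_{\B} T^{-\kappa}$ into Proposition \ref{sparseequilem1point} produces
\[
\frac{1}{N^d}\Big|\sum_{\textbf{b}' \in B'_N} f(\phi_{\textbf{b}'}(x))\Big| \ll ||f||_{\B} \Big(\delta^{\rho\rho'} + \delta^{-d}(\log N)^d N^{(1+\epsilon_{\max})(d-\kappa) - d + d/a}\Big) + N^{-d}.
\]

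Finally, setting $\delta = N^{-\alpha}$ and combining with the approximation error, one optimises over the two free parameters $(a, \alpha)$. The resulting polynomial constraints are simultaneously solvable precisely under the stated hypotheses on the $\epsilon_i$, producing a bound of $||f||_{\B} N^{-\kappa'}$ for some $\kappa' > 0$. The main technical obstacle will be this bookkeeping: one must translate the feasibility conditions for $(a, \alpha)$ into the inequalities $(1+\epsilon_i)(1-\kappa) < d$ and $2\epsilon_i - \kappa\epsilon_i < 2d-2+\kappa$, balancing the three competing polynomial losses (the Taylor approximation, the H\"older regularisation term $\delta^{\rho\rho'}$, and the main twisted-integral contribution).
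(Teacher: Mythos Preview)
Your global strategy---approximate $B_N$ by products of short arithmetic progressions and feed into Proposition~\ref{sparseequilem1point}---is right, but applying Proposition~\ref{sparseequilem1point} \emph{once} to the whole set $B'_N$ and then using the triangle inequality on the exponential sum is too crude: it does not recover the proposition under the stated hypotheses. The loss is easiest to see already in dimension $d=1$. Your main term carries the exponent
\[
(1+\epsilon)(1-\kappa) \;+\; \tfrac{1}{a} \;-\; 1
\]
(before the $\delta$-optimisation), because the single twisted-integral gain $\gamma_{f,x}(d(B'_N))\ll N^{-\kappa(1+\epsilon)}$ must absorb the $J(N)=N^{1/a}$ summands produced by the triangle inequality on $S_{B'_N}$. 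Since $a<2/(1+\epsilon)$ forces $1/a>1/2$, your condition becomes (for $\epsilon\to 0$) essentially $\kappa>1/2$, far stronger than the stated hypothesis. For $d\ge 2$ the situation is worse: the factor $(d(B'_N))^d\sim N^{d(1+\epsilon_{\max})}$ in Proposition~\ref{sparseequilem1point} makes the main-term exponent $(1+\epsilon_{\max})(d-\kappa)+d/a-d$, which cannot be made negative for any $\kappa\le 1$.

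The paper avoids this by applying Proposition~\ref{sparseequilem1point} \emph{separately to each small block} $\tilde B_{N,j_1,\dots,j_d}=\prod_i B'^{(i)}_{N,j_i}$ and only then summing over $j_1,\dots,j_d$. Each block has diameter $\sim\max_i j_i^{a_i(1+\epsilon_i)-1}$, so the twisted-integral gain is applied at that (much smaller) scale, and summing $d(\tilde B)^{\,1-\kappa}$ over the blocks produces the exponent $(1+\epsilon_i)(1-\kappa)+\kappa/a_i$ rather than your $(1+\epsilon_i)(1-\kappa)+1/a_i$. The saving of $(1-\kappa)/a_i$ per coordinate is exactly what makes the hypotheses $2\epsilon_i-\kappa\epsilon_i<2d-2+\kappa$ and $(1+\epsilon_i)(1-\kappa)<d$ sufficient. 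In short: keep your linearisation and your use of Lemma~\ref{b'lem}, but invoke Proposition~\ref{sparseequilem1point} block by block rather than on all of $B'_N$ at once.
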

\begin{proof}
Let $\delta>0$. Analogously to the above let $a_1,...,a_d>0$ and $k_{j,i}=\lceil j^{a_i} \rceil$ for $i=1,...,d$ and $j=1,...,J_i(N)$ where $J_i(N)=\lceil N^{\frac{1}{a_i}} \rceil$. Let
\begin{equation*}
b'_{n,i}=k_{j,i}^{1+\epsilon}+ (1+\epsilon) k_{j,i}^{\epsilon} (n-k_{j,i}) , \;\;\; \textit{ for } k_{j,i}\leq n <k_{j+1,i}, j\geq 1,
\end{equation*}
and $B'_{N,j,i}=\{b'_n \;|\; k_{j-1,i} \leq n \leq \min(N,k_{j,i})-1\}$. Let
\begin{equation*}
\tilde{B}_{N,j_1,...,j_d}=\prod_{i=1}^d B'_{N,j_i,i}, \; \textit{ and } \; \tilde{B}_N=\bigcup_{j_1,...,j_d} \tilde{B}_{N,j_1,...,j_d}.
\end{equation*}
Then we have
\begin{align*}
\left| \sum_{\textbf{b}\in B_N} f(\phi_{\textbf{b}}(x)) \right. & \left. - \sum_{\textbf{b}'\in \tilde{B}_N} f(\phi_{\textbf{b'}}(x))\right|\\
&\ll ||f||_{\B} \sum_{\substack{j_i=1,...,J_i(N)\\i=1,...,d}} \sum_{\substack{l_i=0,...\min(N,k_{j,i})-k_{j-1,i}-1 \\i=1,...,d}} \sum_{i=1}^{d} j_i^{a_i(\epsilon_i-1)} l_i^2\\
&\ll N^{d-1} \sum_{i=1}^{d} N^{2+\epsilon_i-\frac{2}{a_i}}.
\end{align*}
Furthermore
\begin{equation*}
d(\tilde{B}_{N,j_1,...,j_d}) \ll \sum_{i=1}^d j_i^{a_i(1+\epsilon_i)-1}.
\end{equation*}
Hence, using Proposition \ref{sparseequilem1point} and Lemma \ref{b'lem}, we have
\begin{align*}
&\left| \sum_{\textbf{b}'\in \tilde{B}_N} f(\phi_{\textbf{b}'}(x))\right| \ll \sum_{j_1,...,j_d} \left| \sum_{\textbf{b}'\in \tilde{B}_{N,j_1,...,j_d}} f(\phi_{\textbf{b}'}(x)) \right|\\
&\ll \sum_{j_1,...,j_d} \left( 1+ \# \tilde{B}_{N,j_1,...,j_d} \delta^{\rho \rho'} ||f||_{\B}\right)\\
&\;\;\;+ \sum_{j_1,...,j_d} d(\tilde{B}_{N,j_1,...,j_d}) \gamma_{f,x}(d(\tilde{B}_{N,j_1,...,j_d})) \int_{\R^d} S_{\tilde{B}_{N,j_1,...,j_d}} \left(\xi \right) \prod_{j=1}^d |\F(\chi_{\delta})(\xi_j)| \d\xi \\
&\ll N^{\sum_{i=1}^d \frac{1}{a_i}} + N^d \delta^{\rho \rho'} ||f||_{\B} + ||f||_{\B}\sum_{j_1,...,j_d} \sum_{i=1}^d j_i^{(a_i(1+\epsilon_i)-1)(1-\kappa)} \prod_{i=1}^d \frac{1}{\delta} \log(j_i)\\
&\ll N^{\sum_{i=1}^d \frac{1}{a_i}} + N^d \delta^{\rho \rho'} ||f||_{\B} + ||f||_{\B} \frac{1}{\delta^d} \log^d(N) \sum_{j_1,...,j_d} \sum_{i=1}^d j_i^{(a_i(1+\epsilon_i)-1)(1-\kappa)} \\
&\ll N^{\sum_{i=1}^d \frac{1}{a_i}} + N^d \delta^{\rho \rho'} ||f||_{\B} + ||f||_{\B} \frac{1}{\delta^d} \log^d(N)  \sum_{i=1}^d N^{(1+\epsilon_i)(1-\kappa)+\frac{\kappa}{a_i}}.
\end{align*}

Setting 
\begin{equation*}
\delta=\left(\frac{N^d}{ \sum_{i=1}^d N^{(1+\epsilon_i)(1-\kappa)+\frac{\kappa}{a_i}}}\right)^{-\frac{1}{d+ \rho \rho'}}
\end{equation*}
yields
\begin{align*}
&\left| \sum_{\textbf{b}'\in \tilde{B}_N} f(\phi_{\textbf{b}'}(x))\right|\\
&\ll N^{\sum_{i=1}^d \frac{1}{a_i}}+ N^d ||f||_{\B}\log^d(N) \left(\frac{N^{d}}{\sum_{i=1}^d N^{(1+\epsilon_i)(1-\kappa)+\frac{\kappa}{a_i}}}\right)^{-\frac{1}{d+ \rho \rho'}}.
\end{align*}
Hence
\begin{equation}\label{1+epseq}
\begin{aligned}
& \left| \sum_{\textbf{b}\in B_N} f(\phi_{\textbf{b}}(x)) \right|\\
& \ll N^{d-1} \sum_{i=1}^{d} N^{2+\epsilon_i-\frac{2}{a_i}} + N^{\sum_{i=1}^d \frac{1}{a_i}}+ N^d ||f||_{\B}\log^d(N) \left(\frac{N^{d}}{\sum_{i=1}^d N^{(1+\epsilon_i)(1-\kappa)+\frac{\kappa}{a_i}}}\right)^{-\frac{1}{d +\rho \rho'}}\\
&  \ll ||f||_{\B}\log^d(N) (N^{d+1 +\max_i \left(\epsilon_i-\frac{2}{a_i}\right)}+ N^{d \max_i\left( \frac{1}{a_i}\right)} + N^{d-\frac{d}{d+\rho \rho'} + \frac{1}{d+\rho \rho'}\max_i \left((1+\epsilon_i)(1-\kappa)+\frac{\kappa}{a_i}\right)}.
\end{aligned}
\end{equation}

The expression in \eqref{1+epseq} is $o(N^d)$ (as desired) if the $a_i$ can be chosen such that all of the exponents are $<1$. In particular, this is satisfied if we can choose $a_i$ such that

\begin{itemize}
\item \begin{equation*}
\epsilon_i-\frac{2}{a_i} <-1 \;\;\; \forall i=1,...,d,
\end{equation*}
\item \begin{equation*}
a_i>1 \;\;\; \forall i=1,...,d
\end{equation*}
\item and \begin{equation*}
 (1+\epsilon_i)(1-\kappa)+\frac{\kappa}{a_i}<d \;\;\; \forall i=1,...,d.
\end{equation*}
\end{itemize}

 Rewriting, we can easily deduce that these conditions are satisfied if $(1+\epsilon_i)(1-\kappa)<d$ and 
\begin{equation*}
\min\left(1,\frac{\kappa}{d-(1+\epsilon_i)(1-\kappa)}\right)< a_i < \frac{2}{1+\epsilon_i}.
\end{equation*}

Such $a_i$ exist, whenever
\begin{equation*}
\frac{\kappa}{d-(1+\epsilon_i)(1-\kappa)} <  \frac{2}{1+\epsilon_i}
\end{equation*}
or equivalently
\begin{equation*}
2\epsilon_i-\kappa \epsilon_i<2d-2+\kappa.
\end{equation*}
\end{proof}

\section{Examples}\label{explesec}

\subsection{Time change}
Many of our examples are smooth time changes of well-known flows. We will verify polynomial (weak) mixing and polynomial ergodicity for each of them. For polynomial ergodicity, it is enough to verify it for the original flow. Indeed, as we will show below, polynomial ergodicity of a smooth flow is equivalent to polynomial ergodicity of any smooth time change.

Let $\phi$ be a $C^r$ flow on a compact manifold $X$. 
Let $\tau:X\rightarrow (0,\infty)$ be a positive smooth function bounded away from $0$ and $\infty$, i.e $0< \inf \tau \leq \sup \tau < \infty$, and, for $t\in \R$, let $\sigma(t,x)$ be defined by
\begin{equation}\label{sigmadef}
t=\int_0^{\sigma(t,x)} \tau(\phi_s(x)) \d s \;\;\;x\in X.
\end{equation}
Note that $\sigma$ is a cocycle, i.e. for $t,t'\in \R, x\in X$ we have
\begin{equation*}
\sigma(t+t',x)=\sigma(t,x)+\sigma(t',\phi_{t}(x)).
\end{equation*}
Therefore, the \textit{time-change} $\phi^{\tau}$ given by
\begin{equation*}
\phi^{\tau}_t(x)=\phi_{\sigma(t,x)}(x) \;\;\; x\in X, t\in \R,
\end{equation*}
is well-defined. If $\phi$ is the flow satisfying
$
\frac{\d\phi}{\d t}=V,
$
for a vectorfield $V$, then $\phi^{\tau}$ satisfies
$
\frac{\d\phi^{\tau}}{\d t}=\tau V.
$

Suppose $\phi$ preserves the measure $\mu$ and $\tau\in C^1$, then for $f\in C^1$
\begin{equation*}
\frac{\d}{\d t} \int_X f(\phi^{\tau}_t(x)) \tau(x) \d\mu(x) = \frac{\d}{\d t} \int_X f(\phi_{\sigma(t,x)}(x)) \tau(x) \d\mu(x) = \frac{\d}{\d s} \int_X f(\phi_s(x)) \d\mu(x) =0,
\end{equation*}
hence $\phi^{\tau}$ preserves the measure $\mu^{\tau}$ with density $\tau$, i.e $\frac{\d \mu^{\tau}}{\d\mu}=\tau$. For simplicity assume $\int_X\tau \d\mu=1$.\\

\begin{proposition}\label{polyergprop}
If $\phi$ is polynomially ergodic, then so is $\phi^{\tau}$.
\end{proposition}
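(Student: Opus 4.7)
The plan is to perform a change of variables that converts ergodic averages for $\phi^{\tau}$ into ergodic averages for $\phi$. For $T>0$ and $x\in X$, substituting $s=\sigma(t,x)$ (so that $\d t=\tau(\phi_s(x))\d s$) rewrites
\[
\frac{1}{T}\int_0^T f(\phi^{\tau}_t(x))\d t \;=\; \frac{S}{T}\cdot\frac{1}{S}\int_0^S (f\tau)(\phi_s(x))\d s,
\]
where $S=\sigma(T,x)$ is determined by $T=\int_0^S \tau(\phi_s(x))\d s$. Since $0<\inf\tau\leq\sup\tau<\infty$, one has $S\asymp T$, so polynomial decay in $S$ transfers to polynomial decay in $T$.

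The first key step is to apply polynomial ergodicity of $\phi$ at $x$ to the test function $\tau\in\B$, yielding $\bigl|\tfrac{1}{S}\int_0^S \tau(\phi_s x)\d s-1\bigr|\ll \|\tau\|_{\B} S^{-\delta}$. Because $\tau$ is bounded below by a positive constant, this implies $\bigl|\tfrac{S}{T}-1\bigr|\ll \|\tau\|_{\B} T^{-\delta}$. The second key step is to apply polynomial ergodicity to $g:=f\tau$; here I use that $\B$ is a Banach algebra (condition (N1)) so that $\|f\tau\|_{\B}\ll\|f\|_{\B}\|\tau\|_{\B}$, together with $\int_X f\tau\d\mu=\int_X f\d\mu^{\tau}$, to obtain
\[
\left|\frac{1}{S}\int_0^S (f\tau)(\phi_s x)\d s-\int_X f\d\mu^{\tau}\right|\ll \|f\|_{\B}\|\tau\|_{\B}T^{-\delta}.
\]
Combining these via the identity $AB-c=A(B-c)+(A-1)c$ (with $A=S/T$, $B$ the time-$S$ ergodic average, and $c=\int f\d\mu^{\tau}$), and invoking $\|f\|_{\infty}\ll \|f\|_{\B}$ from (N1) to bound $|c|$, yields the desired estimate on $\bigl|\tfrac{1}{T}\int_0^T f(\phi^{\tau}_t x)\d t-\int f\d\mu^{\tau}\bigr|$, with the $\|\tau\|_{\B}$-dependent factors absorbed into the implicit constant (which is allowed to depend on $\phi^{\tau}$, and hence on $\tau$).

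The two-sided integral appearing in $\beta^{\phi^{\tau}}_{f,x}(T)$ is handled by extending $\sigma$ to negative arguments via $T=\int_{\sigma(-T,x)}^0 \tau(\phi_s x)\d s$ and running the same argument on $[-T,0]$, with $S_-:=-\sigma(-T,x)\asymp T$; averaging the two one-sided bounds produces the bound on the symmetric ergodic average. I do not expect a genuine obstacle: the argument is essentially an unwinding of the definition, and the only points requiring care are the comparison $S\asymp T$ together with the Banach algebra property, which ensures that multiplying the test function by $\tau$ does not degrade the polynomial rate.
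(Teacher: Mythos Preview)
Your proof is correct and rests on the same change of variables $s=\sigma(t,x)$ as the paper's. The paper's argument is slightly more streamlined: it works from the outset with $f\in L^2_0(\mu^\tau)$, so that $\int_X f\tau\,\d\mu=\int_X f\,\d\mu^\tau=0$; then
\[
\left|\int_0^T f(\phi^\tau_t(x))\,\d t\right|=\left|\int_0^{\sigma(T)} (f\tau)(\phi_s(x))\,\d s\right|\ll \|f\|_{\B}\,\sigma(T)^{1-\delta}\ll \|f\|_{\B}\,T^{1-\delta}
\]
follows directly from polynomial ergodicity of $\phi$ applied to the mean-zero function $f\tau$, together with the crude bound $\sigma(T)\ll T$. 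In your decomposition $AB-c=A(B-c)+(A-1)c$, taking $c=0$ makes the second term vanish and only $|A|\ll 1$ is needed for the first; so your ``first key step'' (proving $|S/T-1|\ll T^{-\delta}$ via the ergodic average of $\tau$) is correct but superfluous.
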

\begin{proof}
Fix $x\in X$ and denote $\sigma(t)=\sigma(t,x)$. From \eqref{sigmadef} we obtain
\begin{equation*}
\tau(\phi_{\sigma(t)}(x)) \sigma'(t)=1\;\;\; t\in \R,
\end{equation*}
hence for $f\in C^r\cap L^2_0 (\mu^{\tau})$ we have
\begin{equation*}
\left|\int_0^T f(\phi_{\sigma(t)}(x)) \d t\right|= \left|\int_0^{\sigma(T)} \frac{1}{\sigma'(\sigma^{-1}(s))} f(\phi_s(x)) \d s\right|  = \left|\int_0^{\sigma(T)} \tau(\phi_s(x)) f(\phi_s(x)) \d s\right|.
\end{equation*}
Since $\sigma(T) \leq \sup \tau T$ and $\int \tau f \d\mu= \int f \d\mu^{\tau}=0$, we have 
\begin{equation*}
\left|\int_0^T f(\phi_{\sigma(t)}(x)) \d t\right|\ll ||f||_{C^r} T^{1-\delta},
\end{equation*}
for some $\delta>0$.
\end{proof}

\subsection{Unipotent flows}

Let $G$ be a semisimple Lie group with finite centre and no compact factors and let $\Gamma < G$ be a cocompact, irreducible lattice, denote $X= \Gamma / G$. For $V\in \mathfrak{g}$ consider the flow
\begin{equation*}
\psi^V_t(\Gamma g)=\Gamma g exp(V t) , \; x=\Gamma g\in X.
\end{equation*}
Let $u\in \mathfrak{g}\setminus \{0\}$ be a \textit{nilpotent} element, i.e $[u,\cdot]$ is a (non-zero) nilpotent operator on $\mathfrak{g}$, then $\phi=\psi^u$ is called the \textit{unipotent flow}. Note that clearly $||\psi^u_t||_{C^r}$ grows polynomially in $t$.

By the Jakobson-Morozov Theorem, there is a subalgebra of $\mathfrak{g}$ containing $u$ and isomorphic to $\mathfrak{sl}(2,\R)$. In particular, there is an element $V\in \mathfrak{g}$ contained in this subalgebra such that the following renormalisation holds
\begin{equation*}
\psi^u_t \circ \psi^V_s= \psi^V_s\circ \psi^u_{t e^s} \;\;\;\forall t,s \in \R,
\end{equation*}
$\psi^V$ is called the geodesic flow.

Polynomial mixing is well known and true in great generality for unipotent flows and their time changes. This is shown in \cite[Theorem 5]{ravottiunimix}; Let $\tau$ be a smooth time change then there is an $\delta>0$ and $r\geq 1$ such that
\begin{equation*}
|\langle f\circ \phi^{\tau}_t, g \rangle | \ll ||f||_{C^r} ||g||_{C^r} T^{-\delta}, \;\;\;\forall f,g \in C^r\cap L^2_0(\mu^{\tau}),
\end{equation*}
where $\mu$ is the Haar measure.

On the other hand, polynomial decay of ergodic averages (at certain points) is much harder to obtain, and it is an active area of research. The references below prove results of the following type: For all $T>0$ and $x\in X$ either
\begin{itemize}
\item[(i)]
\begin{equation}\label{sleq}
\frac{1}{T}\left|\int_0^T f(\phi_t(x))\d t\right|\ll T^{-\kappa} ||f||_{C^r} \;\;\;\forall f\in C^r\cap L^2_0,
\end{equation}
\item[(ii)] or $x$ is in some exceptional set, $x\in X_T$.
\end{itemize} 

Some examples of actions with explicit control on ergodic averages are.

\begin{itemize}
\item The action of
\begin{equation*}
\begin{bmatrix}
1&t\\
0&1
\end{bmatrix}
\end{equation*}
on compact space $\Gamma / Sl(2,\R)$. In this case the polynomial equidistribution
of the flow orbits is classical. In fact, also
polynomial equidistribution of the time 1 map
is known, see \cite{tchorofu}, but our methods provide an alternative proof. 

\item The action of
\begin{equation*}
\begin{bmatrix}
1&t\\
0&1
\end{bmatrix} \times \begin{bmatrix}
1&t\\
0&1
\end{bmatrix}
\end{equation*}
on compact space $\Gamma / Sl(2,\R) \times Sl(2,\R)$ (\cite[Theorem 1.1]{lindenstrauss2022effective}). Here, the obstruction to being equidistributed is starting close to a periodic orbit. However, if $X$ is compact, there are no periodic orbits\footnote{Let $r_0$ be the injectivity radius of $X$, then every periodic orbit has period $\geq r_0$ (making $r_0$ smaller if necessary). Suppose $x$ is a periodic point, say of period $p$, since
\begin{equation*}
a_t = \begin{bmatrix}
e^{-t}&0\\
0&e^{t}
\end{bmatrix} \times \begin{bmatrix}
e^{-t}&0\\
0&e^{t}
\end{bmatrix}
\end{equation*}
renormalises the flow, $x a_{-t}$ has period $p e^{-2t}$. Letting $t\rightarrow\infty$ leads to a contradiction, hence there are no periodic points.}.
\item The action of
\begin{equation*}
\begin{bmatrix}
1&0&0\\
0&1&t\\
0&0&1
\end{bmatrix}
\end{equation*}
on $Sl(3,\Z) / Sl(3,\R)$ (\cite[Theorem 1.2]{yang2023effective}).  Here the (polynomial) equidistribution is not uniform in $X$, however, it is proven that for all points $x$ and big enough $T$ we have\footnote{More explicitly it is shown that $(x,T)$ satisfies \eqref{sleq}, or $x$ is in some exceptional set $X_T$. However, as pointed out in the references, Mahler's criterion implies that $X_T\cap K =\emptyset$ for each compact set $K\subset Sl(3,\Z) / Sl(3,\R)$ and big enough $T$.}
\begin{equation*}
\frac{1}{T}\left|\int_0^T f(\phi_t(x))\d t\right|\ll T^{-\kappa} ||f||_{C^r} \;\;\;\forall f\in C^r\cap L^2_0.
\end{equation*}

\end{itemize}

\subsection{Heisenberg nilflows}
Let $N$ be the Heisenberg group
\begin{equation*}
N=\left\{ \begin{bmatrix}
1&a&c\\
0&1&b\\
0&0&1
\end{bmatrix} \;|\; a,b,c\in \R\right\}
\end{equation*}
its Lie algebra $\mathfrak{n}$ is generated (as a vector space) by
\begin{equation*}
A=\begin{bmatrix}
0&1&0\\
0&0&0\\
0&0&0
\end{bmatrix}, \quad B=\begin{bmatrix}
0&0&0\\
0&0&1\\
0&0&0
\end{bmatrix}, \quad\text{and}\quad C=\begin{bmatrix}
0&0&1\\
0&0&0\\
0&0&0
\end{bmatrix}.
\end{equation*}
Let $\Gamma < N$ be a co-compact lattice, it is 
well-known (see \cite{auslander1977lecture}) that, up to automorphism of $N$, there is a positive integer $E\in \N$ such that
\begin{equation*}
\Gamma = \left\{ \begin{bmatrix}
1&n&\frac{k}{E}\\
0&1&m\\
0&0&1
\end{bmatrix} \;|\; n,m,k \in \Z \right\}.
\end{equation*}
Denote $X=\Gamma / N$, and let $W=w_aA+w_b B+w_c C\in \mathfrak{n}$, for $w_a,w_b,w_c\in \R$. The \textit{Heisenberg nilflow} $\phi_t:X \rightarrow X$ is given by $\phi_t(x) = x \exp(tW) \;\;\;x\in X.$
More explicitly
\begin{equation*}
\phi_t\left( \Gamma \begin{bmatrix}
1&a&c\\
0&1&b\\
0&0&1
\end{bmatrix} \right) = \Gamma \begin{bmatrix}
1&a+t w_a & c+tx_c+t a w_b + \frac{t^2}{2} w_a w_b\\
0&1&b+tw_b\\
0&0&1
\end{bmatrix}.
\end{equation*}
In order to show polynomial ergodicity for $\phi$, we will conjugate it to a special flow over a map on the torus $\T^2$. To this end, first note that $\T^2$ is embedded in $X$ by
\begin{equation*}
\pi:(x,y) \in \T^2_E=\R^2 \backslash (\Z \times \Z \backslash E\Z) \mapsto \Gamma exp(xA+yC)=\Gamma \begin{bmatrix}
1&x&y\\
0&1&0\\
0&0&1
\end{bmatrix}
\end{equation*}
then $\pi$ is well-defined and a smooth homomorphism. So consider the section $Y=\pi(\T^2_E)\subset X$, the first return time to $Y$ is constant equal to $r=\frac{1}{w_b}$ and the first return map $R$ is given by
\begin{equation*}
R(\pi(x,y))= \pi\left( x+ \frac{w_a}{w_b},x+y+\frac{w_c}{w_b}+\frac{w_a}{2w_b} \right).
\end{equation*}

So in order to prove polynomial ergodicity of a Heisenberg nilflow, it is enough to prove polynomial ergodicity of a skew shift on $\T^2$, say of 
\begin{equation*}
R=R_{\eta_1,\eta_2}:(x,y) \mapsto (x+\eta_1, x+y+\eta_2),
\end{equation*}
where $\eta_1,\eta_2\in[0,1)$.\\
For $f\in H^2_0(\T^2)$ write $f(\textbf{x})=\sum_{\textbf{k}\in \Z^2} a_{\textbf{k}} e(\langle \textbf{k}, \textbf{x} \rangle)$ and first assume $a_{k_1,0}\equiv 0$. Then, for $\textbf{x}\in \T^2$ and $N\geq 1$, we have
\begin{align*}
\left| \sum_{n=0}^{N-1} f(R^n(\textbf{x})) \right| & = \left| \sum_{n=0}^{N-1} \sum_{\textbf{k}\in \Z^2} a_{\textbf{k}} e(\langle \textbf{k}+(0,n), \textbf{x} \rangle + n k_1\eta_1 +nk_2\eta_2+{n \choose 2}k_2\eta_1) \right|\\
&\leq \sum_{\textbf{k}\in \Z^2} |a_{\textbf{k}}| \left| \sum_{n=0}^{N-1} e(\langle \textbf{k}+(0,n), \textbf{x} \rangle + n k_1\eta_1 +nk_2\eta_2+{n \choose 2}k_2\eta_1) \right|.
\end{align*}
By \cite[Lemma 3.1]{matomaekipoly} (see also \cite[Propostition 4.3]{taopoly}) we have, for each $\textbf{k}\in \Z^2$, 
\begin{equation*}
\left| \sum_{n=0}^{N-1} e(\langle \textbf{k}+(0,n), \textbf{x} \rangle + n k_1\eta_1 +nk_2\eta_2+{n \choose 2}k_2\eta_1) \right| \ll N^{1-\delta},
\end{equation*}
for some $\delta>0$, provided $\eta_1$ is diophantine. In that case
\begin{equation*}
\left| \sum_{n=0}^{N-1} f(R^n(\textbf{x})) \right| \ll ||f||_{H^2} N^{1-\delta}.
\end{equation*}
Now for general $f\in H^r_0$, for $r$ big enough, we split into $f=f_1+f_2$, where
\begin{equation*}
f_1=\sum_{k\in \mathbb{Z}^2,k_2\not =0} a_k e_k \;\;\textit{ and }\;\;f_2=\sum_{k\in \mathbb{Z}^2,k_2=0} a_k e_k.
\end{equation*}
For $f_1$ we just showed
\begin{equation*}
\left| \sum_{n=0}^{N-1} f_1(R^n(\textbf{x})) \right| \ll ||f_1||_{H^r} N^{1-\delta},
\end{equation*}
while $f_2$ is a coboundary over $R$. Hence, $R$ is polynomially ergodic.

It follows that $\phi$ is also polynomially ergodic and, by Propostition \ref{polyergprop}, so is any smooth time change $\phi^{\tau}$.

Turning our attention to weak mixing, first note that $\phi$ itself can never be weak mixing, since it is a constant roof suspension. However, \cite[Theorem 2.4]{forni2017timechanges} shows that generic time changes are polynomially weak mixing.

\subsection{Higher dimensional horospheres on compact manifolds}\label{ddimhorosec}

For $m_1,m_2\geq 1$ consider the subgroup 
\begin{equation*}
H=\left\{ \begin{bmatrix}
Id_{m_1} & Y \\ 0 & Id_{m_2}
\end{bmatrix}, \; Y\in M_{m_1\times m_2} \right\} < Sl(m_1+m_2, \R),
\end{equation*}
and let $\Gamma<Sl(m_1+m_2,\R)$ be a cocompact lattice. Then $H$ is abelian and hence induces an $\R^d$-action, where $d=m_1m_2$, on $\Gamma / Sl(m_1+m_2,\R)$. Denote this action, with the obvious identification\footnote{Identify $\iota(t_1,...,t_d)=\begin{bmatrix}
Id_{m_1} & Y \\ 0 & Id_{m_2}
\end{bmatrix}$, where $Y=(t_{m_2(i-1)+j)})_{1\leq i\leq m_1, 1\leq j\leq m_2}$} $\iota$ of $H$ with $\R^d$, by $\phi$. The group $H$ is expanded by the (partially) hyperbolic element
\begin{equation*}
g_s=diag(e^{-\frac{s}{m_1}},...,e^{-\frac{s}{m_1}},e^{\frac{s}{m_2}},...,e^{\frac{s}{m_2}}).
\end{equation*}
Explicitly
\begin{equation*}
g_{-s} \begin{bmatrix}
Id_{m_1} & Y \\ 0 & Id_{m_2}
\end{bmatrix} g_{s} = \begin{bmatrix}
Id_{m_1} & e^{\rho s} Y \\ 0 & Id_{m_2}
\end{bmatrix},
\end{equation*}
where $\rho=\frac{1}{m_1}+\frac{1}{m_2}$, after identification $g_{-s} \iota(\textbf{t}) g_s= \iota(e^{\rho s} \textbf{t})$. 

In order to apply our methods and show pointwise sparse equidistribution, we need to first show that the action $\phi$ is polynomially mixing and has decaying ergodic averages of polynomial speed (in particular, it is uniquely ergodic for the Haar-measure $\mu=\mu_{Sl(m_1+m_2)}$). Both of these facts are shown in \cite{KMbdd}.

First, polynomial mixing follows from \cite[Corollary 2.4.4]{KMbdd} where it is shown that
\begin{equation*}
|\langle f_1\circ h, f_2 \rangle_{L^2(\mu)} | \ll e^{-\lambda dist(h,Id)} ||f_1||_{C^r} ||f_2||_{C^r} \;\;\; \forall f_1,f_2\in C^r\cap L^2_0(\mu),
\end{equation*}
for some $r\geq 1,\lambda>0$ and a Riemannian metric $dist(\cdot,\cdot)$. In particular, $\phi$ is polynomially weak mixing.

Turning our attention to ergodic averages, \cite[Proposition 2.4.8]{KMbdd} shows that
\begin{equation*}
\left| \int_{\R^d} f(\textbf{t}) \psi(g_s \iota(\textbf{t}) x) \d\textbf{t} \right|  \ll e^{-\lambda' s} ||f||_{H^r(\R^d)} ||\psi||_{C^r} 
\end{equation*}
for all $f\in C^r_c(\R^d),\psi \in C^r\cap L^2_0(\mu), x\in Sl(m_1+m_2,\R) \backslash \Gamma, s>0,$
and some $\lambda'>0$. Let $f\in C_c^r(\R^d)$ be such that $||f-1_{[0,1]^d}||_{L^{1}(\R^d)}\ll 1$ and $||f||_{H^r}\ll 1$. For $x_0\in Sl(m_1+m_2,\R) \backslash \Gamma$, $\psi \in C^r\cap L^2_0(\mu)$ and $T>1$, denote $s=\frac{1}{\rho} \log(T)$, we have 
\begin{align*}
\left| \int_{[-T,T]^d} \psi(\iota(\textbf{t})) \d\textbf{t} \right| & \ll \left| \int_{\R^d} f(\textbf{t}) \psi(g_s \iota(T \textbf{t}) (g_{-s} x_0)) \d\textbf{t} \right| + ||\psi||_{C^r}\\
&\ll (2T)^d \left| \int_{\R^d} f(T^{-1} \textbf{t}) \psi(g_s \iota(\textbf{t}) (g_{-s} x_0)) \d\textbf{t} \right| + ||\psi||_{C^r}\\
&\ll T^{d-\rho^{-1}\lambda'} ||\psi||_{C^r}.
\end{align*}

Now we may apply Theorem \ref{explethm}, to deduce equidistribution of $\phi$ at all points along\footnote{Rather, on sets of the form $\{(n_1^{1+\epsilon_1},...,n_d^{1+\epsilon_d})\}$ as in Definition \ref{1+epsdef}.} $n^{1+\epsilon}$, the rates decay polynomially.

\subsection{Linear flows}\label{linsec}

Let $\alpha\in (0,1)^d$ be \textit{diophantine}, i.e. there is a $n \geq 1$ such that
\begin{equation*}
|\textbf{k}|^{-n} \ll |\langle \textbf{k}, \alpha \rangle -l| \;\;\;\forall \textbf{k}\in \Z^{d-1} \setminus \{\textbf{0}\}, l\in \Z.
\end{equation*} 
Let $\phi_t(x)=x+\alpha$ be the linear flow on $\T^d$ with angle $\alpha$. \\
In order to complete the proof of Remark \ref{linrem}, we shall, for all $\textbf{k}\in \Z^d$, provide a polynomial rate of decay for 
\begin{equation*}
\frac{1}{N} \sum_{n=0}^{N-1} e(\xi n^p) \rightarrow 0 \;\;\;\as{N}
\end{equation*}
for $p>0$ and $\xi =\langle \textbf{k}, \alpha\rangle \not \in \Z$. An explicit bound shall be given in Corollary \ref{lincor}.

Our arguments will be based on a quantitative version of Féjer's Theorem (see \cite[Theorem 2.4]{kuipers1974uniform}) and van der Corput's Lemma. For convenience of the reader, let us reprove the quantitative versions.

\begin{lemma} 
Let $f:[0,\infty) \rightarrow\R$ be $C^1$ such that $f'(x)$ is eventually monotone. Then it holds that
\begin{equation} \label{fejerlemclaim}
\frac{1}{N} \left| \sum_{n=0}^{N-1} e(f(n)) \right| \ll \frac{1}{N} \left( \frac{1}{|f'(N)|} + |f(N)| \right),
\end{equation}
for big enough $N$.
\end{lemma}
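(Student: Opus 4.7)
The plan is to compare the sum to the corresponding integral $\int_0^N e(f(x))\,dx$ via Euler--Maclaurin summation and then estimate the integral by van der Corput's first derivative test; this is the classical approach behind F\'ejer's theorem.

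First I will fix $N_0$ large enough that $f'$ is monotone on $[N_0,\infty)$, which exists by hypothesis, and split the sum as $\sum_{n=0}^{N_0-1}+\sum_{n=N_0}^{N-1}$; the initial piece contributes $O(N_0)=O(1)$. For the tail, setting $g=e\circ f$ so that $g'=2\pi i f'\cdot g$, Euler--Maclaurin gives
\begin{equation*}
\sum_{n=N_0}^{N-1}g(n)=\int_{N_0}^{N-1}g(x)\,dx+\frac{g(N_0)+g(N-1)}{2}+\int_{N_0}^{N-1}\bigl(\{x\}-\tfrac{1}{2}\bigr)g'(x)\,dx.
\end{equation*}
The boundary term is $O(1)$, and the last remainder is bounded by $2\pi\int_{N_0}^{N-1}|f'|\,dx$. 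Monotonicity of $f'$ on $[N_0,\infty)$ forces $f'$ to have constant sign past some $N_1\geq N_0$, so $f$ is monotone on $[N_1,\infty)$ and $\int_{N_0}^{N-1}|f'|\,dx\leq|f(N-1)-f(N_0)|+O(1)\ll|f(N)|+O(1)$. For the main integral I will then invoke van der Corput's first derivative test: since $f'$ is monotone and, in the regime where the bound is nontrivial, $|f'|$ is decreasing (as in the target application $f(x)=\xi x^{p}$ with $0<p<1$), the minimum of $|f'|$ on $[N_0,N-1]$ is $\sim|f'(N)|$, so
\begin{equation*}
\left|\int_{N_0}^{N-1}e(f(x))\,dx\right|\leq\frac{1}{\pi\min_{[N_0,N-1]}|f'|}\ll\frac{1}{|f'(N)|}.
\end{equation*}
Combining yields $|\sum_{n=0}^{N-1}e(f(n))|\ll 1/|f'(N)|+|f(N)|+O(1)$, and dividing by $N$ and absorbing the $O(1)$ for $N$ large gives the stated bound.

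The main subtlety will be matching the direction of monotonicity of $|f'|$ with the form $1/|f'(N)|$ in the bound: $|f'|$ must be eventually decreasing in order for $|f'(N)|$ to realise the minimum on $[N_0,N-1]$, which is the case in the intended applications. Van der Corput's first derivative lemma itself---the key analytic input---is elementary (integration by parts against $e(f(x))$ combined with monotonicity of $1/f'$) and, as the authors suggest, will presumably be reproved in the next step of the paper.
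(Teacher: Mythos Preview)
Your approach is correct but differs from the paper's. The paper argues entirely in the discrete setting: from the second-order Taylor expansion $e(f(n+1))-e(f(n))=2\pi i\,\Delta f(n)\,e(f(n))+O((\Delta f(n))^2)$ it extracts
\[
2\pi i\,e(f(n))=\frac{e(f(n+1))}{\Delta f(n+1)}-\frac{e(f(n))}{\Delta f(n)}+O\!\left(|\Delta f(n)|+\left|\tfrac{1}{\Delta f(n)}-\tfrac{1}{\Delta f(n+1)}\right|\right),
\]
sums, and lets the main term telescope; monotonicity of $\Delta f$ (inherited from monotonicity of $f'$) makes both error sums telescope as well, giving $1/|\Delta f(N)|+|f(N)-f(0)|$ directly. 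Your route instead passes to the continuum via Euler--Maclaurin and then applies the first-derivative test to the integral. Both are short and classical; the paper's version is marginally more self-contained (no named lemmas invoked) and sidesteps the case split you flagged, since the telescoping of $\sum|1/\Delta f(n)-1/\Delta f(n+1)|$ does not care which direction $|f'|$ moves. Your observation that the bound is only nontrivial when $|f'|$ is eventually decreasing is correct and suffices to close the gap in your argument, so nothing is actually missing. One minor correction: the van der Corput lemma quoted later in the paper is the Weyl differencing inequality for \emph{sums}, not the first-derivative test for integrals, so it is not quite the tool you anticipate being reproved.
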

\begin{proof}
For each $n\geq 0$, denote $\Delta f(n)=f(n+1) - f(n)$, by the Mean Value Theorem it holds that
\begin{align*}
\left| e(f(n+1)) - e(f(n)) -2\pi i \Delta f(n) e(f(n)) \right| \ll (\Delta f(n))^2,
\end{align*}
dividing by $\Delta f (n)$
\begin{equation*}
\left| \frac{e(f(n+1))}{\Delta f (n+1)} - \frac{e(f(n))}{\Delta f(n)} -2\pi i e(f(n)) \right| \ll |\Delta f(n)| + \left|\frac{1}{\Delta f (n)} - \frac{1}{\Delta f (n+1)}\right|.
\end{equation*}
Using this, we have
\begin{align*}
\left| 2\pi i \sum_{n=0}^{N-1} e(f(n)) \right| &\ll \left| \sum_{n=0}^{N-1} \left( -\frac{e(f(n+1))}{\Delta f (n+1)} + \frac{e(f(n))}{\Delta f(n)} +2\pi i e(f(n))\right) + \frac{e(f(N))}{\Delta f (N)} - \frac{e(f(0))}{\Delta f(0)} \right|\\
&\ll \sum_{n=0}^{N-1} \left|\frac{1}{\Delta f (n)} - \frac{1}{\Delta f (n+1)}\right| + \frac{1}{|\Delta f (N)|} + \frac{1}{|\Delta f (0)|} + \sum_{n=0}^{N-1} \Delta f(n),
\end{align*}
and the claim follows since $\Delta f (n)$ is monotone.
\end{proof}

The following is Lemma 3.1 of \cite{kuipers1974uniform}.

\begin{lemma}\label{Corputlem}
Let $\s{u}{n}$ be a sequence of complex numbers, then, for every $1\leq H \leq N$ it holds that
\begin{equation*}
H^2 \left| \sum_{n=1}^N u_n \right|^2 \leq H(N+H-1) \sum_{n=1}^N |u_n|^2 + 2(N+H-1) \sum_{h=1}^{H-1} (H-h) \text{Re} \sum_{n=1}^{N-h}  u_n \bar{u}_{n+h}.
\end{equation*}
\end{lemma}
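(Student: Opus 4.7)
The plan is to apply the classical doubling trick of van der Corput. First I would extend the sequence $(u_n)$ to all of $\Z$ by setting $u_n = 0$ outside $[1,N]$, and introduce the block sums
\begin{equation*}
v_m = \sum_{h=0}^{H-1} u_{m+h}, \qquad m \in \{2-H,\, 3-H,\, \ldots,\, N\},
\end{equation*}
a range of exactly $N+H-1$ indices. A direct count shows that each $n \in [1,N]$ lies in exactly $H$ of these windows (namely those with $m \in \{n-H+1,\ldots,n\}$), so
\begin{equation*}
\sum_{m=2-H}^{N} v_m = H \sum_{n=1}^{N} u_n.
\end{equation*}

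Next I would apply the Cauchy--Schwarz inequality to this identity, obtaining
\begin{equation*}
H^2 \left| \sum_{n=1}^N u_n \right|^2 = \left| \sum_{m=2-H}^{N} v_m \right|^2 \leq (N+H-1) \sum_{m=2-H}^{N} |v_m|^2,
\end{equation*}
which already accounts for the prefactor $(N+H-1)$ appearing on both terms of the right-hand side of the lemma.

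The remaining step is to expand $|v_m|^2 = \sum_{h,h'=0}^{H-1} u_{m+h}\,\overline{u_{m+h'}}$ and interchange the order of summation. The $H$ diagonal contributions ($h=h'$) collapse to $H\sum_{n=1}^{N} |u_n|^2$ after reindexing $n = m+h$ (using the support of $u$ to recover the clean range). For the off-diagonal contributions I would pair $(h,h')$ with $h' > h$ against $(h',h)$ to produce a $2\,\text{Re}$, group by the positive difference $k = h'-h \in \{1,\ldots,H-1\}$ (there are $H-k$ such pairs for each $k$), and reindex $n = m+h$; the support condition forces the resulting inner sum to run precisely over $n \in [1, N-k]$, yielding
\begin{equation*}
2\,\text{Re} \sum_{k=1}^{H-1} (H-k) \sum_{n=1}^{N-k} u_n \bar{u}_{n+k}.
\end{equation*}
Plugging these two identities back into the Cauchy--Schwarz inequality gives the claimed bound. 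I do not expect any genuine obstacle: the only point requiring care is the bookkeeping for the index ranges after the shifts, but this is fully forced by $u_n$ being supported in $[1,N]$.
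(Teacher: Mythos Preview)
Your argument is correct and is precisely the standard proof of van der Corput's inequality: extend by zero, form the $H$-block sums, apply Cauchy--Schwarz over the $N+H-1$ shifts, then expand the square and separate diagonal from off-diagonal terms. The bookkeeping you outline is accurate; in particular the support constraint on $u_n$ indeed forces the inner off-diagonal sum to the exact range $1\le n\le N-k$.

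As for the comparison: the paper does not supply its own proof of this lemma at all --- it simply quotes it as Lemma~3.1 of Kuipers--Niederreiter. Your write-up is exactly the argument given there, so there is no methodological difference to discuss.
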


Setting $u_n=e(f(n-1))$ in Lemma \ref{Corputlem}, for some monotone $f:\N \rightarrow\R$, and dividing by $N^2H^2$ we obtain
\begin{equation}\label{Corputexp}
\begin{aligned}
\frac{1}{N^2} &\left| \sum_{n=0}^{N-1} e(f(n)) \right|^2\\
&\leq \frac{N+H-1}{NH}+ 2\sum_{h=1}^{H-1} \frac{(N+H-1)(H-h)(N-h)}{N^2H^2} \left| \frac{1}{N-h} \sum_{n=0}^{N-h-1} e(f(n+h)-f(n)) \right|\\
& \ll \frac{1}{H} + \frac{1}{H} \sum_{h=1}^{H-1} \left| \frac{1}{N-h+1} \sum_{n=0}^{N-h-1} e(f(n+h)-f(n)) \right|
\end{aligned}
\end{equation}

\begin{proposition}
Let $k\geq 1$ and $f:[0,\infty) \rightarrow\R$ be $C^k$ such that $f^{(k)}$ is eventually monotone. Then it holds that
\begin{equation}\label{Corputfk}
\left| \frac{1}{N} \sum_{n=0}^{N-1} e(f(n)) \right|^{2(k-1)} \ll \frac{1}{N} \left( \frac{1}{|f^{(k)}(N)|} + |f^{(k-1)}(N)| \right)
\end{equation}
for big enough $N$.
\end{proposition}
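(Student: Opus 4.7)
The proof proceeds by induction on $k$, with base case $k=1$ being precisely the Lemma immediately preceding the proposition, whose conclusion is \eqref{fejerlemclaim}. The inductive step is driven by the van der Corput inequality \eqref{Corputexp}, which reduces the $k$-derivative problem to one with $k-1$ derivatives.

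For the inductive step, assume the bound for $k-1$, and let $f\in C^k$ with $f^{(k)}$ eventually monotone. Applying \eqref{Corputexp}, for $1\leq H\leq N$ we have
\begin{equation*}
\left|\frac{1}{N}\sum_{n=0}^{N-1} e(f(n))\right|^2 \ll \frac{1}{H} + \frac{1}{H}\sum_{h=1}^{H-1}\left|\frac{1}{N-h+1}\sum_{n=0}^{N-h-1} e(g_h(n))\right|,
\end{equation*}
where $g_h(x):=f(x+h)-f(x)$. For fixed $h\geq 1$, the function $g_h$ is $C^{k-1}$, and $g_h^{(k-1)}$ is eventually monotone because its derivative $x\mapsto f^{(k)}(x+h)-f^{(k)}(x)$ is eventually of one sign (using the eventual monotonicity of $f^{(k)}$). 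Hence the inductive hypothesis applies to each $g_h$.

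By the Mean Value Theorem, $g_h^{(k-1)}(N-h)=h\,f^{(k)}(\xi_1)$ and $g_h^{(k-2)}(N-h)=h\,f^{(k-1)}(\xi_2)$ for some $\xi_1,\xi_2\in(N-h,N)$, and the slow variation of $f^{(k-1)},f^{(k)}$ on intervals of length $\leq H$ (a consequence of their eventual monotonicity, provided $H\leq N/2$) gives $|g_h^{(k-1)}(N-h)|\sim h|f^{(k)}(N)|$ and $|g_h^{(k-2)}(N-h)|\sim h|f^{(k-1)}(N)|$. Inserting the inductive bound for $g_h$ into the van der Corput inequality and summing over $h$ yields an expression controlled by $\frac{1}{H}$, by a sum over $h$ of $h^{-1}$ weighted by $\frac{1}{N|f^{(k)}(N)|}$, and by a sum over $h$ of $h$ weighted by $\frac{|f^{(k-1)}(N)|}{N}$. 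An appropriate choice of $H$ balances these contributions and, after raising to the required power, yields the claimed bound.

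The principal obstacle is careful bookkeeping of the exponents through the induction: each application of \eqref{Corputexp} squares the left-hand side, so $H$ at each level must be selected so that the cumulative bound matches the target, and passing to fractional powers of the $h$-averaged term requires Jensen's (or H\"older's) inequality in the form $\frac{1}{H}\sum_h A_h^{1/p}\leq (\frac{1}{H}\sum_h A_h)^{1/p}$ for $p\geq 1$. A secondary technical point is uniform justification in $h$ of the slow-variation comparisons for $g_h^{(k-1)}$ and $g_h^{(k-2)}$; this is where the eventual monotonicity hypothesis on $f^{(k)}$ enters in an essential way.
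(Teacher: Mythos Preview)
Your proposal follows essentially the same route as the paper: induction on $k$ with base case the F\'ejer lemma \eqref{fejerlemclaim}, and the inductive step driven by the van~der~Corput inequality \eqref{Corputexp} applied to the differenced function $g_h(x)=f(x+h)-f(x)$, together with a power-mean/Jensen step to reconcile the exponents. The only cosmetic difference is the direction in which Jensen is applied: the paper raises the van~der~Corput bound to the power $2(k-1)$ and uses convexity in the form $(\sum_j a_j)^{p}\ll J^{p}\sum_j a_j^{p}$ to push the exponent inside the $h$-sum, whereas you take roots first and use concavity of $x\mapsto x^{1/p}$. The paper then simply sets $H=N$, rather than optimising over $H$.

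One point to be aware of: your ``slow variation'' step, asserting $|g_h^{(k-1)}(N-h)|\sim h|f^{(k)}(N)|$ and $|g_h^{(k-2)}(N-h)|\sim h|f^{(k-1)}(N)|$, does not follow from eventual monotonicity alone (consider $f^{(k)}(x)=e^{-x}$). The paper's corresponding step is phrased slightly differently --- it uses monotonicity of $f^{(k)}$ and $f^{(k-1)}$ to bound the sums $\sum_h |g_h^{(k)}(N)|^{-1}$ and $\sum_h |g_h^{(k-1)}(N)|$ directly rather than term-by-term --- but it is equally terse at this point. In either formulation some regularity of the higher derivatives on intervals of length $O(H)$ is being used implicitly; for the intended application $f(x)=\xi x^p$ this is of course immediate.
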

\begin{proof}
For $k=1$ this claim coincides with \eqref{fejerlemclaim}. Now per induction, assume the claim holds for some $k\geq 1$, we will show that it also holds for $k+1$.

Indeed, for $h\geq 1$ denote $g_h(x)=f(x+h)-f(x)$, then, by induction hypothesis, it holds that
\begin{equation*}
\left| \frac{1}{N} \sum_{n=0}^{N-1} e(g_h(n)) \right|^{2(k-1)} \ll \frac{1}{N} \left( \frac{1}{|g_h^{(k)}(N)|} + |g_h^{(k-1)}(N)| \right).
\end{equation*}
Using \eqref{Corputexp}, we obtain
\begin{align*}
\left| \frac{1}{N} \sum_{n=0}^{N-1} e(f(n)) \right|^{2k}& \ll \left(\frac{1}{H} + \frac{1}{H} \sum_{h=1}^{H-1} \left| \frac{1}{N-h+1} \sum_{n=0}^{N-h+1} e(g_h(n)) \right|\right)^{2(k-1)}\\
&\ll \frac{1}{H} + \frac{1}{H} \sum_{h=1}^{H-1} \left| \frac{1}{N-h+1} \sum_{n=0}^{N-h+1} e(g_h(n)) \right|^{2(k-1)}\\
& \ll \frac{1}{H} +\frac{1}{NH} \sum_{h=1}^{H-1} \frac{1}{|f^{(k)}(N+h)-f^{(k)}(N)|} &\\
& + \frac{1}{NH} \sum_{h=1}^{H-1} |f^{(k-1)}(N+h)-f^{(k-1)}(N)|,
\end{align*}
where, in the second inequality we used Cauchy-Schwartz, more explicitly $\left( \sum_{j=1}^J a_j \right)^{2k} \leq J^{2k} \sum_{j=1}^J a_j^{2k}$ for $J\geq 1$ and $a_1,...,a_J>0$.
Since $f^{(k)}$ and $f^{(k-1)}$ are eventually monotone, it holds that
\begin{equation*}
\left| \frac{1}{N} \sum_{n=0}^{N-1} e(f(n)) \right|^{2k} \ll \frac{1}{H} +\frac{1}{N} \frac{1}{|f^{(k)}(N)|} + \frac{1}{NH} |f^{(k-1)}(N)|,
\end{equation*}
for big enough $N$. The claim follows with $H=N$.
\end{proof}

\begin{proposition}
Let $p\in \N$ and $f(x)=\sum_{j=0}^p a_j x^j$ be a real polynomial of degree $p$ and leading coefficient $a_p\not \in \Z$, then it holds that
\begin{equation}\label{CorputN}
\left|\frac{1}{N} \sum_{n=0}^{N-1} e(f(n)) \right| \ll \frac{1}{d(\xi , \Z)} N^{-4^{-p+1}} \;\;\; N\geq 1.
\end{equation}
\end{proposition}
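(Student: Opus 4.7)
The plan is to prove \eqref{CorputN} by induction on the degree $p$, using the Fej\'er-type bound \eqref{fejerlemclaim} as the base case and the van der Corput step \eqref{Corputexp} to pass from degree $p-1$ to degree $p$. Throughout, I interpret $\xi$ in the statement as the leading coefficient $a_p$.

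For the base case $p = 1$, the polynomial is $f(n) = a_0 + a_1 n$ and the sum is geometric:
\begin{equation*}
\left|\sum_{n=0}^{N-1} e(f(n))\right| = \left|\frac{e(a_1 N) - 1}{e(a_1) - 1}\right| \ll \frac{1}{d(a_1, \Z)},
\end{equation*}
using $|e(t) - 1| \gtrsim d(t, \Z)$. Dividing by $N$ gives exactly the claim for $p = 1$, since $4^{-p+1} = 1$.

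For the inductive step, assume \eqref{CorputN} holds for every polynomial of degree $p - 1$ with non-integer leading coefficient. Given $f$ of degree $p$ with $a_p \notin \Z$, apply the van der Corput estimate \eqref{Corputexp} with a parameter $H \in [1, N]$ to be chosen; the shifted polynomial $g_h(x) = f(x+h) - f(x)$ has degree $p-1$ with leading coefficient $p h a_p$ in front of $x^{p-1}$. The inductive hypothesis applied to $g_h$ (for indices $h$ where $p h a_p \notin \Z$) gives
\begin{equation*}
\left|\frac{1}{N-h+1}\sum_{n=0}^{N-h-1} e(g_h(n))\right| \ll \frac{N^{-4^{-p+2}}}{d(p h a_p, \Z)}.
\end{equation*}
Inserting this into \eqref{Corputexp} and summing in $h$ yields, schematically,
\begin{equation*}
\frac{1}{N^2}\left|\sum_{n=0}^{N-1} e(f(n))\right|^2 \ll \frac{1}{H} + \frac{N^{-4^{-p+2}}}{H}\sum_{h=1}^{H-1}\frac{1}{d(p h a_p, \Z)}.
\end{equation*}

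The main obstacle is controlling the average of $1/d(p h a_p, \Z)$ in terms of $d(a_p, \Z)$ alone. I would handle this by splitting the range of $h$ into a ``good'' set, where $d(p h a_p, \Z) \gtrsim d(a_p, \Z)$ and the inductive bound is directly usable, and a ``bad'' set, which has small density and can be absorbed by a trivial bound plus the $1/H$ term. This gives an estimate of the form
\begin{equation*}
\frac{1}{N^2}\left|\sum e(f(n))\right|^2 \;\ll\; \frac{1}{H} + \frac{H \cdot N^{-4^{-p+2}}}{d(a_p, \Z)^{O(1)}},
\end{equation*}
and optimizing $H \sim N^{4^{-p+1} \cdot 2}$ balances the two terms so that both sides are comparable to $N^{-2 \cdot 4^{-p+1}}/d(a_p,\Z)^{O(1)}$. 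Taking square roots delivers the desired $N^{-4^{-p+1}}/d(a_p, \Z)$ (with implicit constants depending on $p$ and the lower-order coefficients of $f$, which is harmless in the intended application to $f(n) = \xi n^p$). The hardest step is the density/bad-set argument controlling $\sum_h 1/d(p h a_p, \Z)$; everything else is bookkeeping around the van der Corput recursion.
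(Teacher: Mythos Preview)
Your approach---induction on the degree via van der Corput's differencing \eqref{Corputexp}---is exactly the paper's. You are also right to flag the control of $\sum_h 1/d(ph\,a_p,\Z)$ as the crux: at precisely this point the paper simply asserts
\[
\frac{1}{d((p{+}1)h\,a_p,\Z)} \;\ll\; \frac{h}{d(a_p,\Z)}
\]
(with $f$ of degree $p+1$ in its indexing) and then chooses $H$ to balance the resulting terms, just as you do. That inequality is not true in general---it fails outright whenever $(p{+}1)h\,a_p\in\Z$---so the paper's argument has the very gap you anticipated.

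However, your proposed good/bad-set fix cannot rescue the statement as written, because the statement itself is false for arbitrary $a_p\notin\Z$. Take $p=2$ and $f(x)=x^2/3$: then $e(f(n))$ is $3$-periodic with period sum $1+2e(1/3)=i\sqrt{3}$, so $\tfrac{1}{N}\bigl|\sum_{n<N}e(n^2/3)\bigr|\to 1/\sqrt{3}$, contradicting any bound of the form $C\,N^{-1/4}$. The classical Weyl inequality gives the correct formulation: the right-hand side must involve a rational approximation $a/q$ to $p!\,a_p$, and only under a Diophantine hypothesis (as in the paper's intended application to $\xi=\langle\textbf{k},\alpha\rangle$ with $\alpha$ Diophantine) does this collapse to a clean power saving. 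So your instinct that this step hides real difficulty is correct, but the obstacle is not mere bookkeeping: an extra hypothesis on $a_p$ is genuinely required, and no density argument over $h$ alone can supply it.
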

\begin{proof}
The proof is a simple induction argument using Lemma \ref{Corputlem}.

First note that, for $p=1$, the sum in \eqref{lincorclaim2} is a geometric sum, hence the claim holds. Now suppose the claim is true for some $p\geq 1$ and let $f$ be a polynomial of degree $p+1$ as in the statement. Using Lemma \eqref{Corputlem} with $u_n=e(f(n))$ and dividing by $N^2H^2$ we obtain
\begin{equation*}
\begin{aligned}
\frac{1}{N^2} &\left| \sum_{n=0}^{N-1} e(f(n)) \right|^2\\
&\leq \frac{N+H-1}{NH}+ 2\sum_{h=1}^{H-1} \frac{(N+H-1)(H-h)(N-h)}{N^2H^2} \left| \frac{1}{N-h} \sum_{n=0}^{N-h-1} e(g_h(n)) \right|\\
& \ll \frac{1}{H} + \frac{1}{H} \sum_{h=1}^{H-1} \left| \frac{1}{N-h+1} \sum_{n=0}^{N-h-1} e(g_h(n)) \right|,
\end{aligned}
\end{equation*}
where $g_h(x)=f(x)-f(x+h)$. Since $g_h$ is a polynomial of degree $p$ and leading coefficient $(p+1)h a_p\not \in \Z$, hence by induction hypotheses
\begin{equation*}
\left| \frac{1}{N-h+1} \sum_{n=0}^{N-h-1} e(g_h(n)) \right| \ll \frac{1}{d((p+1)h a_p,\Z)} N^{4^{-p+1}} \ll \frac{h}{d(a_p,\Z)} N^{4^{-p+1}}.
\end{equation*}
For $H=N^{-2\cdot 4^{-p}}$ we obtain
\begin{align*}
\frac{1}{N^2} &\left| \sum_{n=0}^{N-1} e(f(n)) \right|^2 \ll \frac{1}{d(a_p,\Z)}N^{-2\cdot 4^{-p}}
\end{align*}
and the claim follows.
\end{proof}

\begin{corollary}\label{lincor}
For $p\in (0,\infty) \setminus \N$ and $\xi\in \R\setminus \{0\}$ it holds that 
\begin{equation}\label{lincorclaim1}
\left|\frac{1}{N} \sum_{n=0}^{N-1} e(\xi n^p) \right|^{2\lceil p \rceil} \ll \frac{1}{|\xi|} N^{\lceil p \rceil-p-1}+|\xi| N^{p-\lceil p \rceil} \;\;\; N\geq 1.
\end{equation}
Furthermore if $p\in \N$ and $\xi \in \R\setminus \Z$ then
\begin{equation}\label{lincorclaim2}
\left|\frac{1}{N} \sum_{n=0}^{N-1} e(\xi n^p) \right| \ll \frac{1}{d(\xi , \Z)} N^{-4^{-p+1}}  \;\;\; N\geq 1.
\end{equation}
\end{corollary}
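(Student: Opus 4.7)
The plan is to apply the two preceding propositions directly to $f(x) = \xi x^p$, splitting on whether $p$ is an integer.

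For part \eqref{lincorclaim2} (the integer case), I would simply invoke \eqref{CorputN}: taking $f(x) = \xi x^p$ gives a real polynomial of degree $p$ with leading coefficient $\xi$, and since $\xi \in \R \setminus \Z$ implies $d(\xi, \Z) > 0$, the inequality \eqref{CorputN} is exactly the claim, with no extra work needed.

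For part \eqref{lincorclaim1} (the non-integer case), the plan is to apply \eqref{Corputfk} with $k = \lceil p \rceil + 1$, chosen so that $2(k-1) = 2\lceil p \rceil$ matches the exponent on the left-hand side. I would first record the explicit derivative formula
\begin{equation*}
f^{(j)}(x) = \xi \Bigl(\prod_{i=0}^{j-1}(p-i)\Bigr) x^{p-j},
\end{equation*}
and observe that because $p \notin \N$ none of the factors $p-i$ vanish, so the coefficients in front of $f^{(k-1)}$ and $f^{(k)}$ are nonzero. Moreover, the exponents $p - (k-1) = p - \lceil p \rceil$ and $p - k = p - \lceil p \rceil - 1$ are both strictly negative, so $f^{(k-1)}$ and $f^{(k)}$ are strictly monotone on $(0, \infty)$, which is the hypothesis required by \eqref{Corputfk}. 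Substituting $|f^{(k-1)}(N)| \sim |\xi|\, N^{p - \lceil p \rceil}$ and $1/|f^{(k)}(N)| \sim |\xi|^{-1}\, N^{\lceil p \rceil + 1 - p}$ into the right-hand side of \eqref{Corputfk} and dividing by $N$ as prescribed produces the two terms in the statement.

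The main (mild) obstacle is exponent bookkeeping: one must choose $k$ precisely so that the left-hand-side exponent is correct and simultaneously so that the strictly negative power of $N$ in $1/|f^{(k)}(N)|$ yields a meaningful bound. The non-integrality assumption $p \notin \N$ enters essentially to guarantee that no derivative vanishes identically; without it, some iterate $f^{(j)}$ would eventually become constant and this method would break down. This is precisely why the two regimes must be handled via two different preceding propositions.
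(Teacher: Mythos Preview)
Your approach is the same as the paper's --- plug $f(x)=\xi x^p$ into the two preceding propositions --- and for the integer case \eqref{lincorclaim2} your proposal is correct and identical to the paper's.

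For the non-integer case \eqref{lincorclaim1}, however, your choice $k=\lceil p\rceil+1$ does \emph{not} produce the stated bound. With that $k$ one indeed has $|f^{(k-1)}(N)|\sim|\xi|\,N^{p-\lceil p\rceil}$ and $1/|f^{(k)}(N)|\sim|\xi|^{-1}N^{\lceil p\rceil+1-p}$, but after the $1/N$ in \eqref{Corputfk} the right-hand side becomes
\[
\frac{1}{|\xi|}\,N^{\lceil p\rceil-p}\;+\;|\xi|\,N^{p-\lceil p\rceil-1}.
\]
Since $\lceil p\rceil-p\in(0,1)$, the first term \emph{grows} with $N$, so this does not give \eqref{lincorclaim1} (nor any decay at all). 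Your sentence ``produces the two terms in the statement'' is where the bookkeeping slipped.

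The paper takes $k=\lceil p\rceil$ instead. Then $f^{(k)}(N)\sim\xi N^{p-\lceil p\rceil}$ and $f^{(k-1)}(N)\sim\xi N^{p-\lceil p\rceil+1}$, and the right-hand side of \eqref{Corputfk} becomes exactly $\frac{1}{|\xi|}N^{\lceil p\rceil-p-1}+|\xi|N^{p-\lceil p\rceil}$, matching \eqref{lincorclaim1}. The left-hand side then carries exponent $2(\lceil p\rceil-1)$ rather than $2\lceil p\rceil$, but since $\bigl|\frac{1}{N}\sum e(\xi n^p)\bigr|\le 1$, raising to the larger power $2\lceil p\rceil$ only shrinks the left-hand side, so \eqref{lincorclaim1} follows a fortiori. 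In short: pick $k$ to match the \emph{right-hand} side, not the left.
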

\begin{proof}
The prior claim \eqref{lincorclaim1} follows from setting $f(x)=\xi x^p$ and $k=\lceil p \rceil$ in \eqref{Corputfk}. The second claim \eqref{lincorclaim2} follows from \eqref{CorputN} with the same choice of $f$.

\end{proof}

\end{document}